\newcommand{\R}{\mathbb{R}}
\newcommand{\N}{\mathbb{N}}
\newcommand{\Q}{\mathbb{Q}}
\newcommand{\Z}{\mathbb{Z}}
\newcommand{\T}{\mathbb{T}}
\newcommand{\Ob}{\Omega_{\psi_{\beta}}}
\newcommand{\pb}{\psi_{\beta}}
\newcommand{\tpb}{\tilde{\psi}_{\beta}}
\newcommand{\Pb}{\Psi_{\beta}}
\newcommand{\larr}{\left( \begin{array}{c}}
\newcommand{\rarr}{\end{array} \right) }
\newcommand{\lsqarr}{\left[ \begin{array}{c}}
\newcommand{\rsqarr}{\end{array} \right]}
\newcommand{\inv}{\varprojlim}
\newtheorem{theorem}{Theorem}
\newtheorem{Theorem}[theorem]{Theorem}
\newtheorem{cor}[theorem]{Corollary}
\newtheorem{corollary}[theorem]{Corollary}
\newtheorem{conjecture}[theorem]{Conjecture}
\newtheorem{lemma}[theorem]{Lemma}
\newtheorem{prop}[theorem]{Proposition}
\newtheorem{example}[theorem]{Example}
\newtheorem{remark}[theorem]{Remark}
\numberwithin{equation}{section}
\begin{document}

\title[The Pisot Conjecture]{The Pisot Conjecture for $\beta$-substitutions}

\author[M. Barge]{Marcy Barge}
\address{Department of Mathematics\\
Montana State University\\
Bozeman, MT 59717-0240, USA.}
\email{barge@math.montana.edu}

\keywords{Pisot substitution, tiling space, $\beta$-numeration, hyperbolic automorphism}
\subjclass[2000]{Primary 37B50, 37B10, 11K16, 37P99, 37D40}
\date{May15, 2015}

\maketitle

\begin{abstract} We prove the Pisot Conjecture for $\beta$-substitutions: If $\beta$ is a Pisot number then the tiling dynamical system $(\Ob,\R)$ associated with the $\beta$-substitution has pure discrete spectrum. As corollaries: (1) Aritmetical coding of the hyperbolic solenoidal automorphism associated with the companion matrix of the minimal polynomial of any Pisot number is a.e. one-to-one; and (2) All Pisot numbers are weakly finitary.
\end{abstract}

\section{ Introduction.}

A {\em substitution} is a map that takes letters of some finite alphabet to finite words in that alphabet. The space of all bi-infinite words that can arise from infinitely repeating a particular
substitution, equiped with the product topology and the shift operation, is called a {\em substitutive system}. The group $\R$ acts on the suspension (there is a natural choice of roof function) of a substitutive system, resulting in the {\em tiling dynamical system} associated with the substitution. One would like to know how close the tiling dynamical system is to being simply an action of $\R$ by translation on a compact abelian group. The {\em Pisot Substitution Conjectures} assert that, under various hypotheses, the tiling dynamical system should be an almost everywhere one-to-one extension of an action of $\R$ by translation on a finite dimensional torus or solenoid.

As long as the substitution is primitive (under some power of the substitution, every letter appears in the image of a single letter), long words are, on average, stretched under substitution by a constant multiplier called the {\em inflation} of the substitution. The one key hypothesis for the Pisot Substitution Conjectures is that the inflation be a Pisot-Vijayaraghavan (or, simply, Pisot) number - a real algebraic integer greater than 1, all of whose (other) algebraic conjugates lie strictly inside the unit circle. For the tiling dynamical system to have a nontrivial group rotation as a factor it is necessary that the inflation be Pisot (\cite{S2}). The tiling dynamical system of any primitive substitution with Pisot inflation (let's call such a substitution a {\em Pisot substitution}) is always a finite extension of a translation action on a torus or solenoid (\cite{BKw,BBK}) but maybe not an almost everywhere one-to-one extension of such an action. For example, the tiling dynamical system associated with the Thue-Morse substitution, $a\to ab$, $b\to ba$, with inflation $\beta=2$, is, at best, an a.e. two-to-one extension of translation on the 2-adic solenoid.

For tiling dynamical systems, equivalent to being an almost everywhere one-to-one extension of an action by rotation on a torus or solenoid is the property of having {\em pure discrete spectrum} (more on this below). There are two main versions of the Pisot Substitution Conjecture: 
\\
\\
(1) If $\beta$ is Pisot, the tiling dynamical system associated with the $\beta$-substitution has pure discrete spectrum.
\\
\\
(2) The tiling dynamical system of an {\em irreducible}\footnote{ A substitution is irreducible if the characteristic polynomial of its abelianization (or substitution matrix - see Section \ref{connections}) is irreducible over the rationals.} Pisot substitution has pure discrete spectrum.
\\
\\
Given a real number $\beta>1$, the {\em $\beta$-transformation}, $T_{\beta}:[0,1]\to [0,1]$, is given by $T_{\beta}(x)=\beta x-\lfloor \beta x\rfloor$, where $\lfloor\,\,\,\rfloor$ denotes the greatest integer function. If $\beta$ is Pisot, the orbit of 1 under $T_{\beta}$ is finite and so breaks $[0,1]$ into finitely many subintervals. These subintervals map over each other under $T_{\beta}$ in a pattern described by the {\em $\beta$-substitution}, $\pb$ (see Section \ref{background}); it is to this substitution that version (1) above refers. 

The two Pisot Substitution Conjectures are independent: most irreducible Pisot substitutions don't  come from the $\beta$-transformation, and most $\beta$-substitutions are not irreducible. We prove version (1) of the Pisot Substitution Conjecture in the present article.

 In \cite{Ra} G. Rauzy constructed 2-dimensional fractal tiles, called {\em Rauzy tiles}, that tile the plane both periodically and non-periodically. His construction is based on the `tribonacci substitution' - the $\beta$-substitution associated with the largest root of $x^3-x^2-x-1$. Using arithmetical properties of $\beta$-numeration, Thurston (\cite{T}) described a general method for obtaining a self-similar `shingling' of the plane based on any degree three Pisot unit $\beta$. His shingling is a multi-tiling, `Galois dual' to the tilings of $\R$ provided directly by the $\beta$-substitution, in which almost every point in the plane lies in exactly $r$ tiles for some $r\ge1$. The degree, $r$, of the shingling is 1 if and only if the shingling is a tiling, and, it turns out, if and only if the tiling dynamical system associated with the $\beta$-substitution has pure discrete spectrum (\cite{A2}). Thurston mentions that when these shinglings are, in fact, tilings, they are associated with Markov partitions for hyperbolic toral automorphisms. This procedure for constructing Markov partitions was subsequently developed by Bertrand-Mathis (\cite{B-M}) and Pragastis (\cite{Pr}) and underlies the `arithmetical coding' of Kenyon, Vershik, Sidorov, Schmidt, and others (\cite{KV, Si1,Si2,Sch2,Ver1,Ver2,Bor} and see Section \ref{coding and W} below). 
 
 Solomyak showed in \cite{S1} that a purely arithmetical condition  - the so-called Property (F),  that every positve element of the ring $\Z[1/\beta]$ have finite $\beta$-expansion - is enough to guarantee pure discrete spectrum of the tiling dynamical system associated with the $\beta$-substitution and, with Frougny (\cite{FS}), described a collection of polynomials with dominant root a Pisot number satifying Property (F). Akiyama and Sadahiro (\cite{AS}) and Akiyama (\cite{A1}) showed that Thurston's shingling produces a tiling when the Pisot unit $\beta$ satisfies Property (F). In \cite{H} Hollander introduced the weaker Property (W) (see Section \ref{coding and W} below), which Akiyama (\cite{A2}) subsequently proved to be equivalent to pure discrete spectrum for the 1-d tiling system (and to the Galois dual shingling being a tiling). In \cite{Si1}, Sidorov proves that, for Pisot units, Property (W) is equivalent to arithmetical coding (with fundamental homoclinic point) being a.e. 1-1.

 Interest in nonperiodic tilings and spectral properties of substitutions was stimulated by Schectman's discovery, in the early 80's, of quasicrystalline materials. Self-similar point sets that have good diffractive properties, and so provide mathematical models of quasicrystals, can be created with substitutions of Pisot type: if one places an `atom' at a characteristic position in each tile of a Pisot substitution tiling (of whatever dimension) the diffraction of the resulting arrangement will have Bragg peaks. In fact, the diffraction spectrum of the arrangement will be pure point (as for a perfect quasicrystal) if and only if the tiling dynamical system has pure discrete spectrum, meaning that the eigenfunctions of the 
dynamical system span the space $L^2$ of square-integrable functions on the tiling space (\cite{D},\cite{LMS}, and see \cite{Le} for a survey). For substitution tiling dynamical systems, it is a consequence of the Halmos- von Neumann theory (and Solomyak's result that eigenfunctions are continuous (\cite{S4})) that pure discrete spectrum of the tiling dynamical system is equivalent to the system being an almost everywhere one-to-one extension of a
translation action on a torus or solenoid (see \cite{BK} for more detail and related characterizations). 

For more on Pisot substitutions and the Pisot Substitution Conjectures, see the surveys \cite{BS} and \cite{ABBLS}. A general introduction to the dynamical and topological properties of substitution tiling spaces can be found in \cite{S2}, \cite{Ro}, and  \cite{AP}. The direct antecedent to the current article is \cite{B1} in which the author dealt with $\beta$-substitutions for $\beta$ a Pisot simple Parry number. As outlined below, we extend the approach taken there to all Pisot numbers. In the summary that follows, $\theta$ is an arbitrary (primitive, nonperiodic) substitution with Pisot inflation $\beta$, $\pb$ is the $\beta$-substitution, and $\Theta$ denotes the substitution-induced homeomorphism of the substitution tiling space $\Omega_{\theta}$. Here are the key steps in our approach:

\begin{enumerate}

\item There is an almost everywhere $cr$-to-1 map $\pi_{max}:\Omega_{\theta}\to \hat{\T}^d$ that factors the $\R$-action on 
$\Omega_{\theta}$ onto a Kronecker action on a $d$-dimensional solenoid $\hat{\T}^d$. Here $d$ is the algebraic degree of $\beta$ and $cr=cr(\theta)<\infty$ is the coincidence rank of the substitution $\theta$.  \footnote{The coincidence rank is equal to the degree of the `Galois dual' shingling, or multi-tiling, of $\R^{d-1}$ mentioned previously.}

\item The system $(\Omega_{\theta},\R)$ has pure discrete spectrum if and only if $cr(\theta)=1$.

\item The structure relation for $\pi_{max}$ is strong regional proximality ($\sim_{srp}$): For $T,T'\in\Omega_{\theta}$, $\pi_{max}(T)=\pi_{max}(T')$ if and only if $T\sim_{srp}T'$. Furthermore, there are $T_1,\ldots,T_r\in\Omega_{\theta}$ with $T_i\sim_{srp}T_j$ and $T_i\cap T_j=\emptyset$ for $i\ne j$ if and only if $r\le cr(\theta)$.

\item The relation $\sim_s$, defined on $\Omega_{\theta}$ by $T\sim_s T'$ if $d(\Theta^k(T),\Theta^k(T'))\to0$ as $k\to\infty$ is open in the sense that $T\sim_s T'\implies T-t\sim_s T'-t$ for $|t|$ small. 

\item If $\tau$ and $\tau'$ are tiles, let $\tau\sim_s\tau'$ mean 
$T\sim_s T'$ for all $T,T'\in\Omega_{\theta}$ with $\tau\in T, \tau'\in T'$. Then $cr(\theta)=1$ if and only if there is a tile $\tau$ (equivalently, for all tiles $\tau$) and there are $t_i\ne0$ with $t_i\to 0$ as $i\to\infty$ so that, for each $i$, $\tau-t_i-t\sim_s \tau-t$ for a dense set of $t\in spt(\tau)\cap spt(\tau-t_i)$.

\item The stable equivalence relation ($\approx_s$) is defined on a tiling space $\Omega_{\theta}$ by $T\approx_s T'$ if and only if $T\sim_{srp} T'$ and $T-t\sim_s T'-t$ for a set of $t$ dense in $\R$. Then $cr(\theta)=1$ if and only if $\approx_s=\sim_{srp}$. If $cr(\theta)>1$, then $(\Omega_{\theta}/\approx_s)\simeq\Omega_{\tilde{\theta}}$ is a substitution tiling space for a substitution $\tilde{\theta}$, $cr(\tilde{\theta})=cr(\theta)$, and $\approx_s$ is trivial on $\Omega_{\tilde{\theta}}$.

\item There is some leeway in choosing a substitution to generate an isomorphism class of tiling spaces. If $cr(\pb)>1$, the substitution $\tpb$ may be chosen to have the special property: there is an $n\in\N$ and a letter $b$ in the alphabet
$\mathcal{A}_{\tpb}$ for $\tpb$ so that $\tpb^n(a)=b\cdots a$ for all $a\in \mathcal{A}_{\tpb}$.

\item If $\theta$ is any substitution for which there are $n\in\N$ and $b\in\mathcal{A}_{\theta}$ with $\theta^n(a)=b\cdots a$ for all $a\in\mathcal{A}_{\theta}$, then $\approx_s$ is nontrivial on $\Omega_{\theta}$.

\end{enumerate}

Pure discrete spectrum for $(\Ob,\R)$ then follows: If the spectrum were not pure discrete, the substitution $\tpb$ would be such that $\approx_s$ is trivial on $\Omega_{\tpb}$ (by (6)). But this is contradicted by (7) and (8). Versions of items (1)-(6) hold quite generally for substitutions of Pisot type in any dimension. For (1)-(3) see \cite{BKw,BBK,BK,B2}, (4) comes from \cite{BO}, a version of (5) first appears in \cite{BBK} and in the form stated here in \cite{B1} with a proof derived from \cite{BSW}, and (6) is from \cite{B1}. Item (8) is from \cite{B1}.
Section \ref{proof} is devoted to proving item (7) for the particular case of $\beta$-substitutions. The argument is somewhat technical and is based on the topological structure of $\Ob$, as we describe now. 

There are tilings in $\Ob$, corresponding to the fixed point 0 of $T_{\beta}$ and to the point of the periodic orbit, of period $p$, of $T_{\beta}$ on which the $T_{\beta}$-orbit of 1 eventually lands. These tilings are periodic under $\Pb$ of period $p$ and pairwise strong regionally proximal. Using the basic technique of \cite{BD2}, and the special `monotonic' nature of $\pb$, we show that these tilings are entangled: for $T,T'$ any two of them $T-t\sim_s T'-t$ for a set of $t$ dense in $\R^+$. But also, for such $T\ne T'$, we show that $T$ and $T'$ are not asymptotic on $\R^+$ (there are arbitrarily large $t$ so that the tiles of $T$ and $T'$ at $t$ are distinct). From this it follows (again using the monotonicity of $\pb$) that there is $\epsilon>0$ so that if $\tau$ and $\tau'$ are any two tiles having the same initial vertex $x$, then $\tau-t\sim_s \tau'-t$ for a set of $t$ dense in $[x,x+\epsilon)$, and there are tiles $\tau^1_-,\ldots,\tau^p_-$ so that if $\tau$ is any tile, there is exactly one of the $\tau^i_-$ so that, with $t_0$ chosen such that the terminal vertices of $\tau-t_0$ and $\tau^i_-$ are both equal to $y$,  $\tau-t_0-t\sim_s\tau^i_--t$ for a set of $t$ dense in $(y-\epsilon,y]$.
Item (7) then follows.

As mentioned earlier, Akiyama (in \cite{A2}) established the equivalence of the arithmetical Property (W) with pure discrete spectrum of $(\Ob,\R)$, and Sidorov (in \cite{Si1}) proved the equivalence of Property (W) with a.e.1-1-ness of arithmetical coding (at least for Pisot units). In the final Section \ref{connections} we reprove these results (and extend the Sidorov result to non-units) from a unified point of view. We think it is of interest to note that arithmetical coding occurs in the context of hyperbolic dynamics, while the map onto the maximal equicontinuous factor is determined by properties of the translation action. We will tie these two viewpoints together by showing the structure relation for the maximal equicontinuous factor map (strong regional proximality) is the same as the `global shadowing' relation defined in terms of the the hyperbolic homeomorphism $\Pb$ (such an equivalence holds also in higher dimensions - see \cite{BG}). 

Finally, Property (W) will be interpreted as a statement about homoclinic return times. The relation between the structure of these return times and pure discreteness of the translation action is a general phenomenon (see, for example, Corollary 4.5 of \cite{B2}).

\section{ Background on $\beta$-numeration and substitution tilings.}\label{background}
For $\beta>1$, the $\beta$-transformation, $T_{\beta}:[0,1]\to[0,1]$ is given by 
$$T_{\beta}(x):=\beta x-\lfloor\beta x\rfloor,$$
where $\lfloor \,\rfloor$ denotes the greatest integer function. If $\beta$ is a Pisot number, then the $T_{\beta}$-orbit of 1, $\mathcal{O}_{T_{\beta}}(1):=\{T_{\beta}^n(1):n\ge0\}$, is finite (\cite{B-M},\cite{Sch1}). Such a $\beta$ is a {\em simple Parry number} if $0\in\mathcal{O}_{T_{\beta}}(1)$ and otherwise is a {\em non-simple Parry number}. Every $x\in [0,1)$ has an {\em itenerary}
$$\kappa(x)=x_1x_2\cdots,$$
with $x_n:=i\in\{0,\ldots,\lfloor\beta\rfloor\}$ provided $T_{\beta}^{n-1}(x)\in [i/\beta,(i+1)/\beta)$. For such $x$ 
$$x=\sum_{n=1}^{\infty}x_n\beta^{-n}$$
is the ({\em greedy}) $\beta$-expansion of $x$.
The {\em kneading invariant} or {\em characteristic sequence} of $\beta$ is given by 
$$\kappa(1)=c_1c_2\cdots:=\lim_{x\nearrow1}\kappa(x),$$
the limit being taken in the product topology on the sequences. If $\beta$ is a simple Parry number, then $\kappa(1)$ is periodic of some (least) period $p$: $\kappa(1)=\overline{c_1\cdots c_p}$. If $\beta$ is a non-simple Parry number, then 
$\kappa(1)$ is strictly preperiodic: there are $m\ge1$ and $p\ge 1$ with $\kappa(1)=c_1\cdots c_m\overline{c_{m+1}\cdots c_{m+p}}$, where we take $m$, and then $p$, as small as possible.

The {\em one-sided $\beta$-shift} is the Cantor dynamical system
$$(X_{\beta}^+,\sigma),$$
with $X_{\beta^+}:=cl(\{(x_1,x_2,\ldots):x_1x_2\cdots=\kappa(x) \text{ for some }x\in [0,1]\})$ and $\sigma((x_1,x_2,  x_3,\ldots)):=(x_2,x_3,\ldots)$. One has that  $X_{\beta}$ consists precisely of the sequences of digits from $\{0,\ldots,\lfloor\beta\rfloor\}$ all of whose shifts are lexicographically no larger than the sequence $(c_1,c_2,\ldots)$ corresponding to $\kappa(1)$ (\cite{Pa}). The (two-sided) {\em $\beta$-shift} is obtained by taking inverse limits: $X_{\beta}:=\{(\ldots,x_{-1},x_0,x_1,\ldots):(x_k,x_{k+1},\ldots)\in X_{\beta}^+ \text{ for all }k\in\Z\}$. For each $x\in\R^+$ there is a corresponding point
$$\underline{x}:=(\ldots,0,0,x_{-k},\ldots,x_{-1}x_0,x_1,\ldots)\in X_{\beta}$$
where $x=\sum_{i=-k}^{\infty}x_i\beta^{-i}$, and $\sum_{i=1}^{\infty} x_{-k+i}\beta^{-i}$ is the greedy $\beta$-expansion of $\beta^{-k}x\in [0,1)$. For more detail on the $\beta$-shift, see \cite{R, Pa, IT, Bl, LM}.

For a given Pisot $\beta$, let $\{0,1\}\cup\{T_{\beta}^i(1):i=1,\ldots,m+p\}=\{0=s_0<s_1<\cdots<s_{m+p}=1\}$. The {\em prototiles} are the marked intervals $\tau_i:=[s_{i-1},s_i]\times\{i\}$, $i=1,\ldots,m+p$; their  {\em vertices} are $min(\tau_i):=s_{i-1}$ and $max(\tau_i)=s_i$. The {\em  support} of $\tau_i$ is $spt(\tau_i):=[s_{i-1},s_i]$.
We call $\tau=\tau_i+t:=(spt(\tau_i)+t)\times\{i\}$  a tile of {\em type} $i$ with support $spt(\tau_i)+t$ and vertices $min(\tau):=s_{i-1}+t$ and $max(\tau):=s_i+t$. The {\em $\beta$-substitution} on the alphabet $\mathcal{A}=\{1,\ldots,m+p\}$ is the map 
$\pb:\mathcal{A}\to\mathcal{A}^*$, $\mathcal{A}^*$ being the collection of finite nonempty words on $\mathcal{A}$, given by $\pb(i)=i_1i_2\cdots i_n$ provided
$T_{\beta}(x)$ passes through the support of $\tau_{i_1}$, then the support of $\tau_{i_2}$,..., then the support of $\tau_{i_n}$ as $x$ increases from $s_{i-1}$ to $s_i$. \footnote{Typically, the prototiles are taken to be $[0,s_i]\times\{i\}$, resulting in a different `$\beta$-substitution'. It is straightforward to pass from one to the other by a rewriting procedure (see \cite{D} or \cite{BD1}) and the resulting tiling dynamical systems are isomorphic.} The corresponding {\em tile substitution} $\Pb$ is defined on prototiles by
$\Pb(\tau_i):=\{\tau_{i_1}-min(\tau_{i_1})+\beta min(\tau_i), \tau_{i_2}-min(\tau_{i_2})+max(\tau_{i_1})-min(\tau{i_1})+\beta min(\tau_i),\ldots,\tau_{i_n}-min(\tau_{i_n})+(\sum_{j=1}^{n-1}((max(\tau_j)-min(\tau_j)))+\beta min(\tau_i)\}$. (That just says that $\Pb$ multiplies the support of $\tau_i$ by a factor of $\beta$ and tiles the resulting interval by the $\tau_{i_j}$, following the pattern of $\pb$.) The tile substitution $\Pb$ is then defined on arbitrary tiles by $\Pb(\tau_i+t):=\Pb(\tau_i)+\beta t$. By a {\em patch} $P$ we will mean a finite collection of tiles with the properties: if $\tau\ne\tau'\in P$, then $int(spt(\tau))\cap int(spt(\tau'))=\emptyset$; and $spt(P):=\cup_{\tau\in P}spt(\tau)$ is connected. $\Pb$ extends to patches by $\Pb(P):=\cup_{\tau\in P}\Pb(\tau)$.

A {\em tiling} $T$ of $\R$ is a collection of tiles with the properties: if $\tau\ne\tau'\in T$ then $int(spt(\tau))\cap int(spt(\tau'))=\emptyset$ ; and $\cup_{\tau\in T}spt(\tau)=\R$. A patch $P$ is {\em allowed} for $\Pb$ if there is a tile $\tau=\tau_i+t$ and an $n\in\N$ so that $P\subset \Pb^n(\tau)$. For $\beta\notin\N$, the {\em tiling space} associated with $\Pb$ is the collection of tilings of $\R$, all of whose patches are allowed for $\Pb$:
$$\Ob:=\{T:\text{ every patch } P\subset T \text{ is allowed for }\Pb\}.$$

For $r\ge0$ and $T\in\Ob$, let $$B_r[T]:=\{\tau\in T:spt(\tau)\cap [-r,r]\ne\emptyset\}.$$
So, for example, $B_0[T]$ is the collection of tiles of $T$ whose supports contain the origin.

The group $\R$ acts on $\Ob$ by translation, $T-t:=\{\tau-t:\tau\in T\}$, and there is a metric $d$ (called the {\em tiling metric}) on $\Ob$ with the property that $d(T,T' )$ is small if there are $r,t,t'$ with $r$ large and $|t|,|t'|$ small, so that $B_r[T-t]=B_r[T'-t']$ (see, for example, \cite{AP}). With the topology induced by $d$, $\Ob$ is compact and the translation action on $\Ob$ is continuous. It is well-known that if $\beta\notin\N$, then $\pb$ is {\em non-periodic} (meaning $T-t=T$ for some $T\in\Ob$ implies $t=0$) and {\em primitive} (there are $n$ and $i$ so that all letters occur in $\pb^n(i)$) and it follows that the translation dynamical system $(\Ob,\R)$ is minimal and uniquely ergodic (\cite{S2}). We will denote the unique Borel translation invariant measure on $\Ob$ by $\mu$. Moreover, $\Pb:\Ob\to\Ob$ by $\Pb(T):=\cup_{\tau\in T}\Pb(\tau)$ is a homeomorphism with $\mu$ invariant and ergodic (\cite{S3}). 
Note that the translation- and $\Pb$- dynamics are intertwined by:
$$\Pb(T-t)=\Pb(T)-\beta t.$$

If $\beta=n$ is an integer, the substitution $\pb$ is the periodic substitution $1\mapsto 1^n$ and, as usually defined, the tiling space is the circle $\T^1$ and the $\Psi_n$ is the $n$-fold covering map $x+\Z\mapsto nx+\Z$. It will be convenient
(looking ahead to the discussion of arithmetical coding) to instead define $\Omega_{\psi_n}$ to be the solenoid $\inv \Psi_n$, interpret $\Psi_n$ to be the shift homeomorphism, and take the $\R$-action to be the natural Kronecker action $(x_1+\Z,x_2+\Z,\ldots)-t:=(x_1-t+\Z,x_2-t/n+\Z,\ldots)$.

A continuous map $f:\Ob\to\T^1:=\R/\Z$ is a {\em continuous eigenfunction} of $(\Ob,\R)$ if there is $\gamma\in\R$ (the associated eigenvalue) so that $f(T-t)=f(T)-(\gamma t+\Z)$ for all $t\in\R$. The system $(\Ob,\R)$ is said to have {\em pure discrete spectrum} if the continuous eigenfunctions span $L^2(\mu)$. An alternative formulation of this property, in terms of equicontinuous factors, is closer to the spirit of this article. A continuous system $(Y,\R)$ (that is, a continuous action of $\R$ on the metric space $Y$) is {\em equicontinuous} provided the collecton of homeomorphisms $\{y\mapsto y\cdot t\}_{t\in\R}$ is equicontinuous. A {\em maximal equicontinuous factor} of a system $(X,\R)$ is an equicontinuous factor $(X_{max},\R)$ of $(X,\R)$ with the property that every equicontinuous factor of $(X,\R)$ is also a factor of $(X_{max},\R)$. Compact minimal systems always have a (unique up to isomorphism) maximal equicontinuous factor
(\cite{Aus}). In the case at hand, the maximal equicontinuous factor of $(\Ob,\R)$ is a Kronecker action on a $d$-dimensional torus or solenoid $\hat{\T}_{\beta}^d$, with $d$ the algebraic degree of $\beta$ (\cite{BBK},\cite{B2}). Here $\hat{\T}_{\beta}^d$ is the inverse limit $\hat{\T}_{\beta}^d:=\inv F_M$ with $M$ the companion matrix of the minimal polynomial of $\beta$ and $F_M:\T^d:=\R^d/\Z^d\to\T^d$ the hyperbolic toral endomorphism $F_M(v+\Z^d):=Mv+\Z^d$ and the Kronecker action is given by $(v_1+\Z^d,v_2+\Z^d,v_3+\Z^d,\ldots)-t:=(v_1-t\omega+\Z^d,v_2-\beta^{-1}t\omega,v_3-\beta^{-2}t\omega+\Z^d,\ldots)$, where $\omega$ is a right positive eigenvector for the eigenvalue $\beta$ of $M$.

The structure relation for the maximal equicontinuous factor map $\pi_{max}:\Ob\to\hat{\T}^d_{\beta}$ is given by strong regional proximality \footnote{In the general context of an abelian group acting minimally on a compact metric space, the structure relation for the maximal equicontinuous factor map is regional proximality (\cite{V}). For Pisot substitution tiling spaces, the Meyer property is responsible for the strong version we use here.} (\cite{BK}): Tilings $T,T'$ are {\em strong regionally proximal}, denoted $T\sim_{srp}T'$, provided for all $R>0$ there are $S_R,S'_R\in\Ob$ abd $t_R\in\R$ so that $$B_R[T]=B_r[S_R],$$
$$B_R[T']=B_r[S'_R],\text{ and}$$
$$B_R[S_R-t_R]=B_r[S'_R-t_R].$$

There is $r\in\N$, called the {\em coincidence rank} of $\pb$ so that $\pi_{max}:\Ob\to\hat{T}^d_{\beta}$ is a.e. $r$-to-1
and for each $z\in\hat{T}^d_{\beta}$ there are $S_1,\ldots,S_r\in\pi_{max}^{-1}(z)$ with $S_i\cap S_j=\emptyset$ for $i\ne j$. The system $(\Ob,\R)$ has pure discrete spectrum if and only if the coincidence rank of $\pb$ equals 1, that is, if and only if $\pi_{max}$ is a.e. 1-1 (see \cite{BKw, BBK, BK}).

\begin{conjecture} \label{Pisot Conjecture} (Pisot conjecture for $\beta$-substitutions) If $\beta$ is a Pisot number, then $(\Ob,\R)$ has pure discrete spectrum.
\end{conjecture}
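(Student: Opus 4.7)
The plan is to argue by contradiction, combining the ingredients (1)--(8) itemized in the introduction. Suppose $(\Ob,\R)$ does not have pure discrete spectrum. By (2), this is equivalent to $cr(\pb)>1$. By (6), the quotient $\Ob/\!\approx_s$ is then the tiling space $\Omega_{\tpb}$ of some substitution $\tpb$ with $cr(\tpb)=cr(\pb)>1$ and with $\approx_s$ trivial on $\Omega_{\tpb}$. Invoking item (7), which is what the body of the paper establishes in the $\beta$-substitution setting, I would then replace $\tpb$ by another generator of the same tiling-space isomorphism class having the additional property that for some $n\in\N$ and some letter $b\in\mathcal{A}_{\tpb}$, $\tpb^n(a)=b\cdots a$ for every $a\in\mathcal{A}_{\tpb}$. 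Item (8) then forces $\approx_s$ to be nontrivial on $\Omega_{\tpb}$, contradicting the previous line and establishing the conjecture.

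Since items (1)--(6) and (8) are already available from prior work, the actual content of the proof is item (7) for $\beta$-substitutions. For this I would follow the outline sketched by the author. First, identify the distinguished tilings $T_0,T_1,\ldots,T_{p-1}\in\Ob$ coming from the $T_\beta$-fixed point $0$ together with the $T_\beta$-periodic cycle of period $p$ into which the orbit of $1$ eventually falls. These tilings are periodic of period $p$ under $\Pb$, and since they share a common image in the maximal equicontinuous factor $\hat{\T}^d_{\beta}$ they are pairwise strong regionally proximal.

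Next, I would adapt the entanglement mechanism of \cite{BD2}, exploiting the \emph{monotonic} character of $\pb$ inherited from orientation-preservation of $T_\beta$ on each prototile support, to show that any two of these tilings $T\ne T'$ satisfy $T-t\sim_s T'-t$ for $t$ in a set dense in $\R^+$. At the same time, the kneading data $\kappa(1)=c_1\cdots c_m\overline{c_{m+1}\cdots c_{m+p}}$ force $T$ and $T'$ to be non-asymptotic on $\R^+$: there are arbitrarily large $t$ at which the tiles of $T-t$ and $T'-t$ over the origin disagree. Combining these two properties, and again using monotonicity of $\pb$ together with item (5), I would conclude that there is a uniform $\epsilon>0$ such that any two tiles $\tau,\tau'$ with the same initial vertex $x$ satisfy $\tau-t\sim_s\tau'-t$ for a set of $t$ dense in $[x,x+\epsilon)$, and that there are only finitely many ``left-terminal germs'' $\tau^1_-,\ldots,\tau^p_-$ to one of which every tile is stably asymptotic from the left. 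A suitable recoding of $\pb$ (or of the $\tpb$ produced by (6)) then exhibits the common left letter $b$ demanded by item (7).

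The main obstacle is the non-simple Parry case. When $0\in\mathcal{O}_{T_\beta}(1)$ (the simple Parry case) the argument essentially reduces to that of \cite{B1}; in general one must push the entanglement/monotonicity bookkeeping through both the preperiodic segment $c_1\cdots c_m$ and the cyclic tail $\overline{c_{m+1}\cdots c_{m+p}}$ of $\kappa(1)$. Verifying that the periodic tilings $T_0,\ldots,T_{p-1}$ are genuinely pairwise non-asymptotic on $\R^+$, rather than merely strongly regionally proximal, is the delicate point, and it is this that ultimately dictates both the choice of $n$ and the choice of the common left letter $b$ in the recoded substitution $\tpb$.
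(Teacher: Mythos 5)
Your proposal follows essentially the same route as the paper: the contradiction scheme via items (2), (6), (7), (8), with the real content being item (7) for $\beta$-substitutions, established through the entanglement of the $\Pb$-periodic tilings at $0$ and at the periodic tail of the orbit of $1$ (Lemmas \ref{stably equiv 1} and \ref{stably equiv 2}), their non-asymptoticity on $\R^+$ (Proposition \ref{not asymptotic}), and the resulting eventual constancy of the initial-letter map and injectivity of the terminal-letter map for the quotient substitution (Lemmas \ref{initial stability} and \ref{eventually}). You also correctly single out the non-asymptoticity of the periodic tilings in the non-simple Parry case as the delicate technical point, which is exactly where the paper's effort is concentrated.
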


\section{ Arithmetical coding and Property (W)}\label{coding and W}

Given a Pisot number $\beta$ of algebraic degree $d$ and $M$ the companion matrix of its minimal polynomial, let $E^u$ and $E^s$ denote the 1-dimensional and $(d-1)$-dimensional unstable and stable spaces of $M$. (So $E^u$ is spanned by the positive right eigenvector $\omega$ and $E^s=\{v\in\R^d:M^nv\to0 \text{ as }n\to\infty\}$.) Then $\R^d=E^u\oplus E^s$ and for $y\in\R^d$ we write $y=y^u+y^s$ with $y^u\in E^u$ and $y^s\in E^s$. A {homoclinic point} $\bar{y}=y+\Z^d\in\T^d$, by which we mean $y=z^u$ for some $z\in\Z^d$, is {\em fundamental} if $\{z,Mz,\ldots,M^{d-1}z\}$ is a basis for $\Z^d$. (Such a point always exists.) If $\bar{y}$ is a fundamental homoclinic point, let $\hat{\bar{y}}:=(y+\Z^d, M^{-1}y+\Z^d,\ldots)\in\hat{\T}^d_{\beta}$ be the corresponding point, homoclinic to $\hat{\bar{0}}$ under the shift automorphism $\hat{F}_M$. The map $$h_{\bar{y}}: X_{\beta}\to\hat{\T}^d_{\beta}$$
given by $$h_{\bar{y}}((x_i)):=\sum_{i\in\Z}x_i\hat{F}_M^{-i}(\hat{\bar{y}})$$ 
is called an {\em aritmetical coding} of  the hyperbolic automorphism $\hat{F}_M:\hat{\T}^d_{\beta}\to\hat{\T}^d_{\beta}$.
Such a coding has the felicitous properties: 
$$h_{\bar{y}}(\underline{\beta x})=\hat{F}_M(h_{\bar{y}}(\underline{x}))$$
for all $x\in\R^+$, more generally
$$h_{\bar{y}}\circ \sigma=\hat{F}_M\circ h_{\bar{y}},$$ and $$h_{\bar{y}}(\underline{x+x'})=h_{\bar{y}}(\underline{x})+h_{\bar{y}}(\underline{x'})$$
for all $x,x'\in\R^+$. 


The point $x\in \R^+$ is said to have {\em finite $\beta$-expansion} if $\underline{x}=(\ldots,0,x_{-k},\ldots,x_n,0,0,\ldots)$ for some $k\in\N$. Let $Fin(\beta):=\{x\in\R^+: x\text{ has finite }\beta\text{-expansion}\}$. The Pisot number $\beta$ is said to be {\em finitary} if $\Z[1/\beta]\cap\R^+\subset Fin(\beta)$ and is said to be {\em weakly finitary} if for all $z\in\Z[1/\beta]\cap\R^+$ there are $x,y\in Fin(\beta)$, with $y$ as small as desired, such that $z=x-y$. The so-called {\em Property (W)} is that $\beta$ is weakly finitary.


Akiyama proves in \cite{A1} that $\beta$ satisfies Property (W) if and ony if $(\Ob,\R)$ has pure discrete spectrum and Sidorov (\cite{Si1}) shows that, at least for Pisot $\beta$ that are algebraic units, Property (W) is equivalent to arithmetical coding (with fundamental homoclinic point) being a.e. 1-1. It is shown in \cite{ARS} that all cubic Pisot units, and higher degree Pisot numbers satisfying a `dominant condition' are weakly finitary.  We prove Conjecture \ref{Pisot Conjecture} in the next section and provide an alternative argument for the equivalence of Conjecture \ref{Pisot Conjecture} with a.e. one-to-oneness of arithmetical coding for Pisot $\beta$ and satisfaction of Property (W) for all Pisot $\beta$ in the final section.
(In fact we will prove the stronger form of Property (W): If $\beta$ is Pisot and $z\in\Z[1/\beta]\cap\R^+$, then $\{y:y\in Fin(\beta) \text{ and }y+z\in Fin(\beta)\}$ is dense in $\R^+$.)

\section{ Proof of the Pisot Conjecture for $\beta$-substitutions.} \label{proof}

The {\em language} of $\pb$ is defined by $\mathcal{L}(\pb):=\{w\in\mathcal{A}^*: w \text{ is a factor of }\pb^n(i) \text{ for some }n\in\N, i\in\mathcal{A}\}$.
  
The following `monotonicity'  properties of $\mathcal{L}(\pb)$ will be used repeatedly. 
\\
\\
{\bf Property 1:} If $ab$ is a two letter word in $\mathcal{L}(\pb)$ and $b\in\{2,\ldots,m+p\}$, then $a=b-1$.
\\
\\
{\bf Property 2:} If $ab$ and $ac$ are two letter words in $\mathcal{L}(\pb)$ and $b\ne c$, then either $b=1$ or $c=1$. 
\\
\\
{\bf Property 3:} If $ac$ and $bc$ are two letter words in $\mathcal{L}(\pb)$ and $a\ne b$, then $c=1$. 
\\
\\
The first property is a simple consequence of the fact that $T_{\beta}$ increases monotonically from 0 on each of its maximal intervals of continuity. Properties 2 and 3 follow immediately from Property 1. Note that Property 1 implies that if $\tau_k+t\in T\in\Ob$, then $\{\tau_1,\tau_2,\ldots,\tau_k\}+t\subset T$.

We have labeled the prototiles according to the order of their occurance on the interval $[0,1]$. It will be convenient to also have labelings of the prototiles according to where, along the orbit of $1$ under $T_{\beta}$, their maximum
and minimum vertices lie: Let $z^i:=T_{\beta}^{i-1}(1)$ for $i=1,\ldots,m+p$; then if $max(\tau_i)=z^j$, define $\tau_-^j:=\tau_i$, and if $min(\tau_i)=z^j$ set $\tau_+^j:=\tau_i$. If $m>0$, then corresponding to each of the $T_{\beta}$-periodic points $z^{m+i}$, $i=1,\ldots,p$, there are two $\Pb$-periodic tilings:
$$T_i:=\cup_{n=0}^{\infty}\Pb^{np}(\{\tau_-^{i+m}-z^{m+i},\tau_+^{m+i}-z^{m+i}\})$$ and
$$T_i^0:=\cup_{n=0}^{\infty}\Pb^{np}(\{\tau_-^{m+i}-z^{m+i},\tau_1\}).$$

Note that $T_j^0\sim_{srp}T_i^0$ and $T_k^0\sim_{srp}T_k$ for all $i,j,k\in\{1,\ldots,p\}$, so also $T_i\sim_{srp} T_j$ for all $i,j\in\{1,\ldots,p\}$.

Given $T,T'\in \Ob$ we write $T\sim_s T'$ provided $d(\Pb^k(T),\Pb^k(T'))\to 0$ as $k\to\infty$. Since $d(\Pb^k(T),\Pb^k(T'))\to 0$ as $k\to\infty$ if and only if there is $k\in\N$ so that $B_0[\Pb^k(T)]=B_0[\Pb^k(T')]$ (\cite{BO}), we see that $T\sim_s T'\implies T-t\sim_s T'-t$ for all $|t|<\epsilon$, for some $\epsilon>0$. We'll say that $T\sim_s T'$ {\em densely on the interval $J$} if $T-t\sim_s T'-t$ for a dense set of $t\in J$. Note that then $T-t\sim_s T'-t$ for an open dense subset of $t$ in $J$. If $P,P'$ are allowed patches, we'll say that $P\sim_s P'$ densely on $J$ if $T\sim_s T'$ densely on $J$ for all $T,T'\in\Ob$ with $P\subset T$, $P'\subset T'$.

\begin{lemma} \label{stably equiv 1}Suppose that $m>0$ and $p>1$. There are then $i\ne j$ so that $T_i\sim_s T_j$ densely on $\R^+$. If $m>0$ and $p=1$, then $T_1\sim_sT_1^0$ densely on $\R^+$.
\end{lemma}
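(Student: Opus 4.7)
The plan has three parts: reduce to density of a coincidence set, construct an initial coincidence via the monotonicity of $\pb$, and use $\beta^p$-invariance together with the arithmetic of the $T_\beta$-orbit to upgrade to density on $\R^+$.

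First, since each $T_i$ (and $T_1^0$) is a fixed point of $\Pb^p$, the identity $\Pb^{np}(T-t) = T - \beta^{np}t$ gives that $T_i - t \sim_s T_j - t$ iff $B_0[T_i - \beta^{np}t] = B_0[T_j - \beta^{np}t]$ for some $n \geq 0$ (using the \cite{BO} criterion cited in the text, along with the fact that convergence along the arithmetic progression $np$ implies convergence along $np+r$ for each fixed residue $r$ by continuity of $\Pb^r$). Define the coincidence set
\[
B_{ij} := \{s > 0 : B_0[T_i - s] = B_0[T_j - s]\},
\]
which is open and $\beta^p$-invariant. The lemma reduces to the assertion that $\bigcup_{n \geq 0} \beta^{-np} B_{ij}$ is dense in $\R^+$ for an appropriate pair.

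Next I would construct coincidences in $B_{ij}$. Property 1 (every letter $b > 1$ in $\mathcal{L}(\pb)$ is uniquely preceded by $b-1$) implies that every allowed tiling decomposes into blocks of the form $\tau_1, \tau_2, \ldots, \tau_k$ separated by resets to $\tau_1$, with block lengths encoded by the $T_\beta$-itinerary of the block's initial vertex. The block sequences of $T_i$ on $\R^+$ are therefore controlled by the iteration of $T_\beta$ on the shared periodic cycle $\{z^{m+1}, \ldots, z^{m+p}\}$. Using this monotonic structure and following the approach of \cite{BD2} and \cite{B1}, I would locate a position $x_0 > 0$ at which $T_i$ and $T_j$ simultaneously reset to $\tau_1$ and thereafter proceed through a common initial block pattern; this produces an interval inside $B_{ij}$. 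The case $p = 1$ with $T_1, T_1^0$ is handled identically, using that $T_1^0$ is already reset at $0$ by construction, so that the first $\tau_1$-reset of $T_1$ to the right of $0$ supplies a simultaneous reset.

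Finally, to obtain density of $\bigcup_n \beta^{-np} B_{ij}$ in $\R^+$, I would produce infinitely many disjoint coincidence intervals $(x_n, x_n + \ell_n) \subset B_{ij}$ by iterating the synchronization construction at successive simultaneous resets going off to the right. Since the positions $x_n$ are $\Q[\beta]$-combinations of orbit values $z^j$ and $\beta$ is irrational, the numbers $\log x_n \pmod{p\log\beta}$ form a dense subset of $\R/(p\log\beta)\Z$; combined with the $\beta^p$-invariance of $B_{ij}$ this yields the desired density. The main obstacle is the synchronization step: tracking the cumulative block positions in two distinct itineraries and exploiting the monotonicity of $\pb$ to ensure that once a simultaneous reset occurs, the match propagates through enough subsequent blocks, is where the combinatorial core of the argument (drawing on \cite{BD2,B1}) must be carried out carefully.
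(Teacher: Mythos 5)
Your proposal has a genuine gap at its combinatorial core. The entire construction hinges on locating a position $x_0>0$ at which $T_i$ and $T_j$ ``simultaneously reset to $\tau_1$,'' i.e.\ an actual shared tile $\tau_1+x_0\in T_i\cap T_j$. But the lemma is only non-trivial when the coincidence rank of $\pb$ is greater than $1$ (if it equals $1$, all strong regionally proximal tilings are automatically densely stably related), and in that regime the tilings $T_1,\ldots,T_p$, which all lie in a single fiber of $\pi_{max}$, may share no tiles whatsoever on $\R^+$ --- pairwise disjointness of $r$ tilings in a fiber is exactly what coincidence rank $r>1$ permits. You give no argument for why a simultaneous reset must exist, and none is available: Property 1 guarantees each of $T_i$, $T_j$ individually contains infinitely many translates of $\tau_1$, but not at common positions. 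The paper's proof is built precisely to avoid this obstruction: it never produces a literal tile coincidence between the periodic tilings. Instead it finds (via the Meyer property) two translation-equivalent configurations $\mathcal{C}_k=\mathcal{C}_l+w$ with differing successors, and then proves that the diverging successor tiles are $\sim_s$-related densely on a small interval by contradiction --- if not, a compactness argument produces $r+1$ pairwise disjoint, pairwise strong regionally proximal tilings, violating coincidence rank $r$. That step, which establishes the weak relation $\sim_s$ without any shared tiles, is the idea your proposal is missing and cannot be replaced by the reset mechanism.

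Two further points. First, even granting a single coincidence interval $(a,b)\subset B_{ij}$, your reduction only yields $\bigcup_{m\in\Z}\beta^{mp}(a,b)$ inside the stable set, which is dense in $\R^+$ only if $b/a\ge\beta^p$; your fix --- that $\log x_n \pmod{p\log\beta}$ is dense because the $x_n$ are $\Q[\beta]$-combinations of the $z^j$ --- is an unproven equidistribution assertion and does not follow from irrationality of $\beta$ alone. The paper sidesteps this entirely: once $\tau_1\sim_s\tau_q-min(\tau_q)$ densely on $[0,\epsilon)$ is known, the fact that $\Pb^p$ fixes the initial vertex $0$ of both tiles and maps each to a supertile beginning with itself propagates the dense relation from $[0,\epsilon)$ to $[0,\beta^{np}\epsilon)$ for all $n$, hence to all of $\R^+$, with no equidistribution needed. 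Second, your $p=1$ case inherits the same flaw: $T_1^0$ has $\tau_1$ at the origin while $T_1$ has $\tau_+^{m+1}-z^{m+1}$ there, and nothing forces their forward orbits to ever place $\tau_1$ at a common location.
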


\begin{proof} We give a variant of the basic argument of \cite{BD2}. Let $r$ be the coincidence rank of $\pb$. We may assume that $r>1$, otherwise $T_i\sim_s T_j\sim_sT_k^0$ densely on $\R$ for all $i,j,k$. ($T_i\sim_{srp}T_j\sim_{srp} T_k^0$ for all $i,j,k$, as noted above, and, in the context of any Pisot substitution tiling, $T\sim_{srp}T'\implies T\sim_sT'$ densely on $\R$ - see \cite{BK}.) There are tilings $S_1,\ldots,S_r$ with the properties (see \cite{BK}):
\begin{enumerate}
\item $S_i\sim_{srp} S_j$ for $1\le i,j\le r$,
\item $S_i$ is $\Pb$-periodic for each $i\in\{1,\ldots,,r\}$, and
\item $S_i\cap S_j=\emptyset$ for all $i\ne j\in\{1,\ldots,r\}$.
\end{enumerate}
Replacing $\Pb$ by an appropriate power, we may assume that the $S_i$ are all fixed by $\Pb$. For each $i$, let $V_i$ be the collection of all the vertices of tiles in $S_i$ and let $V=\{\cdots v_{-1}<v_1<v_2\cdots\}:=\cup_{i=1}^rV_i$. For each $j\in\Z$ and $i\in\{1,\ldots,r\}$, let $\eta_j^i$ be the tile of $S_i$ with $(v_j+v_{j+1})/2\in spt(\eta_j^i)$. We call the collection $\mathcal{C}_j:=\{\eta_j^i:i=1,\ldots,r\}$ a {\em configuration}. For each $j\in\Z$, let $m_j:=max\{min(\eta_j^i):i=1,\ldots,r\}$ and $M_j:=min\{max(\eta_j^i):i=1,\ldots,r\}$.

Up to translation, there are only finitely many configurations (this is the Meyer property, see \cite{BK}). Thus, since the $S_i$ are not translation-periodic, there are $k,l\in\Z$ and $w\in\R$ with 
$\mathcal{C}_k=\mathcal{C}_l+w$ but $\mathcal{C}_{k+1}\ne\mathcal{C}_{l+1}+w$. For each $i\in\{1,\ldots,r\}$, let $i'\in\{1,\ldots,r\}$ be (uniquely) defined by $\eta_k^{i'}=\eta_l^i+w$.
Note that if $max(\eta_j^i)>M_j$, then $\eta_{j+1}^i=\eta_j^i$. Hence, letting $\mathcal{C}_l':=\{\eta_l^i: \eta_{k+1}^{i'}\ne\eta_{l+1}^i\}$ we see that $max(\eta_l^i)=M_l^i$ for all $\eta_l^i\in\mathcal{C}_l'$.
Furthermore, if $\eta_l^i\in\mathcal{C}_l'$, then either $\eta_{l+1}^i$ is of type 1 (that is, is a translate of $\tau_1$) or $\eta_{k+1}^{i'}$ is of type 1 (this is Property 2). Since the $S_i$ are pairwise disjoint, it follows that $\sharp\mathcal{C}_l'\in\{1,2\}$.

We consider two cases:

{\bf Case 1:} $\sharp\mathcal{C}_l'=1$.

Let $\{\eta_l^i\}=\mathcal{C}_l'$. We claim that $\eta_{l+1}^i+w\sim_s \eta_{k+1}^{i'}$ densely on $[0,\epsilon)$, where  $\epsilon:=min\{M_{l+1}-m_{l+1},M_{k+1}-m_{k+1}\}$. If not, there is $t_0\in (0,\epsilon)$ and $\delta>0$ so that $B_0[\Pb^n(S_{i}-t)]\cap B_0[\Pb^n(S_{i'}-w-t)]=\emptyset$ for all $n\in\N$ and $|t-t_0|<\delta$. That is, $B_{\beta^n\delta/2}[\Pb^n(S_i-t_0)]\cap B_{\beta^n\delta/2}[\Pb^n(S_{i'}-w-t_0]=\emptyset$ for all $n\in\N$. Choose $n_k\to\infty$ so that $\Pb^{n_k}(S_j-t_0)\to W_j\in \Ob$ and $\Pb^{n_k}(S_j+w-t_0)\to U_j\in\Ob$ as $k\to\infty$, for all $j=1,\ldots,r$ (such $\{n_k\}$ exists by compactness of $\Ob$). Then the $W_j$, $j=1,\ldots,r$, are pairwise disjoint (the $S_j$ are pairwise disjoint and fixed by $\Pb$) and stongly regionally proximal ($\sim_{srp}$ is closed and preserved by both translation and $\Pb$), as are the $U_j$. Furthermore, $W_i\cap U_{i'}=\emptyset$.
Since $\emptyset\ne\{W_j:j\ne i\}=\{U_j:j\ne i'\}$, $W_i\sim_{srp} U_{i'}$. We now have $r+1$ pairwise disjoint and strongly regionally proximal tilings ($W_1,\ldots,W_r, U_{i'}$), contradicting coincidence rank $r$.

So $\eta_{l+1}^i+w\sim_s \eta_{k+1}^{i'}$ densely on $[0,\epsilon)$. Again, since $\eta_l^{i}+w=\eta_k^{i'}$, it follows from Property 2 that one of $\eta_{l+1}^{i}$ and $\eta_{k+1}^{i'}$ must be of type 1: say $\eta_{l+1}^{i}=\tau_1+M_l$ and $\eta_{k+1}^{i'}=\tau_j-min(\tau_j)+M_l+w$. We have that $\tau_1\sim_s\tau_j-min(\tau_j)$ densely on $[0,\epsilon)$ (and $j\in\{2,\ldots,m+p\}$ since $\mathcal{C}_{k+1}\ne \mathcal{C}_{l+1}+w$). Let 
$q\in\{1,\ldots,p\}$ be such that $q$ and $j$ are in phase, that is, $p|(q-j)$. Then $\tau_1\sim_s\tau_q-min(\tau_q)$ densely on $[0,\epsilon)$. Since $\Pb^p(\tau_1)=\{\tau_1,\ldots\}$ and $\Pb^p(\tau_q-min(\tau_q))=\{\tau_q-min(\tau_q),\ldots\}$ we see that $T_n^0\sim_sT_q$ densely on $\R^+$ (for all $n$). Then $T_j=\Pb^{j-q}(T_q)\sim_s\Pb^{j-q}(T_n^0)=T_{n+j-q}^0\sim_sT_q$ densely on $\R^+$ for all $j=1,\ldots,p$.

{\bf Case 2:} $\sharp\mathcal{C}_l'=2$.

Say $\mathcal{C}_l'=\{\eta_l^{i_1},\eta_l^{i_2}\}$ and, without loss of generality: 
$$\eta_{l+1}^{i_1}=\tau_1+M_l,$$
$$\eta_{l+1}^{i_2}=\tau^i_+-z^i+M_l,$$
$$\eta_{k+1}^{i_1'}=\tau^j_+-z^j+M_l+w, \text{ and }$$
$$\eta_{k+1}^{i_2'}=\tau_1+M_l+w,$$
for some $1\ne i\ne j\ne1$. Suppose that $i$ and $j$ are in phase and,  say, $0<i<j$. There is then $q$ so that $i+q=m$ and $j+q=m+sp$, which we may take to be $m+p$. Now, from Property 1, $\tau^i_--z^i+M_l\in\mathcal{C}_l$
and $\tau^j_--z^j+M_l+w\in\mathcal{C}_k=\mathcal{C}_l+w$. Thus $\tau^j_--z^j+M_l\in\mathcal{C}_l$. Say $\tau^i_--z^i+M_l\in S_a$ and $\tau^j_--z^j+M_l\in S_b$. Then $\tau^{m+1}_--z^{m+1}+\beta^{q+1}M_l\in\Pb^{q+1}(S_a)\cap\Pb^{q+1}(S_b)=S_a\cap S_b$, in contradiction to the disjointness of $S_a$ and $S_b$.

Thus, $i$ and $j$ are not in phase (so $p>1$). It follows as in Case 1 that $\tau^i_+-z^i\sim_s\tau^j_+-z^j$ densely on $[0,\epsilon)$ for some $\epsilon>0$ and then that $T_{i'}\sim_s T_{j'}$ densely on $\R^+$, with $i'\ne j'\in\{1,\ldots,p\}$ such that $i'+m$ is in phase with $i$ and $j'+m$ is in phase with $j$. 
\end{proof}

Tilings $T,T'$ are {\em asymptotic on $\R^+$} provided $\lim_{t\to\infty}d(T-t,T'-t)=0$. It follows from the Meyer property, which all Pisot substitution tilings enjoy, that $T,T'\in\Ob$ are asymptotic on $\R^+$ if and only if there is $t_0$ so that $B_0[T-t]=B_0[T'-t]$ for all $t>t_0$ (see \cite{BO}). For a discussion of the fundamental role asymptotic tilings play in the classification of one-dimensional substitution tilings, and an algorithm for finding them, see \cite{BD1}.

The tilings $T_j^0$ are all pairwise asymptotic on $\R^+$. At least for some $\beta$, $\Ob$ has additional asymptotic pairs.

\begin{example} \label{golden mean} Consider $\beta=(3+\sqrt{5})/2$. Then $\kappa(1)=2\overline{1}$ and the substitution is $$1\to121;\,\,\,2\to 21.$$
The tilings $T_1^0$ and $T_1$ are asymptotic on $\R^+$.
\end{example}

\begin{example} \label{asymptotic example} Consider $\beta$ with $\kappa(1)=22\overline{01}$ and substitution 
$$1\to 12;\,\,\,2\to34;\,\,\,3\to2341;\,\,\,4\to23.$$
The tilings corresponding to the $T_{\beta}$-periodic points with iteneraries $\overline{21}$ and $\overline{20}$ are asymptotic on $\R^+$.

\end{example}

For an arbitrary substitution $\phi$ with prototiles $\rho_1,\ldots,\rho_d$, the rose $\mathcal{R}_{\phi}$ is the wedge of circles obtained by identifying the vertices in the disjoint union of the prototiles: $\mathcal{R}_{\phi}=\cup_{i=1}^d\rho_i/\sim$.
Let $g_{\phi}:\Omega_{\phi}\to\mathcal{R}_{\phi}$ be given by 

\begin{equation}\label{def of g}
g_{\phi}(T)=[(x,i)]
\end{equation}
provided $x\in spt(\rho_i)$ and $\rho_i-x\in T$, and let 

\begin{equation} \label{def of f}
f_{\phi}:\mathcal{R}_{\phi}\to\mathcal{R}_{\phi}
\end{equation}
be the continuous map induced by $\Phi$; that is, $f_{\phi}\circ g_{\phi}=g_{\phi}\circ\Phi$. For $\beta$ substitutions, it will be convenient to also have the (discontinuous) map 
 
 \begin{equation} \label{def of g_I}
g_I: \Ob\to I=[0,1]
\end{equation}
given by $g_I(T)=x$ provided $min(\tau_j)\le x< max(\tau_j)$ and $\tau_j-x\in T$. Note that $T_{\beta}\circ g_I=g_I\circ\Pb$ and $g_{\pb}=q\circ g_I$, where $q:I\to \mathcal{R}_{\pb}\simeq I/\{0=s_0<s_1<\cdots<s_{m+p}=1\}$ is the natural quotient map and $\{s_i:i=0,\ldots,s_{m+p}\}=\{0,1\}\cup\mathcal{O}_{T_{\beta}}(1)$.

The following somewhat technical result is crucial to our analysis.

\begin{prop} \label{not asymptotic} If $T\ne T'\in\Ob$ are asymptotic on $\R^+$ and are both $\Pb$-periodic of (least) period $p$, then either $\{T,T'\}\subset \{T_j^0:j=1,\ldots,p\}$ or $T$ and $T'$ are in distinct $\Pb$-orbits.
\end{prop}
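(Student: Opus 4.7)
The plan is to argue by contradiction: assume $T \ne T'$ are asymptotic on $\R^+$, both $\Psi_\beta$-periodic of least period $p$, and lie in a common $\Psi_\beta$-orbit; I will show $\{T, T'\} \subset \{T_j^0 : j = 1, \ldots, p\}$.

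First I would bootstrap the region of agreement down to $(0, \infty)$. The Meyer property supplies $t_0 > 0$ with $T$ and $T'$ coinciding as tilings on $(t_0, \infty)$; because $\Psi_\beta^{-1}$ deflates horizontal scale by a factor of $\beta$, a direct super-tile argument shows that $\Psi_\beta^{-n}(T)$ and $\Psi_\beta^{-n}(T')$ coincide on $(t_0/\beta^n, \infty)$. Specializing to $n = Np$ and invoking $\Psi_\beta^{-Np}(T) = T$, $\Psi_\beta^{-Np}(T') = T'$, the tilings themselves coincide on $(t_0/\beta^{Np}, \infty)$ for every $N \in \N$, hence on all of $(0, \infty)$. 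Using the monotonicity Properties 1 and 3 of $\pb$, I would then locate the ``fault line'' $c \le 0$ where agreement ends: starting from the tile covering $0$ from the right and pushing leftward, Property 1 uniquely determines the predecessor of any tile whose type is not $1$, so the common chain of tiles extends leftward until it reaches the left vertex of a tile of type $1$ (whose predecessor can vary by Property 3); $c$ is this vertex.

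To force $c = 0$, I would invoke $\Psi_\beta$ in the inflation direction. Because $c$ is a vertex common to $T$ and $T'$, the common region $[c, \infty)$ maps cleanly under $\Psi_\beta$ to the common region $[\beta c, \infty)$ of $\Psi_\beta(T)$ and $\Psi_\beta(T')$; iterating $p$ times and using $\Psi_\beta^p(T) = T$, $\Psi_\beta^p(T') = T'$ yields coincidence of $T$ and $T'$ themselves on $[\beta^p c, \infty)$. If $c < 0$, this strictly enlarges the agreement interval, contradicting the maximality of $c$ (and iterating would force $T = T'$); hence $c = 0$, and $\tau_1$ is the tile with left vertex at the origin in both $T$ and $T'$. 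It remains to classify the $\Psi_\beta^p$-fixed tilings whose tile at the origin is $\tau_1$: such a tiling is pinned down by its tile $\tau_L$ with right vertex at $0$, and the self-reproducing condition---that $\tau_L$ be the rightmost tile of $\Psi_\beta^p(\tau_L)$---forces the last letter of $\pb^p(k)$ (where $k$ is the type of $\tau_L$) to equal $k$. This reduces to $T_\beta^p(s_k) = s_k$, so $s_k$ must be a $T_\beta^p$-periodic point in $\{s_j\}$; under the hypothesis $m > 0$, these are exactly $\{z^{m+1}, \ldots, z^{m+p}\}$, yielding the $p$ tilings $T_j^0$.

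The main obstacle is the inflation step: one must verify that $\Psi_\beta$ transports the boundary of agreement faithfully from $c$ to $\beta c$ without mixing agreement and disagreement regions, which hinges on placing $c$ at a genuine vertex common to both tilings---precisely what the monotonic structure of $\pb$ guarantees. The remaining pieces (the deflation bootstrap, the Property-1 walk, and the enumeration of $T_\beta^p$-periodic vertices in $\{s_j\}$) are more direct.
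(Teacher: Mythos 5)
The first step---the ``deflation bootstrap''---is where the argument breaks, and it cannot be repaired in the form you state it. You claim that if $T$ and $T'$ coincide on $(t_0,\infty)$ then $\Pb^{-n}(T)$ and $\Pb^{-n}(T')$ coincide on $(t_0/\beta^n,\infty)$. Passing to $\Pb^{-1}$ means passing to the supertile decomposition, and the supertile of $T$ containing a point $s$ slightly to the right of $t_0$ may extend to the left of $t_0$, into the region where $T$ and $T'$ disagree; recognizability only guarantees that the supertiles of $T$ and $T'$ at $s$ agree once $s$ exceeds $t_0$ by the recognizability radius $R$. So the bootstrap gives agreement of $\Pb^{-n}(T),\Pb^{-n}(T')$ only on $(t_n,\infty)$ with $t_{n+1}=(t_n+R)/\beta$, and $t_n\to R/(\beta-1)>0$, not $0$. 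The paper's Example \ref{golden mean} is a concrete counterexample to your claim: for $\beta=(3+\sqrt{5})/2$ the tilings $T_1$ and $T_1^0$ are both fixed by $\Pb$ and asymptotic on $\R^+$ with first agreement at $t_0=1$ (they read $21$ versus $12$ over $[0,1]$ and coincide thereafter), yet they certainly do not coincide on $(0,\infty)$; your bootstrap, which nowhere uses the common-orbit hypothesis, would force them to. Everything downstream (the Property-1 walk to a fault line $c\le 0$, forcing $c=0$ by inflation, and the classification of $\Pb^p$-fixed tilings with $\tau_1$ at the origin) rests on this false reduction.

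The paper's proof has to go in the opposite, inflating direction and is considerably more delicate: one tracks $t_0(n):=t_0(\Pb^n(T),\Pb^n(T'))$, which is multiplied by $\beta$ at each step except when the two disagreeing tiles abutting the agreement point are the pair $\tau_-^m,\tau_-^{m+p}$ that $T_{\beta}$ folds together (since $t_{\pb}(r)=t_{\pb}(s)$ for $r\ne s$ only when $max(\tau_r),max(\tau_s)\in\{z^m,z^{m+p}\}$). Periodicity of $t_0(n)$ forces this folding to occur with a definite frequency, and that pins down the itineraries $\kappa(g_I(T))$ and $\kappa(g_I(T'))$ well enough to show they cannot be shift-images of one another unless $T=T'$. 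Note that in the case $t_0>0$ the conclusion is not a contradiction but ``distinct $\Pb$-orbits'': asymptotic periodic pairs with $t_0>0$ genuinely occur (Examples \ref{golden mean} and \ref{asymptotic example}), which is exactly why no argument deriving coincidence on $(0,\infty)$ from asymptoticity and periodicity alone can succeed.
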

\begin{proof} Let's suppose that $T\ne T'\in\Ob$ are asymptotic on $\R^+$ and $\Pb$-periodic of period $p$. Let $t_0=t_0(T,T'):=\inf\{t':B_0[T-t]=B_0[T'-t] \text{ for all }t>t'\}$. Then there are $k\ne k'$ so that $B_0[T-t_0]=\{\tau_-^k-z^k+t_0,\tau_1+t_0\}$ and $B_0[T'-t_0]=\{\tau_-^{k'}-z^{k'}+t_0,\tau_1+t_0\}$. (The occurance of the $\tau_1$ in both patches follows from Property 3.) If $t_0(T,T')=0$, then $\{T,T'\}\subset\{T_j^0:j=1,\ldots,p\}$. We will assume from now on that $t_0(T,T')>0$. Then $k,k'$ must be {\em in phase}; that is, $p|(k-k')$, for otherwise, $t_0(T,T')=t_0(\Pb^p(T),\Pb^p(T'))=\beta^p t_0(T,T')> t_0(T,T')$. Since $t_{\pb}(r)=t_{\pb}(s)$ for $r\ne s$ only when $max(\tau_r),\max(\tau_s)\in\{z^m,z^{m+p}\}$, $t_0(n):=t_0(\Pb^n(T),\Pb^n(T'))$ will increase with $n$ until, for some $n_0$, $\tau_-^m-z^m+t_0(n_0), \tau_-^{m+p}-z^{m+p}+t_0(n_0)\in B_0[\Pb^{n_0}(T)-t_0(n_0)]\cup B_0[\Pb^{n_0}(T')-t_0(n_0)]$. Let us examine the effect of applying $\Pb$ to the patches in $\Pb^{n_0}(T)$ and $\Pb^{n_0}(T')$ that lie over $[0,t_0(n_0)]$.

First let's suppose that $z^{m+p}<z^m$ and, without loss, that $\tau_-^{m+p}-z^{m+p}+t(n_0)\in \Pb^{n_0}(T)$ and $\tau_-^m-z^m+t_0(n_0)\in \Psi^{n_0}(T')$. Let $k,k'$ be such that $\tau_k=\tau_-^{m+p}$ and $\tau_{k'}=\tau_-^m$. From Property 1
we have that the patches $P:=\{\tau_1,\ldots\tau_k\}-z^{m+p}+t_0(n_0)$ and $P':=\{\tau_1,\ldots,\tau_{k'}\}-z^m+t_0(n_0)$ are contained in $\Psi^{n_0}(T)$ and $\Psi^{n_0}(T')$, respectively. Since $T_{\beta}(z^{m+p})=T_{\beta}(z^m)$, we have that $T_{\beta}(t+z^m-z^{m+p})=T_{\beta}(t)$ for $0\le t\le z^{m+p}$ and it follows that $\Pb(P)\subset\Pb(P')$. Let $\tau_-^l-z^l-z^{m+p}+t_0(n_0)$ be the tile in $\Pb^{n_0}(T)$ that immediately precedes $P$.
We have that $t_0(n_0+1)=\beta t_0(n_0)-\beta z^{m+p}$ and $\tau_-^{l+1},\tau_-^1\in B_0[\Pb^{n_0+1}(T)-t_0(n_0+1)]\cup B_0[\Pb^{n_0+1}(T')-t_0(n_0+1)]$. Now $t_0(n)$ must increase for $n=n_0+1,\ldots,n_0+m-1$ and this means that $m\le p$. Thus, since $l+1$ and $1$ must be in phase, $l=p$.

An identical analysis with the same conclusion applies to the case $z^m<z^{m+p}$. To ease notation, we may then (replacing $T$ and $T'$ by $\Pb^{n_0+1}(T)$ and $\Pb^{n_0+1}(T')$) assume that $B_0[T-t_0(0)]=\{\tau_-^{p+1}-z^{p+1}+t_0(0),\tau_1+t_0(0)\}$ and $B_0[T'-t_0(0)]=\{\tau_-^1-1+t_0(0),\tau_1+t_0(0)\}$. We see now that 
$B_0[\Pb^n(T)-t_0(n)]=\{\tau_-^{p+1}-z^{p+1}+t_0(n),\tau_1+t_0(n)\}$ and $B_0[\Pb^n(T')-t_0(n)]=\{\tau_-^1-1+t_0(n),\tau_1+t_0(n)\}$ if and only if $n=km$ for some $k$. Hence $m|p$.

Let us next rule out $z^m<z^{m+p}$. Suppose that this is the case. The patch $P':=\{\tau_1,\tau_2,\ldots,\tau_-^m\}-z^m+t_0(m-1)\}\subset \Pb^{m-1}(T')$ is immediately preceded (in $\Pb^{m-1}(T')$) by a translate of some tile $\tau_-^l$.
Then $B_0[\Pb^m(T')-t_0(m)]=\{\tau_-^{l+1}-z^{l+1}+t_0(m),\tau_1+t_0(m)\}$ while $B_0[\Pb^m(T)-t_0(m)]=\{\tau_-^1-1+t_0(m),\tau_1+t_0(m)\}$. Since $z^m<z^{m+p}$, $m$ must be greater than 1 and then $z^m<\beta^{m-1}z^1$. It follows that the tile $\tau_1-z^m+t_0(m-1)$ in $P'$ can't be the first tile in a patch $\Pb^{m-1}(\tau_1+\beta^{1-m}(t_0(m-1)-z^m)$ with $\tau_1+\beta^{1-m}(t_0(m-1)-z^m)\in T'$. Thus it must be the case that there is $k$, $1\le k<m-1$, and $\tau\in \Pb^k(T')$ with $\Pb(\tau)=\{\ldots,\tau_-^1-z^1+t',\tau_1+t',\ldots\}$ and $\Pb^{m-2-k}(\tau_1+t')=\{\tau_1-z^m+t_0(m-1),\ldots\}$. But then $l=m-1-k$ so that $1<l+1<m$ and then $1$ and $l+1$ are definitely out of phase. 

Thus $z^{m+p}<z^m$. 

For each $n\in\N$ we have:
\begin{enumerate}
\item $B_0[\Pb^{nm}(T)-t_0(nm)]=\{\tau_-^{p+1}-z^{p+1}+t_0(mn),\tau_1+t_0(nm)\},$ 
\item $B_0[\Pb^{nm}(T')-t_0(nm)]=\{\tau_-^1-z^1+t_0(mn),\tau_1+t_0(nm)\},$
\item $\{\tau_-^p-z^p,\tau_1,\tau_2,\ldots,\tau_-^{m+p}\}-z^{m+p}+t_0(mn-1)\subset \Pb^{mn-1}(T), \text{ and}$
\item $\{\tau_1,\tau_2,\ldots,\tau_-^m\}-z^m+t_0(mn-1)\subset \Pb^{mn-1}(T').$
\end{enumerate}

{\bf Claim 1}: $t_0(nm)<1$ for all $n$.

To see that this must be true, suppose that $b=t_0(nm)-1>0$ for some $n$. Then the patch $P_n=\{\tau_1,\tau_2,\ldots, \tau_-^1\}-z^1+t_0(nm)$ in $\Pb^{nm}(T')$ has the property that $\Pb^m(P)$ contains the patch $P_{n+1}$ in $\Pb^{(n+1)m}(T')$, so that $t_0((n+1)m)\ge\beta^m b+1$. That is, $t_0((n+1)m)-1\ge\beta^mb$. Inductively,  $t_0((n+k)n)\ge \beta^kb$ for $k\in\N$. But $t_0$ is periodic.
If $t_0(nm)=1$ for some $n$ then $\tau_1\in \Pb^{nm}(T')$ and $\tau_1\in\Pb^{km}(T')=T'$, so $T'\ne T_j$ for any $j$. We have established Claim 1.
\\
\\
Let $y:=g_I(T')$ (recall definition \ref{def of g_I}) and let $r:=\lfloor \beta z^{m+p}\rfloor$. In other words, $y=1-t_0(0)=z^1-t_0(0)$ and $r$ is the number of discontinuities of $T_{\beta}$ on $[0,z^{m+p}]$.

{\bf Claim 2}: $\kappa(y)=\overline{a_1a_2\cdots a_{m-1}(a_m-r-1)}$.

To prove the claim, we first suppose that there is $j<m$ with $(\kappa(y))_j<a_j$. Then the patch $\{\tau_1,\tau_2,\ldots,\tau_-^{j+1}\}-z^{j+1}+t_0(j)$ in $\Pb^{j}(T')$ has support contained in $[0,\infty)$. The patch $\{\tau_1,\tau_2,\ldots,\tau_-^m\}-z^m+t_0(m-1)$ of $\Pb^{m-1}(T')$ then also has support in $[0,\infty)$ and hence so does the patch $\{\tau_1,\tau_2,\ldots,\tau_-^1\}-z^1+t_0(m)$. This means that $t_0(m)\ge 1$, which has been ruled out by Claim 1.
Thus $(\kappa(y))_j=a_j$ for $j=1,\ldots,m-1$. It follows that $t_0(j)=z^{j+1}-T_{\beta}^j(y)$ for $j=0,\ldots,m-2$

If $|t_0(m-1)-t|<z^{m+p}$, then $B_0[\Pb^m(T)-\beta t]=B_0[\Pb^m(T')-\beta t]$. Consequently, $t_0(m-1)>z^{m+p}$, which means that $T_{\beta}^{m-1}(y)<z^{m}-z^{m+p}$. For $t\in [0,z^{m+p}]$, $T_{\beta}(t)=T_{\beta}(t+z^m-z^{m+p})$. Thus $T_{\beta}$ has $r$ discontinuities on the interval $(z^m-z^{m+p}, z^m]$; it's also discontinuous at $z^m-z^{m+p}$. Thus $T_{\beta}$ has at least $r+1$ discontinuities between $T_{\beta}^{m-1}(y)$ and $z^m$ and we have that $(\kappa(y))_{m}\le a_{m}-r-1$. Suppose that $(\kappa(y))_{m}<a_m-r-2$. Then  $t_0(m-1)>z^m-z^{m+p}-1/\beta$ and $t_0(m)\ge1$, contrary to Claim 1. Thus, $(\kappa(y))_j$ is as claimed for $j=1,\ldots,m$. Repeating the argument periodically gives $\kappa(y)=\overline{a_1a_2\cdots a_{m-1}(a_m-r-1)}$, as claimed.
\\
\\
From Claim 2, $y$ is $T_{\beta}$-periodic of period $m$. Thus $T'$ is $\Pb$-periodic of period $m$, so $m=p$ and we have that $\kappa(y)=\overline{a_1a_2\cdots a_{p-1}(a_p-r-1)}$. Let's now determine itenerary $\kappa(x)$ of $x:=g_I(T)$.

Consider the patch $\{\tau_-^p-z^p,\tau_1\}-z^{2p}+t_0(p-1)\subset \Pb^{p-1}(T)$ (see item (3), with $m=p$, $n=1$) and note that $\tau_1-z^{2p}+t_0(p)$ has support contained in $\R^+$. This can only come from  a patch $\{\tau_-^{p-1}-z^{p-1},\tau_1\}-\beta^{-1}(z^{2p}-t_0(p-1))$ in $\Pb^{p-2}(T)$, which can only have come from a patch $\{\tau_-^{p-2}-z^{p-2},\tau_1\}-\beta^{-2}(z^{2p}-t_0(p-1))$ in $\Pb^{p-3}(T)$,... .
We see that the patch $\{\tau_-^1-1,\tau_1,\tau_2,\ldots.\tau_-^{p+1}\}-z^{p+1}+t_0(0)$ is contained in $T$ and the support of $\tau_1-z^{p+1}+t_0(0)$ is contained in $\R^+$. The origin is then located in the interior of the support of the patch $\{\tau_1,\tau_2,\ldots,\tau_-^1\}-1-z^{p+1}+t_0(0)\subset T$ (see Claim 1) at relative position $x$. That is, $x=1+z^{p+1}-t_0(0)$. Since, under aplication of $\Pb^{p-1}$, the tile $\tau_-^1-1-z^{p+1}+t_0(0)$ produces the patch $\{\ldots,\tau_-^p-z^p-z^{2p}+t_0(p-1)\}$ in $\Pb^{p-1}(T)$ and the support of the latter patch is contained in $(-\infty, t_0(p-1)]$, we have $\beta^j(max(\tau_-^1-1-z^{p+1}+t_0(0)))=\beta^j(t_0(0)-z^{p+1})\le t_0(j)$ for $j=0,\ldots,p-1$. If there were a discontinuity of $T_{\beta}$ between $T_{\beta}^j(x)$ and $z^{j+1}$ for some $j\in\{0,\ldots,p-2\}$, then $t_0(j+1)$ would be at least 1, which is not allowed by Claim 1. Thus $(\kappa(x))_j=a_j$ for $j=1,\ldots,p-1$. 

One can show that $\kappa(x)=\overline{a_1a_2\cdots a_{p-1}(a_p-1)}$ (as in Example \ref{asymptotic example}) but at this point we have all we need to know that the tilings $T\ne T'$ are not in the same $\Pb$-orbit. Indeed, since $\Pb\circ g_I=g_I\circ T_{\beta}$, $\Pb^k(T)=T'$ would imply $\sigma^k(\kappa(g_I(T)))=\kappa(g_I(T'))$, which would mean that
$\sum_{j=1}^p(\kappa(g_I(T)))_j=\sum_{j=1}^p(\kappa(g_I(T')))_j$. The latter equality can only hold if $g_I(T)=g_I(T')$ (since we have shown that $(\kappa(g_I(T)))_j=(\kappa(g_I(T')))_j$ for $j=1,\ldots,p-1$), but then $T=T'$.

\end{proof} 

\begin{corollary} \label{cor} If $i\ne j$, then $T_i$ and $T_j$ are not asymptotic on $\R^+$.
\end{corollary}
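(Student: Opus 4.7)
My plan is to derive this corollary directly from Proposition~\ref{not asymptotic}. That proposition says that if two distinct asymptotic $\Pb$-periodic tilings of least period $p$ exist, then either (a) both belong to $\{T_k^0 : k=1,\ldots,p\}$, or (b) they lie in distinct $\Pb$-orbits. I would rule out both alternatives for the pair $T_i, T_j$.

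For (b): I would verify that $\Pb(T_i) = T_{i+1}$, with indices read modulo $p$, so that all of $T_1,\ldots,T_p$ share a single $\Pb$-orbit (which also confirms the least-period-$p$ hypothesis of the proposition). The key computation uses Property~1 at the endpoints: since $T_\beta(z^{m+i}) = z^{m+i+1}$, the word $\pb$ applied to the label of $\tau_-^{m+i}$ ends in the label of $\tau_-^{m+i+1}$, and $\pb$ applied to the label of $\tau_+^{m+i}$ begins with the label of $\tau_+^{m+i+1}$. Combining this with the tile-level placement rule for $\Pb$, the rightmost tile of $\Pb(\tau_-^{m+i} - z^{m+i})$ is $\tau_-^{m+i+1} - z^{m+i+1}$ and the leftmost tile of $\Pb(\tau_+^{m+i} - z^{m+i})$ is $\tau_+^{m+i+1} - z^{m+i+1}$, so $B_0[\Pb(T_i)] = B_0[T_{i+1}]$. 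Both tilings are $\Pb^p$-invariant and determined by their central patches, so they coincide.

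For (a): I would observe that $B_0[T_i] = \{\tau_-^{m+i} - z^{m+i}, \tau_+^{m+i} - z^{m+i}\}$, whereas $B_0[T_k^0]$ places $\tau_1$ immediately to the right of the origin. The prototile $\tau_+^{m+i}$ has $\min = z^{m+i}$, which is strictly positive (because $m > 0$ in the regime where the $T_i$ are defined, so the $T_\beta$-periodic orbit $\{z^{m+1},\ldots,z^{m+p}\}$ avoids $0$); thus its prototile label lies in $\{2,\ldots,m+p\}$, different from the label of $\tau_1$. Therefore $T_i \neq T_k^0$ for any $k$.

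With both alternatives of Proposition~\ref{not asymptotic} excluded, the supposition that $T_i$ and $T_j$ are asymptotic on $\R^+$ for some $i \neq j$ leads to a contradiction, and the corollary follows. The one step requiring any care is the verification $\Pb(T_i) = T_{i+1}$, but this is unpacking the tile-substitution formula and presents no genuine obstacle.
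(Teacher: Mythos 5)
Your proof is correct and is exactly the intended derivation: the paper states Corollary~\ref{cor} without proof as an immediate consequence of Proposition~\ref{not asymptotic}, and ruling out both alternatives is done precisely as you do --- the identity $\Pb(T_i)=T_{i+1}$ (from $T_\beta(z^{m+i})=z^{m+i+1}$ and the definition of $\Pb$) puts all the $T_i$ in one $\Pb$-orbit of least period $p$, while $z^{m+i}>0$ (since $m>0$) shows no $T_i$ lies in $\{T_k^0\}$.
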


\begin{lemma}\label{stably equiv 2} If $m>0$ then $T_i\sim_s T_j\sim_s T_k^0$ densely on $\R^+$ for all $i,j,k\in\{1,\ldots,p\}$.
\end{lemma}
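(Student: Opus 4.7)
The proof proceeds by combining Lemma \ref{stably equiv 1} with two structural facts about the tilings $T_i$ and $T_k^0$ and a bootstrap argument. The case $p=1$ is immediate from Lemma \ref{stably equiv 1} since there is then a single $T_1$ and a single $T_1^0$. Assume throughout that $p>1$.

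Two structural facts are key. First, $T_k^0|_{[0,\infty)}=\bigcup_{n\ge0}\Pb^{np}(\tau_1)$, independent of $k$; the translated tile $\tau_-^{m+k}-z^{m+k}$ has support in $(-\infty,0]$ while the $\Pb$-iterates of $\tau_1$ have support in $[0,\infty)$. Hence all $T_k^0$ agree pointwise on $[0,\infty)$, and in particular $T_j^0\sim_s T_k^0$ densely on $\R^+$ for all $j,k$. Second, $\Pb$ acts as a cyclic permutation $\pi$ of order $p$ on each of $\{T_1,\dots,T_p\}$ and $\{T_1^0,\dots,T_p^0\}$, induced by the $T_\beta$-action on the periodic orbit $\{z^{m+1},\dots,z^{m+p}\}$, and $\sim_s$-density-on-$\R^+$ is $\Pb$-equivariant (dilation of $\R^+$ by $\beta>1$ preserves density).

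Given these, it suffices to produce a single cross-family relation $T_{i_0}\sim_s T_{k_0}^0$ densely on $\R^+$. Indeed, the first structural fact upgrades this to $T_{i_0}\sim_s T_l^0$ for all $l$; applying $\Pb^n$ and using the second fact yields $T_{\pi^n(i_0)}\sim_s T_{\pi^n(l)}^0$ for all $n,l$, and since $\pi$ is a $p$-cycle we obtain $T_i\sim_s T_k^0$ densely on $\R^+$ for every $i,k$. Transitivity through the common $T_k^0$ class then delivers $T_i\sim_s T_j$ densely on $\R^+$ for all $i,j$, completing the bootstrap.

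Producing the cross-family relation is the main work. Inspecting the proof of Lemma \ref{stably equiv 1}, Case 1 (where the critical configuration $\mathcal{C}_l'$ has size one) directly outputs $T_n^0\sim_s T_q$ densely on $\R^+$, finishing the argument. The remaining contingency is that the choice of parameters in Lemma \ref{stably equiv 1} only lands us in Case 2, supplying $T_{i^*}\sim_s T_{j^*}$ with $i^*\ne j^*$ and nothing across families. In that situation we iterate: the new $\sim_s$-relation collapses two previously $\sim_{srp}$-classes among the pairwise disjoint tilings $S_1,\dots,S_r$ of Lemma \ref{stably equiv 1}, after which the configuration-counting argument can be rerun. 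Corollary \ref{cor} guarantees that $T_{i^*}$ and $T_{j^*}$ are not asymptotic on $\R^+$, so density of $\sim_s$ on $\R^+$ produces coincidence points $t_n\to\infty$ at which the tiles of $T_{i^*}-t_n$ and $T_{j^*}-t_n$ at $0$ disagree; monotonicity Properties 1--3 of $\mathcal{L}(\pb)$ then force one of those tiles to be of type $1$, which is precisely the ``signature'' of the $T_k^0$ tilings and supplies the needed bridge. Iterating must terminate in Case 1 because each step strictly enlarges the known $\sim_s$-classes on the finite set $\{T_1,\dots,T_p\}$.

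The hard part will be making the Case 2 iteration fully rigorous: the subtlety is that identifying $\sim_s$-related tilings does not automatically reduce coincidence rank (disjointness and $\sim_{srp}$ can coexist with $\sim_s$), so one must track carefully which parts of Lemma \ref{stably equiv 1}'s configuration argument survive the identification. The monotonicity properties, together with Corollary \ref{cor}, are exactly the ingredients that control this, by tightly constraining how distinct tiles can occur consecutively in an allowed patch and thereby forcing the appearance of $\tau_1$-type tiles at $\sim_s$-coincidence points.
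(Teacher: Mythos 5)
Your two structural facts, the reduction to a single cross-family relation $T_{i_0}\sim_s T_{k_0}^0$, and your core bridging mechanism --- Corollary \ref{cor} plus Property 2 forcing a type-$1$ tile where $T_{i^*}$ and $T_{j^*}$ first disagree after a shared tile --- are exactly the paper's argument. Where you go astray is in wrapping this bridge inside an iteration through the case analysis of Lemma \ref{stably equiv 1}. That iteration is both unnecessary and, as you yourself concede, not rigorous: ``rerunning the configuration-counting argument'' after identifying $\sim_s$-related tilings has no clear meaning, since the tilings $S_1,\ldots,S_r$ remain pairwise disjoint and pairwise $\sim_{srp}$ no matter what $\sim_s$ relations hold among the $T_i$, so nothing in the count $\sharp\mathcal{C}_l'\in\{1,2\}$ changes. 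The point you are missing is that no iteration is needed: the bridge applies directly to the \emph{conclusion} of Lemma \ref{stably equiv 1}, not to its proof. Given any $i\ne j$ with $T_i\sim_s T_j$ densely on $\R^+$ (from whichever case), Corollary \ref{cor} says they are not asymptotic on $\R^+$; since both are fixed by $\Pb^p$ and share tiles arbitrarily far out in $\R^+$ (use $T\sim_s T'\Leftrightarrow B_0[\Pb^k(T)]=B_0[\Pb^k(T')]$ for some $k$, with $k$ a multiple of $p$), there are patches $\{\tau,\tau'\}\subset T_i$ and $\{\tau,\tau''\}\subset T_j$ supported in $\R^+$ with $\tau'\ne\tau''$ and $min(\tau')=min(\tau'')=max(\tau)$, and Property 2 makes one of them, say $\tau'$, a translate of $\tau_1$. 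That is already the cross-family relation; there is nothing to iterate.

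One step you elide does need saying. Knowing $\tau_1\sim_s\tau_r-min(\tau_r)$ densely on $[0,\epsilon)$ is not yet $T_k^0\sim_s T_l$ densely near $0$, because $T_l|_{\R^+}$ begins with $\tau_+^{m+l}-z^{m+l}$, which need not be $\tau_r-min(\tau_r)$. One must first apply $\Pb^{qp}$ for suitable $q$ so that the initial tile of $\Pb^{qp}(\tau_r-min(\tau_r))$ becomes $\tau_l-min(\tau_l)$ with $l$ in the periodic range $\{m+1,\ldots,m+p\}$ and in phase with $r$; only then does $\tau_1\sim_s\tau_l-min(\tau_l)$ densely on a small initial interval transfer, via invariance of both tilings under $\Pb^{np}$, to a relation between $T_k^0$ and the corresponding $T_{l-m}$ densely on all of $\R^+$. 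After that, your bootstrap (all $T_k^0$ agree on $\R^+$, plus $\Pb$-equivariance of the cyclic indexing) finishes the proof exactly as in the paper.
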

\begin{proof} If $p=1$ this is the second statement of Lemma \ref{stably equiv 1}. If $p>1$, let $i\ne j$ be so that $T_i\sim_s T_j$ densely on $\R^+$ (from the first statement of Lemma \ref{stably equiv 1}). From Corollary \ref{cor} we know that $T_i$ and $T_j$ are not asymptotic on $\R^+$. Thus there are patches $\{\tau,\tau'\}\subset T_i$ and $\{\tau,\tau''\}\subset T_j$ with support contained in $\R^+$, $\tau'\ne\tau''$, and $min(\tau')=min(\tau'')=max(\tau)$.
From Property 2, one of $\tau',\tau''$ is of type 1: say $\tau'=\tau_1+t$ and $\tau''=\tau_r-min(\tau_r)+t$, with $r\in\{2,\ldots,m+p\}$. Let $l\in\{m+1,\ldots,m+p\}$ be in phase with $r$ (that is, $p|(l-r)$). Then $\Pb^{qp}(\tau'')=\{\tau_l-min(\tau_l)+\beta^{qp} t,\ldots\}$ for some $q$. Let $\epsilon=min\{length(\tau_1),length(\tau_r),\beta^{-qp}\cdot length(\tau_l)\}$. Then $\tau_1\sim_s\tau_r-min(\tau_r)\sim_s\tau_l-min(\tau_l)$ densely on $[0,\epsilon)$. It follows that $T_l\sim_s T_k^0$ densely on $\R^+$ for all $k\in\{1,\ldots,p\}$. Thus, with subscripts taken mod$(p)$, $T_{l+n}=\Pb^n(T_l)\sim_s \Pb^n(T_k^0)=T_{k+n}^0$ densely on $\R^+$ for all $n\in\N$.
\end{proof}

The {\em stable equivalence relation}, $\approx_s$, is defined for a substitution tiling space $\Omega_{\phi}$ by $T\approx_sT'$ if and only if $T\sim_sT'$ densely on $\R$ and $T\sim_{srp}T'$. 

\begin{cor} \label{approx nontrivial} If $m>0$ then $T_j\approx_s T_j^0$ for each $j\in\{1,\ldots,p\}$.
\end{cor}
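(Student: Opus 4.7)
The plan is to verify separately the two conditions in the definition of $\approx_s$: that $T_j \sim_s T_j^0$ densely on $\R$, and that $T_j \sim_{srp} T_j^0$. The second was already recorded in the remark immediately after the definition of the tilings $T_j, T_j^0$, so it costs nothing. Lemma \ref{stably equiv 2} supplies the first on a dense subset of $\R^+$, so the remaining work is to produce stable equivalence for every $t<0$ (or at least a dense subset of it).

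The structural fact I would lean on is that $T_j$ and $T_j^0$ agree tile-for-tile on $(-\infty,0]$. Both tilings are generated by iterating $\Pb^p$ on a seed containing the tile $\tau_-^{m+j}-z^{m+j}$ terminating at the origin, and because $z^{m+j}$ is $T_\beta$-periodic of period $p$ the substitution $\pb^p$ applied to the label of $\tau_-^{m+j}$ ends in that same label. Hence $\bigcup_{n\ge 0}\Pb^{np}(\tau_-^{m+j}-z^{m+j})$ is a nested, well-defined tiling of $(-\infty,0]$ common to both $T_j$ and $T_j^0$; the two tilings can only disagree to the right of $0$, where one extends by the $\Pb^p$-orbit of $\tau_+^{m+j}-z^{m+j}$ and the other by the $\Pb^p$-orbit of $\tau_1$.

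With this left-half-line agreement in hand, let $L$ denote the maximum prototile length. For $t<-L$, any tile of $T_j-t$ meeting $0$ is the translate by $-t$ of a tile of $T_j$ whose support lies in $(-\infty,0)$; such a tile is therefore also present in $T_j^0-t$, so $B_0[T_j-t]=B_0[T_j^0-t]$ and $T_j-t\sim_s T_j^0-t$ holds trivially. For $t\in[-L,0)$ I would use the intertwining $\Pb(T-t)=\Pb(T)-\beta t$ together with $\Pb^p T_j=T_j$ and $\Pb^p T_j^0=T_j^0$ to identify $\Pb^{np}(T_j-t)=T_j-\beta^{np}t$ (and similarly for $T_j^0$); choosing $n$ so large that $\beta^{np}t<-L$ reduces to the previous case. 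This yields $T_j\sim_s T_j^0$ at every $t\in\R^-$, which, combined with the density on $\R^+$ from Lemma \ref{stably equiv 2}, gives density on $\R$ and completes the verification. The main subtlety I anticipate is the boundary bookkeeping at $0$: confirming that once $t$ has been pushed far enough to the left, every tile meeting the origin has its preimage supported strictly inside the half-line where $T_j$ and $T_j^0$ coincide. The uniform length bound $L$ controls this cleanly, so beyond that observation the argument is a short reduction.
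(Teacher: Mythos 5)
Your argument is correct and is essentially the proof the paper intends: the corollary is stated without proof precisely because it follows from Lemma \ref{stably equiv 2} (density on $\R^+$), the earlier observation that $T_k^0\sim_{srp}T_k$, and the fact that $T_j$ and $T_j^0$ share the common left seed $\tau_-^{m+j}-z^{m+j}$ and hence coincide on $(-\infty,0]$, which gives $B_0[T_j-t]=B_0[T_j^0-t]$ for all $t<0$. (In fact your reduction for $t\in[-L,0)$ is not even needed, since no tile of either tiling straddles the origin, so every tile containing a negative $t$ already lies in the common half-line.)
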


The following is Theorem 5 of \cite{B3}.

\begin{Theorem} \label{pds means stab equiv} If $\phi$ is a primitive, non-periodic Pisot substitution, then $(\Ob,\R)$ has pure discrete spectrum if and only if stable equivalence equals strong regional proximality on $\Ob$. 
\end{Theorem}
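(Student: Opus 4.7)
The plan is to prove the biconditional by handling the two directions with quite different arguments; the reverse direction admits a clean proof, while the forward direction is the more subtle part.

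For the reverse direction ($\Leftarrow$), I argue by contrapositive. Suppose $cr(\phi) > 1$ and construct $T_1, T_2 \in \Omega_\phi$ with $T_1 \sim_{srp} T_2$ but $T_1 \not\approx_s T_2$, contradicting $\approx_s = \sim_{srp}$. By item (3) of the outline there are pairwise disjoint, pairwise $\sim_{srp}$-related tilings $T_1, \ldots, T_r \in \Omega_\phi$ with $r = cr(\phi) \ge 2$; take these $\Phi$-periodic, and (after replacing $\Phi$ by a suitable power) $\Phi$-fixed. Restrict attention to $T_1, T_2$. If $T_1 \approx_s T_2$, there exists $t \in \R$ with $T_1 - t \sim_s T_2 - t$, and by the characterization of stable equivalence cited from \cite{BO} some $k \in \N$ satisfies $B_0[\Phi^k(T_1 - t)] = B_0[\Phi^k(T_2 - t)]$. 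Using the intertwining $\Phi^k(T_i - t) = \Phi^k T_i - \beta^k t = T_i - \beta^k t$, this reduces to
\[
B_0[T_1 - \beta^k t] = B_0[T_2 - \beta^k t],
\]
so that $T_1$ and $T_2$ share the tile over the point $\beta^k t$. But such a shared tile would lie in $T_1 \cap T_2 = \emptyset$, a contradiction.

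For the forward direction ($\Rightarrow$), assume $(\Omega_\phi, \R)$ has pure discrete spectrum, so $cr(\phi) = 1$ by item (2). Since $\approx_s \subseteq \sim_{srp}$ holds by definition, I only need the reverse inclusion $\sim_{srp} \subseteq \approx_s$. Take $T \sim_{srp} T'$; if $T = T'$ we are done, and otherwise by item (3) with $r = cr(\phi) = 1$, $T$ and $T'$ cannot be disjoint in tile sets, so they share a tile $\tau$. For $t$ in the interior of $spt(\tau)$, both $T - t$ and $T' - t$ have $\tau - t$ as their tile at $0$, whence $T - t \sim_s T' - t$ with $k = 0$. Item (4) provides openness of $\sim_s$ under translation; combined with the intertwining $\Phi(T - t) = \Phi T - \beta t$, a propagation argument through the substitution hierarchy of $\Phi$-iterates and $\Phi^{-1}$-preimages extends density of $\sim_s$-equivalence from $spt(\tau)$ to a dense subset of all of $\R$, yielding $T \approx_s T'$.

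The main obstacle is precisely this propagation step in the forward direction: extending density of $\sim_s$ from the support of a single shared tile to all of $\R$, which must exploit the fine structure of $\sim_{srp}$-classes in Pisot tiling spaces with $cr = 1$ (in particular, that such classes must share tiles on sets whose $\beta^{\pm k}$-rescalings fill out $\R$ densely). The reverse direction, despite being the main content for the paper's application in item (8) of the outline, is essentially a one-line computation once the $\Phi$-fixed witnesses from item (3) are in hand.
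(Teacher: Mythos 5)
A preliminary remark: the paper does not actually prove this statement — it is imported verbatim as Theorem 5 of \cite{B3} — so there is no internal argument to compare yours against line by line; your proposal has to stand on its own. Your reverse direction does stand: given $cr(\phi)\ge 2$, the $\Phi$-periodic, pairwise disjoint, pairwise strong regionally proximal tilings supplied by \cite{BK} (exactly as invoked at the start of the proof of Lemma \ref{stably equiv 1}) give $T_1\sim_{srp}T_2$ with $T_1\cap T_2=\emptyset$, and the computation $B_0[\Phi^k(T_1-t)]=B_0[T_1-\lambda^k t]$ ($\lambda$ the inflation) shows that $T_1-t\sim_s T_2-t$ for even a single $t$ would force a shared tile. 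You should say explicitly that replacing $\Phi$ by a power does not change $\sim_s$; this is immediate from the characterization $T\sim_s T'$ iff $B_0[\Phi^k(T)]=B_0[\Phi^k(T')]$ for some, hence every larger, $k$.

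The forward direction has a genuine gap, precisely where you flag one. From $cr(\phi)=1$ and $T\sim_{srp}T'$ you correctly obtain a shared tile $\tau$ and hence $T-t\sim_s T'-t$ for all $t\in int(spt(\tau))$; but $\approx_s$ requires this for a set of $t$ dense in all of $\R$, and ``propagation through the substitution hierarchy'' cannot deliver it: applying $\Phi^k$ to a shared tile produces agreement only over the $\lambda^k$-dilates of $spt(\tau)$, and applying $\Phi^{-1}$ shrinks the region of agreement, so no amount of iteration forces coincidence near an arbitrary $t_0\in\R$. The missing ingredient is a compactness argument of the kind the paper itself deploys in Case 1 of Lemma \ref{stably equiv 1} and again in Lemma \ref{dense}: the set $U:=\{t: T-t\sim_s T'-t\}$ is open, and if it missed an interval $J$ with center $c$, then for every $k$ the tilings $\Phi^k(T)$ and $\Phi^k(T')$ would share no tile meeting the interior of $\lambda^k J$; choosing $k_i\to\infty$ with $\Phi^{k_i}(T-c)\to W$ and $\Phi^{k_i}(T'-c)\to W'$ yields $W\cap W'=\emptyset$ while $W\sim_{srp}W'$ (the relation $\sim_{srp}$ being closed and preserved by translation and by $\Phi$), i.e., two disjoint strong regionally proximal tilings, contradicting $cr(\phi)=1$ via item (3). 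With that lemma inserted your argument closes; without it the forward implication — which is the substantive half of the theorem — is not established.
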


Given a morphism $\gamma$ from letters of an alphabet $\mathcal{A}$ to nonempty words on an alphabet $\mathcal{A}'$, let $i_{\gamma},t_{\gamma}:\mathcal{A}\to\mathcal{A}'$ be the initial and terminal letter maps: If $\gamma(i)=a\cdots z$ then $i_{\gamma}(i)=a$ and $t_{\gamma}(i)=z$.

The stable equivalence relation is pushed down to a relation $R$ on the rose $\mathcal{R}_{\phi}$ by $xRy$ if and only if there are $T_1,\ldots,T_n, T_1',\ldots,T_n'\in\Omega_{\phi}$ with $T_i\approx_sT_i'$ for $i=1,\ldots, n$, $g_{\phi}(T_{i+1})=g_{\phi}(T_i')$ for $i=1,\ldots, n-1$, $g_{\phi}(T_1)=x$, and $g_{\phi}(T_n')=y$. The following lemma is proved in \cite{B1} under the slightly stronger assumption that either $t_{\phi}$ or $i_{\phi}$ is eventually constant (see Lemmas3.5 and 3.6 there), but the proof goes through virtually without change under the condition we assume here of
 dense terminal or dense initial stable relation.
 
\begin{lemma} \label{R} Suppose that $\phi$ is a nonperiodic Pisot substitution with the properties: $(\Omega_{\phi},\R)$ does not have pure discrete spectrum and there is $\epsilon>0$ so that either $\tau-min(\tau)\sim_s \tau'-min(\tau')$ densely on $[0,\epsilon)$ for all tiles $\tau,\tau'$, or $\tau-max(\tau)\sim_s \tau'-max(\tau')$ densely on $(-\epsilon,0]$ for all tiles $\tau,\tau'$. Then $R$ is a closed equivalence relation on $\mathcal{R}_{\phi}$ with boundedly finite equivalence classes.
\end{lemma}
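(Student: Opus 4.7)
The plan is to verify in sequence that $R$ is an equivalence relation, that it is closed, and that its equivalence classes are uniformly bounded in size, adapting the template of \cite{B1} (Lemmas 3.5 and 3.6). The strategy reduces each property of $R$ on $\mathcal{R}_\phi$ to a corresponding property of $\approx_s$ on $\Omega_\phi$, and uses the coincidence rank $cr(\phi)$ to bound class sizes.

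First I would verify that $\approx_s$ is itself an equivalence relation on $\Omega_\phi$. Reflexivity and symmetry are immediate; transitivity of $\sim_{srp}$ is standard, and transitivity of the ``$\sim_s$ densely on $\R$'' component follows from Baire, since (as noted in the text) the relevant $t$-set is in fact open dense, and the intersection of two open dense subsets of $\R$ is dense. Because each $\approx_s$-class lies inside a single $\sim_{srp}$-class, its pairwise disjoint members number at most $cr(\phi)$, so $\approx_s$-classes have cardinality uniformly bounded in terms of $cr(\phi)$. Given that $\approx_s$ is an equivalence relation, $R$ inherits reflexivity (constant chain), symmetry (reversed chain), and transitivity (concatenation of chains) directly from the definition.

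The heart of the proof is the bounded finiteness of $R$-classes, where the hypothesis plays the decisive role. The argument is that, starting from $T_1$ with $g_\phi(T_1) = x$, each chain step either (a) moves within an $\approx_s$-class of size at most $cr(\phi)$, or (b) stays within a $g_\phi$-fiber, which under the dense initial (respectively terminal) stable equivalence hypothesis is effectively a single $\approx_s$-class near the shared vertex. Concretely, the hypothesis implies that any two tilings meeting at a common tile vertex at the origin are $\sim_s$ densely on a one-sided $\epsilon$-interval, and combined with $\sim_{srp}$ this upgrades to $\approx_s$. After some uniformly bounded number $N$ of chain steps one must therefore revisit a previously obtained $g_\phi$-image, giving a bound on the $R$-class of $x$ depending only on $cr(\phi)$ and the size $|\mathcal{A}_\phi|$ of the alphabet.

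Closedness of $R$ then follows by compactness: given sequences $x_n \to x$ and $y_n \to y$ with $x_n R y_n$, the uniform chain-length bound permits extracting convergent subsequences of witnessing chains in $\Omega_\phi^{2N}$, and the chain relations pass to the limit since $\sim_{srp}$ is closed and the dense stable-equivalence hypothesis rescues the limit of the ``$\sim_s$ densely on $\R$'' piece. The main obstacle I expect is the careful handling of chains crossing the wedge point of $\mathcal{R}_\phi$, where $g_\phi$ is genuinely discontinuous: the dense stable-equivalence hypothesis is exactly what is needed to push the chain relation across this discontinuity and keep the limit chain valid, and verifying this step in detail is the technical crux that mirrors the work of \cite{B1} in the eventually-constant initial/terminal letter case.
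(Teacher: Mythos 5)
The paper does not actually spell out a proof of this lemma; it defers to Lemmas 3.5--3.6 of \cite{B1} and asserts the argument carries over. Your sketch reproduces the right outer shell (chain definition gives an equivalence relation; the crux is bounded finiteness; closedness should follow once the classes are understood), but the step that carries all the content is missing, and two of your assertions there are not correct. First, a $g_{\phi}$-fiber is not ``effectively a single $\approx_s$-class'': two tilings $T,T'$ with $g_{\phi}(T)=g_{\phi}(T')$ agree only in the tile containing the origin and need not be strong regionally proximal at all, so they are generally not $\approx_s$-related; this is precisely why the chain relation can branch wildly and why the lemma is nontrivial. Second, ``after $N$ steps one must revisit a previously obtained $g_{\phi}$-image'' does not bound the class: revisiting an image does not stop the class from growing, since each new tiling chosen in a fiber can contribute a new $\approx_s$-class and hence new images. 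What is actually needed is a quantitative statement about a \emph{single} $\approx_s$-step: by the Meyer property the displacement between the origin's position in the tile of $T$ and in the tile of $T'$ (for $T\sim_{srp}T'$) lies in a fixed finite set $D$, but under concatenation $D$ generates an infinite orbit unless one shows the only displacements realized by $\approx_s$-pairs are the trivial ones compatible with the petal structure --- i.e.\ that $\approx_s$ can only identify points of $\mathcal{R}_{\phi}$ lying at equal distance from the initial (resp.\ terminal) vertices of their petals. That is exactly what the dense initial (resp.\ terminal) stable relation hypothesis is used to prove, and it is the step your proposal does not address; once it is in place each $R$-class sits inside $\{(s,i):i\in\mathcal{A}_{\phi}\}$ for a fixed $s$ and has cardinality at most $|\mathcal{A}_{\phi}|$.

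Two further points. Your closedness argument leans on passing $\approx_s$ to a limit of witnessing chains, but $\approx_s$ is not a closed relation: $\sim_{srp}$ is closed, while ``$T-t\sim_s T'-t$ for a dense set of $t$'' is not preserved under limits in general. Closedness of $R$ comes instead from the explicit description of the classes obtained above (a finite union of graphs of isometries between subarcs of petals is closed), not from soft compactness. Finally, the deduction ``pairwise disjoint members number at most $cr(\phi)$, so $\approx_s$-classes are uniformly finite'' is a non sequitur as written --- members of a class need not be pairwise disjoint; the uniform finiteness you want comes from the bounded finiteness of the fibers of $\pi_{max}$ (equivalently of $\sim_{srp}$-classes), which holds for Pisot substitutions but should be invoked explicitly.
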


Recall (definition \ref{def of f}) that $f_{\phi}:\mathcal{R}_{\phi}\to\mathcal{R}_{\phi}$ is the map induced by $\Phi:\Omega_{\phi}\to\Omega_{\phi}$ (so that $f_{\phi}\circ g_{\phi}=g_{\phi}\circ\Phi$). Then $xRy\implies f_{\phi}(x)Rf_{\phi}(y)$ and there is an induced map $\tilde{f}_{\phi}:\mathcal{R}_{\phi}/R\to\mathcal{R}_{\phi}/R$. Under the assumptions of Lemma \ref{R}, $\mathcal{R}_{\phi}/R$ is a wedge of circles, $\tilde{f}_{\phi}$ defines a substitution $\tilde{\phi}$, and we may identify $\mathcal{R}_{\phi}/R$ with $\mathcal{R}_{\tilde{\phi}}$. If $\mathcal{A}$ and $\tilde{\mathcal{A}}$ are the alphabets for $\phi$ and $\tilde{\phi}$ then the quotient map from $\mathcal{R}_{\phi}$ to $\mathcal{R}_{\tilde{\phi}}$ induces a morphism $\alpha:\mathcal{A}\to\tilde{\mathcal{A}}^*$ so that $\alpha\circ\phi=\tilde{\phi}\circ\alpha$. 

The following is distilled from \cite{B1} (see, in particular, Theorem 3.9 there).

\begin{Theorem} \label{reduction} Suppose that $\phi$ is a nonperiodic Pisot substitution with the properties: $(\Omega_{\phi},\R)$ does not have pure discrete spectrum and there is $\epsilon>0$ so that either $\tau-min(\tau)\sim_s \tau'-min(\tau')$ densely on $[0,\epsilon)$ for all tiles $\tau,\tau'$; or $\tau-max(\tau)\sim_s \tau'-max(\tau')$ densely on $(-\epsilon,0]$ for all tiles $\tau,\tau'$. Then $\tilde{\phi}$ is a nonperiodic Pisot substitution and 
$(\Omega_{\tilde{\phi}},\R)$ does not have pure discrete spectrum.
\end{Theorem}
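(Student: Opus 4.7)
The plan is to obtain the three conclusions about $\tilde{\phi}$---Pisot, nonperiodic, and failure of pure discrete spectrum---by analyzing the factor map $\bar{\alpha}:\Omega_{\phi}\to\Omega_{\tilde{\phi}}$ induced by the morphism $\alpha:\mathcal{A}\to\tilde{\mathcal{A}}^*$ at the level of tilings: each tile $\tau_a\in T$ is replaced by the patch spelled by the word $\alpha(a)$, with tile lengths in $\Omega_{\tilde{\phi}}$ assigned via the $A$-image of the Perron eigenvector of $M_{\phi}$ (here $A$ is the incidence matrix of $\alpha$, and positivity of these lengths follows from nonnegativity of $A$ together with each letter of $\tilde{\mathcal{A}}$ appearing in some $\alpha(a)$). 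First I would verify that $\bar{\alpha}$ is a continuous surjection intertwining translation and substitution, using the semi-conjugacy $AM_{\phi}=M_{\tilde{\phi}}A$ and the rose-level commutative diagram that is built into the construction of $\tilde{\phi}$ from $R$.

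The Pisot property of $\tilde{\phi}$ then follows from the same intertwining: $\beta$ is an eigenvalue of $M_{\tilde{\phi}}$ with positive eigenvector $Av$ (where $v>0$ is the Perron eigenvector of $M_{\phi}$), and since $\tilde{\phi}$ inherits primitivity from $\phi$ via $\alpha$, Perron--Frobenius identifies $\beta$ as the Perron eigenvalue of $M_{\tilde{\phi}}$. Thus the inflation of $\tilde{\phi}$ is the Pisot number $\beta$.

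The remaining two conclusions both rely on the identification $\Omega_{\phi}/{\approx_s}\simeq\Omega_{\tilde{\phi}}$ as topological dynamical systems (with both the $\R$-action and the substitution action). That $\bar{\alpha}$ factors through $\approx_s$ is routine: $T\approx_s T'$ gives $g_{\phi}(T-t)\,R\,g_{\phi}(T'-t)$ densely in $t$, hence $\bar{\alpha}(T-t)=\bar{\alpha}(T'-t)$ densely, and therefore everywhere by continuity. Showing that the resulting descent $\Omega_{\phi}/{\approx_s}\to\Omega_{\tilde{\phi}}$ is a homeomorphism is the central technical step: surjectivity follows from minimality of $(\Omega_{\tilde{\phi}},\R)$, while injectivity---that $\bar{\alpha}(T)=\bar{\alpha}(T')$ forces $T\approx_s T'$---is exactly where the dense initial/terminal stable tile-hypothesis is essential, since via Lemma \ref{R} it guarantees that $R$ has boundedly finite classes and that chains of $\approx_s$-identifications faithfully encode $\approx_s$ itself. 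Granting this identification, nonperiodicity of $\tilde{\phi}$ follows by contradiction: a periodic $\tilde{\phi}$ would make $(\Omega_{\tilde{\phi}},\R)$ equicontinuous, so $\bar{\alpha}$ would be an equicontinuous factor of $(\Omega_{\phi},\R)$; by maximality of $\pi_{max}:\Omega_{\phi}\to\hat{\T}^d_{\phi}$, this would force $\bar{\alpha}$ to identify every $\sim_{srp}$-pair, which combined with the always-true $\approx_s\subseteq\sim_{srp}$ would yield $\approx_s=\sim_{srp}$ on $\Omega_{\phi}$, contradicting the hypothesized failure of pure discreteness via Theorem \ref{pds means stab equiv}.

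Finally, failure of pure discrete spectrum for $(\Omega_{\tilde{\phi}},\R)$ follows by a second application of Theorem \ref{pds means stab equiv}: the identification transports the maximal equicontinuous factor to $\hat{\T}^d_{\phi}$ (since $\pi_{max}$ factors through $\bar{\alpha}$), so $\sim_{srp}$ has a.e.\ fiber cardinality $cr(\phi)>1$ on $\Omega_{\tilde{\phi}}$, while $\approx_s$ is trivial on $\Omega_{\tilde{\phi}}$ by the maximality of the $\approx_s$-collapse (item (6) of the introduction, as established in \cite{B1}). Hence $\approx_s\subsetneq\sim_{srp}$ on $\Omega_{\tilde{\phi}}$ and the conclusion follows. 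The main obstacle I anticipate is precisely the injectivity of the descent $\Omega_{\phi}/{\approx_s}\to\Omega_{\tilde{\phi}}$, together with the dual claim that $\approx_s$ becomes trivial on the quotient; both hinge on a careful transitive-closure analysis of the rose-level relation $R$ that is enabled by the dense stable tile-hypothesis via Lemma \ref{R}, and amount to unpacking the machinery distilled from \cite{B1}.
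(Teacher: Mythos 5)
The paper does not actually prove this theorem: it is stated as ``distilled from \cite{B1} (see, in particular, Theorem 3.9 there),'' so there is no in-text argument to compare yours against line by line. Your outline is a faithful reconstruction of the strategy behind that citation --- build the factor map $\bar{\alpha}:\Omega_{\phi}\to\Omega_{\tilde{\phi}}$ from the morphism $\alpha$, identify $\Omega_{\phi}/{\approx_s}$ with $\Omega_{\tilde{\phi}}$, read off the inflation from $M_{\alpha}M_{\phi}=M_{\tilde{\phi}}M_{\alpha}$ and Perron--Frobenius, and derive nonperiodicity and failure of pure discreteness from Theorem \ref{pds means stab equiv}. The Pisot and nonperiodicity portions of your argument are essentially complete and correct.

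The gap is that the two claims carrying all the weight are asserted rather than proved. First, injectivity of the descended map $\Omega_{\phi}/{\approx_s}\to\Omega_{\tilde{\phi}}$: you correctly flag this as the central technical step, but ``a careful transitive-closure analysis of $R$ enabled by Lemma \ref{R}'' is a description of where the work lives, not the work itself; Lemma \ref{R} gives you that $R$ is closed with boundedly finite classes, and one still has to show that two tilings whose $R$-collapsed images coincide are genuinely $\sim_s$ densely and $\sim_{srp}$, which is where the dense initial/terminal stability hypothesis actually gets used. Second, and more seriously, your route to non-discreteness of $(\Omega_{\tilde{\phi}},\R)$ invokes ``$\sim_{srp}$ has a.e.\ fiber cardinality $cr(\phi)>1$ on $\Omega_{\tilde{\phi}}$'' and ``$\approx_s$ is trivial on $\Omega_{\tilde{\phi}}$,'' justified by item (6) of the introduction. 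Item (6) is a summary of the conclusion of this very theorem (together with $cr(\tilde{\phi})=cr(\phi)$) as established in \cite{B1}, so citing it here is circular. What needs to be shown is that the $cr(\phi)$ pairwise disjoint, pairwise strongly regionally proximal tilings in a generic fiber of $\pi_{max}$ remain distinct and pairwise disjoint after applying $\bar{\alpha}$ --- i.e.\ that the coincidence rank survives the quotient --- and that argument is absent. Once $cr(\tilde{\phi})=cr(\phi)>1$ is in hand, non-discreteness follows directly from item (2) without any appeal to triviality of $\approx_s$ on the quotient, so that part of your argument is also more elaborate than necessary.
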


To apply Theorem \ref{reduction} to $\beta$-substitutions we will require the following lemma.

\begin{lemma} \label{initial stability} If $m>0$, there is $\epsilon>0$ so that $\tau-min(\tau)\sim_s\tau'-min(\tau')$ densely on $[0,\epsilon)$ for all tiles $\tau,\tau'$  for $\pb$.
\end{lemma}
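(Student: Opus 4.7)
The plan is to show that $\tau_1 \sim_s \tau - \min(\tau)$ densely on some common interval $(0, \epsilon)$ for every prototile $\tau$ of $\pb$, and then conclude the lemma by transitivity of $\sim_s$.

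For prototiles of the form $\tau_+^{m+k}$ with $k \in \{1, \ldots, p\}$, this will follow immediately from Lemma \ref{stably equiv 2}: $T_k$ has $\tau_+^{m+k} - z^{m+k}$ at the origin while $T_k^0$ has $\tau_1$ there, and for $t$ smaller than the minimum of $|\tau_+^{m+k}|$ and $|\tau_1|$ the relation $T_k - t \sim_s T_k^0 - t$ depends only on the prototile at the origin of each tiling (since $B_0[\Pb^n(T-t)]$ lies in the interior of the substitution image of the origin tile, so only the prototile at origin matters). Thus the denseness on $\R^+$ of $T_k \sim_s T_k^0$ transfers to $\tau_+^{m+k} - z^{m+k} \sim_s \tau_1$ densely on $(0, \epsilon_k)$ for some $\epsilon_k > 0$.

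The real work is to handle the remaining, pre-periodic starters $\tau_+^l$, $l \in \{2, \ldots, m\}$. My key observation is that $T_\beta$ maps each vertex $z^l$ to the vertex $z^{l+1}$, so by induction the first letter of $\pb^k$ applied to the type of $\tau_+^l$ is the type of $\tau_+^{l+k}$; choosing $k = m+1-l$ gives the type of $\tau_+^{m+1}$. Since $T_\beta(0) = 0$, the first letter of $\pb^k(1)$ is always $1$. Hence for any tilings $T_l \ni \tau_+^l - z^l$ and $T_0 \ni \tau_1$ at origin, the tilings $\Pb^{m+1-l}(T_l)$ and $\Pb^{m+1-l}(T_0)$ respectively have $\tau_+^{m+1} - z^{m+1}$ and $\tau_1$ at origin, matching $T_1$ and $T_1^0$. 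For small $t > 0$ this local agreement gives $B_0[\Pb^{m+1-l}(T_l) - t] = B_0[T_1 - t]$ and likewise on the other side, so $\Pb^{m+1-l}(T_l) - t \sim_s T_1 - t$ and $\Pb^{m+1-l}(T_0) - t \sim_s T_1^0 - t$ (trivially, with $n = 0$ in the definition). Chaining with $T_1^0 \sim_s T_1$ densely on $\R^+$ from Lemma \ref{stably equiv 2} yields $\Pb^{m+1-l}(T_0) \sim_s \Pb^{m+1-l}(T_l)$ densely on a small interval around $0$. Substituting $t = \beta^{m+1-l} s$ and using the $\Pb$-invariance of $\sim_s$ (a direct consequence of $\Pb(T-s) = \Pb(T) - \beta s$), we get $T_0 - s \sim_s T_l - s$ densely on $(0, \epsilon_l)$ for some $\epsilon_l > 0$; that is, $\tau_1 \sim_s \tau_+^l - z^l$ densely on $(0, \epsilon_l)$.

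Finally, taking $\epsilon$ less than every $\epsilon_k$, $\epsilon_l$, and every prototile length, we will have $\tau_1 \sim_s \tau - \min(\tau)$ densely on $(0, \epsilon)$ for each prototile $\tau$. Each such relation holds on an open dense subset of $(0, \epsilon)$, and finite intersections of open dense subsets of an interval remain open dense, so transitivity of $\sim_s$ will give $\tau - \min(\tau) \sim_s \tau' - \min(\tau')$ densely on $(0, \epsilon)$ for every pair of prototiles. The main technical obstacle will be the careful bookkeeping of how denseness behaves under $\Pb$ and $\Pb^{-1}$, together with the inductive identification of the first letter of $\pb^{m+1-l}$ via the $T_\beta$-orbit of the vertex $z^l$.
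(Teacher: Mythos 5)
Your proposal is correct and follows essentially the same route as the paper's proof: apply a power of $\Pb$ chosen so that the image of each starting tile begins with $\tau_+^{m+1}$ (using that $T_{\beta}$ advances the vertices $z^l$ along the orbit of $1$ while fixing $0$), and then invoke Lemma \ref{stably equiv 2} to tie $\tau_+^{m+1}$ to $\tau_1$ densely near the origin, pulling the dense set back by $\Pb^{-(m+1-l)}$. The only cosmetic difference is that the paper first matches each $\tau_+^i$ to the in-phase periodic tile $\tau_+^{m+j}$ before citing Lemma \ref{stably equiv 2}, whereas you match $\tau_+^l$ and $\tau_1$ directly to the pair $T_1$, $T_1^0$; the mechanism is identical.
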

\begin{proof}
For each $i>1$ there is $j\in\{1,\ldots,p\}$ so that $z^i$ is in phase with $z^{m+j}$. This means that $\Pb^{m-i+1}(\tau_+^i)=\{\tau_+^{m+1},\ldots\}$ and $\Pb^{m-i+1}(\tau_+^{m+j})=\{\tau_+^{m+1},\ldots\}$. Hence $\tau^i\sim_s\tau^{m+j}$ densely on $[0,\epsilon_i)$, with $\epsilon_i:=\beta^{i-m-1}(length(\tau^{m+1}))$. Since $T_i\sim_sT_j\sim_sT_k^0$ densely on $\R^+$ (Lemma \ref{stably equiv 2}) we have that $\tau_1\sim_s\tau_+^l$ densely on $[0,\epsilon)$, with $\epsilon:=min\{\beta^{-m-1}(length(\tau_+^{m+1})),\,length(\tau_1)\}$, for all $l\in\{2,\ldots,m+p\}$.
\end{proof}

\begin{lemma} \label{eventually} Suppose that $(\Ob,\R)$ does not have pure discrete spectrum and $m>0$. Then $i_{\tpb}$ is eventually constant and $t_{\tpb}$ is injective.
\end{lemma}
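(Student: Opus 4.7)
The plan is to invoke Theorem~\ref{reduction} with $\phi=\pb$: the non-PDS hypothesis on $(\Ob,\R)$ is assumed, and Lemma~\ref{initial stability} (which requires $m>0$) verifies the dense initial stable equivalence condition. This produces a nonperiodic Pisot substitution $\tpb$ with $(\Omega_{\tpb},\R)$ also non-PDS, a closed equivalence relation $R$ on $\mathcal{R}_{\pb}$ coming from Lemma~\ref{R}, and a morphism $\alpha:\mathcal{A}\to\tilde{\mathcal{A}}^*$ satisfying $\alpha\circ\pb=\tpb\circ\alpha$ and inducing the quotient $\mathcal{R}_{\pb}\to\mathcal{R}_{\pb}/R=\mathcal{R}_{\tpb}$. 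Since $\tpb$ is constructed precisely as a quotient by $\approx_s$, the relation $\approx_s$ is trivial on $\Omega_{\tpb}$; this will drive every contradiction below.

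The first observation is that the hypothesis of Lemma~\ref{initial stability} forces $i_{\alpha}$ to be a constant function, say $i_{\alpha}\equiv c_*$ for some $c_*\in\tilde{\mathcal{A}}$. Indeed, the dense stable equivalence of all prototiles near their initial vertex says that the $R$-class of any sufficiently short initial arc of any circle of $\mathcal{R}_{\pb}$ is independent of the circle, so the quotient sends every initial arc to a common arc lying on a single edge $c_*$ of $\mathcal{R}_{\tpb}$. The intertwining $\alpha\circ\pb=\tpb\circ\alpha$ then immediately gives $i_{\tpb}(c_*)=c_*$.

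To promote this to eventual constancy of $i_{\tpb}$ I would argue by contradiction. Suppose some letter $d\neq c_*$ lies on an $i_{\tpb}$-periodic orbit of minimal period $k\ge 1$. Then there is a tiling $T_d\in\Omega_{\tpb}$ fixed by the $k$-th power of the tile substitution of $\tpb$, having a tile of type $d$ whose left vertex is $0$. The analogous $\tpb$-fixed tiling $T_{c_*}$, with a tile of type $c_*$ whose left vertex is $0$, also exists. I would establish (i) $T_d\sim_{srp}T_{c_*}$, and (ii) $T_d-t\sim_s T_{c_*}-t$ for $t$ in a dense subset of $\R$; combined these give $T_d\approx_s T_{c_*}$, contradicting triviality of $\approx_s$ on $\Omega_{\tpb}$. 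For (i), both tilings share the vertex $0$ and are periodic under the tile substitution, so the same argument establishing $T_j^0\sim_{srp}T_k^0$ inside $\Omega_{\pb}$ adapts. For (ii), I would lift to $\Omega_{\pb}$ through $\alpha$: the lifts of $T_d$ and $T_{c_*}$ contain translates of some $T_j$ and $T_k^0$, which are densely stably equivalent on $\R^+$ by Lemma~\ref{stably equiv 2}, and dense stable equivalence descends through the quotient.

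For injectivity of $t_{\tpb}$ I would suppose $t_{\tpb}(c)=t_{\tpb}(c')=d$ with $c\neq c'$ and derive a contradiction. Choose periodic tilings $S,S'\in\Omega_{\tpb}$ (fixed by some power of the tile substitution) having respectively a type-$c$ and a type-$c'$ tile with right vertex at $0$. Applying the tile substitution, both acquire a type-$d$ tile with right vertex at $0$, and iterating renders $S$ and $S'$ asymptotic on $\R^+$. Lifting through $\alpha$ produces asymptotic $\Pb$-periodic tilings in $\Omega_{\pb}$, to which Proposition~\ref{not asymptotic} applies: the lifts either both lie in $\{T_j^0:1\le j\le p\}$ or belong to distinct $\Pb$-orbits. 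The first alternative, combined with triviality of $\approx_s$ on $\Omega_{\tpb}$, forces $S=S'$ and hence $c=c'$; the second is incompatible with the construction of $S,S'$ as distinct tilings sharing a common right vertex at $0$. The main obstacle is the lifting argument in both parts: since $\alpha$ need not be letter-to-letter, identifying which tilings in $\Omega_{\tpb}$ correspond to which in $\Omega_{\pb}$ (and, in particular, locating the images of $T_j$ and $T_j^0$) requires careful bookkeeping built on the monotonicity Properties~1--3 and on the structure of asymptotic $\Pb$-periodic pairs in Proposition~\ref{not asymptotic}.
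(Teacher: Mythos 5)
Your setup is the paper's: invoke Lemma~\ref{initial stability} and Theorem~\ref{reduction} to obtain $\tpb$ and the intertwining morphism $\alpha$ with $\alpha\circ\pb=\tpb\circ\alpha$. After that, however, your argument diverges from the paper's and has genuine gaps. First, the claim that $i_{\alpha}$ is constant does not follow from Lemma~\ref{initial stability}: the relation $R$ is generated by $\approx_s$, which requires $\sim_{srp}$ \emph{in addition to} dense $\sim_s$, and two arbitrary tilings passing through the initial vertices of two prototiles need not be strong regionally proximal. The paper only obtains $i_{\alpha}(k)=i_{\alpha}(1)$ for the single letter $k$ with $min(\tau_k)=z^{m+j}$, using the specific pair $T_j\approx_s T_j^0$ of Corollary~\ref{approx nontrivial} (where $\sim_{srp}$ is known), and then reaches all other letters via $i_{\pb^n}(l)=k$ and the intertwining. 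The same missing-$\sim_{srp}$ problem undermines your step (i) for $T_d\sim_{srp}T_{c_*}$: the tilings $T_j^0$ are pairwise $\sim_{srp}$ because they are \emph{asymptotic} (they share the entire half-tiling $\cup_n\Pb^{np}(\tau_1)$ on $\R^+$), not merely because they share the vertex $0$ and are substitution-periodic; two fixed tilings sharing a vertex need not be regionally proximal at all.

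The injectivity argument has a more structural flaw. If $t_{\tpb}(c)=t_{\tpb}(c')=d$ with $c\ne c'$, then at most one of $c,c'$ can lie on a $t_{\tpb}$-cycle, so substitution-periodic tilings with a type-$c$ (resp.\ type-$c'$) tile having right vertex at $0$ cannot both exist; your construction of $S,S'$ is vacuous exactly in the situation you are trying to rule out. (Non-injectivity of $t_{\tpb}$ manifests as a letter missing from the eventual image of $t_{\tpb}$, which is invisible to periodic tilings.) Moreover, a common terminal letter propagates under substitution to agreement of the patches to the \emph{left} of $0$, not to asymptoticity on $\R^+$ as you assert. The paper instead proves the stronger statement that $t_{\tpb^n}$ is the identity: it analyzes the first coincidence of the $\approx_s$-pair $T_i, T_i^0$ in $\Ob$ (not $\Omega_{\tpb}$), shows that immediately before the shared tile $\tau_1+t$ one tiling ends in a translate of $\tau_{m+p}$ and the other in a translate of some $\tau_k$ with $k\ne m+p$, deduces $t_{\alpha}(k)=t_{\alpha}(m+p)$ from the relation $R$, and then runs the identity $t_{\tpb^n}\circ t_{\alpha}=t_{\alpha}\circ t_{\pb^n}$ around the $t_{\pb}$-cycle through $m+p$. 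Proposition~\ref{not asymptotic} plays no role in the paper's proof of this lemma, and the paper never needs the triviality of $\approx_s$ on $\Omega_{\tpb}$ (which is asserted in the introduction but is not part of the statement of Theorem~\ref{reduction}); everything is carried out with $\approx_s$ on $\Ob$, where it is nontrivial, together with the combinatorics of $\alpha$, $i$, and $t$.
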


\begin{proof}
Pick $i\in\{1,\ldots,p\}$ and let $P\subset T_i$ and $P'\subset T_i^0$ be patches, both supported on $[0,t_0]$, with $P\cap P'=\emptyset$ and so that $\tau_1+t_0\in B_0[T_i-t_0]\cap B_0[T_i^0-t_0]$ (that is, $P,P'$ are the initial patches of $T_i,T_i^0$ preceding their first agreement). Then $\Pb^p(P)=P\cup \{\tau_1+t_0\}\cup Q$ and $\Pb^p(P')=P'\cup\{\tau_1+t_0\}\cup Q'$.
Let $k>0$ be minimal with the property that $\pb^k(P)\cap\Pb^k(P')\ne\emptyset$. We may as well assume (after replacing $T_i$ and $T_i^0$ by $\Pb^{k-1}(T_i)$ and $\Pb^{k-1}(T_i^0)$, and $P,P'$ by the initial disjoint patches  of $\Pb^{k-1}(T_i),\Pb^{k-1}(T_i^0)$) that $k=1$. Then $\Pb(P)=P_1\cup \{\tau_1+t\}\cup P_2$ and $\Pb(P')=P_1'\cup \{\tau_1+t\}\cup P_2'$ for some $t>0$, with $P_1\cap P_1'=\emptyset$ and $spt(P_1)=[0,t]=spt(P_1')$. Consider the (disjoint) patches $B_0[T_i-t/\beta]\subset P$ and $B_0[T_i^0-t/\beta]\subset P'$. Let us first observe that these patches can't both be doubletons. Indeed, if $B_0[T_i-t/\beta]=\{\rho_1<\rho_2\}$ and $B_0[T_i^0-t/\beta]=\{\rho_1'<\rho_2'\}$, then $\rho_2=\tau_1+t/\beta=\rho_2'$ (since $\tau_1$ is the only prototile whose image under $\Pb$ begins with a tile of type 1), contradicting the disjointness of $P$ and $P'$. If $B_0[T_i-t/\beta]$ and $B_0[T_i^0-t/\beta]$ are both singletons, then these must be of the form $\{\tau\}$, $\{\tau'\}$ with $\tau$ a translate of some $\tau_l$ and $\tau'$ a translate of some $\tau_{l'}$ with $l,l'$ such that $k/\beta\in int(spt(\tau_l))$ and $k'/\beta\in int(spt(\tau_{l'}))$ for some $k,k'\in\{1,\ldots \lfloor\beta\rfloor\}$. But then $\tau_{m+p}+t-1\in \Pb(\tau)\cap\Pb(\tau')$ is in both $P_1$ and $P_1'$, contradicting the disjointness of these patches. Thus, exactly one of $B_0[T_i-t/\beta]$ and $B_0[T_i^0-t/\beta]$ is a doubleton and one of the patches $P_1, P_1'$ ends in $\tau_{m+p}+t-1$ and the other ends in $\tau_k+t-max(\tau_k)$ for some $k\in \{1,\ldots,m+p-1\}$.

Since $\Pb(T_i)\approx_s\Pb(T_i^0)$, we see from the definition of the relation $R$ that $q(1-s)Rq(max(\tau_k)-s)$ in the rose $\mathcal{R}_{\beta}$ for all positive $s$ less than the smaller of the lengths of $\tau_k$ and $\tau_{m+p}$. 
This means that $t_{\alpha}(k)=t_{\alpha}(m+p)$. Let $max({\tau_k})=z^{n+1}$. Then $t_{\pb^n}(m+p)=k$ and it follows from $\alpha\circ\pb=\tpb\circ\alpha$ that $t_{\tpb^n}(t_{\alpha}(m+p))=t_{\alpha}(t_{\pb^n}(m+p))=t_{\alpha}(k)=t_{\alpha}(m+p)$. Now let $a$ be any letter in the alphabet for $\tpb$ and let $l\in\{1,\ldots,m+p\}$ be so that $t_{\alpha}(l)=a$. Let $r\in\N$ be so that $t_{\pb^r}(m+p)=l$. Then

$$t_{\tpb^n}(a)=t_{\tpb^n}(t_{\alpha}(l))$$
$$=t_{\tpb^n}(t_{\alpha}(t_{\pb^r}(m+p)))$$
$$=t_{\tpb^r}(t_{\alpha}(t_{\pb^n}(m+p)))$$
$$=t_{\tpb^r}(t_{\alpha}(m+p))$$
$$=t_{\alpha}(t_{\pb^r}(m+p))$$
$$=t_{\alpha}(l)$$
$$=a.$$
Hence $t_{\tpb}$ is injective.

Pick $j\in\{1,\ldots,p\}$ (so $T_j\approx_s T_j^0$ by Corollary \ref{approx nontrivial}) and let $k\in\{2,\ldots,m+p\}$ be so that $min(\tau_k)=z^{m+j}$ (thus $\tau_k$ is the initial tile of $T_j$). Then $i_{\alpha}(k)=i_{\alpha}(1)$. Now, for any $l\in\{2,\ldots, m+p\}$ there is $n\in\N$ so that $i_{\pb^n}(l)=k$, and then $i_{\tpb^n}(i_{\alpha}(l))=i_{\alpha}(i_{\pb^n}(l))=i_{\alpha}(k)=i_{\alpha}(1)$. Since $i_{\pb}(1)=1$, $i_{\tpb}(i_{\alpha}(1))=i_{\alpha}(1)$. That $i_{\tpb}$ is eventually constant (with value $i_{\alpha}(1)$) follows from surjectivity of $i_{\alpha}$.

\end{proof}

The following is Theorem 3.12 of \cite{B1}.

\begin{Theorem} \label{i and t} Suppose that  $\phi$ is a nonperiodic Pisot substitution with the properties: $i_{\phi}$ is eventually constant and $t_{\phi}$ is injective. Then $(\Omega_{\phi},\R)$ has pure discrete spectrum.
\end{Theorem}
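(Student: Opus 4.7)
The plan is to argue by contradiction. Assume $(\Omega_\phi, \R)$ lacks pure discrete spectrum; I will extract a reduced substitution $\tilde\phi$ that inherits both hypotheses of the theorem, and then apply item (8) of the introduction to contradict the triviality of $\approx_s$ on the reduction.

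First I would verify that $i_\phi$ eventually constant supplies the ``initial dense stability'' hypothesis of Theorem \ref{reduction}. If $i_{\phi^n}(a) = b$ for every $a \in \mathcal{A}$, then the patches $\Phi^n(\tau_a - \min(\tau_a))$ and $\Phi^n(\tau_{a'} - \min(\tau_{a'}))$ both start with the tile $\tau_b - \min(\tau_b)$ sitting at the origin. For $t \in (0, \mathrm{length}(\tau_b)/\beta^n)$ the origin of both $\Phi^n(T - t)$ and $\Phi^n(T' - t)$ lies strictly inside this common first tile for any extensions $T \supset \tau_a - \min(\tau_a)$ and $T' \supset \tau_{a'} - \min(\tau_{a'})$, so $B_0[\Phi^n(T - t)] = B_0[\Phi^n(T' - t)]$ and therefore $T - t \sim_s T' - t$ on a dense (in fact open dense) subset. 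With this in hand, Theorem \ref{reduction} together with item (6) of the introduction produces a nonperiodic Pisot substitution $\tilde\phi$ with $(\Omega_{\tilde\phi}, \R)$ lacking pure discrete spectrum, $\approx_s$ trivial on $\Omega_{\tilde\phi}$, and an intertwining morphism $\alpha : \mathcal{A} \to \tilde{\mathcal{A}}^{*}$ with $\alpha \circ \phi = \tilde\phi \circ \alpha$ whose initial- and terminal-letter maps $i_\alpha, t_\alpha$ are surjective onto $\tilde{\mathcal{A}}$.

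Next I would show that $\tilde\phi$ inherits both hypotheses, mimicking the final paragraphs of Lemma \ref{eventually}. Since $t_\phi$ is injective on a finite alphabet it is a permutation, so $t_{\phi^N} = \mathrm{id}_{\mathcal{A}}$ for some $N$; commuting $\alpha$ past $\phi^N$ at each $a$ gives $t_{\tilde\phi^N}(t_\alpha(a)) = t_\alpha(a)$, and surjectivity of $t_\alpha$ (with a primitivity-based extension through the argument of Lemma \ref{eventually} if needed) promotes this to $t_{\tilde\phi^N} = \mathrm{id}$ on all of $\tilde{\mathcal{A}}$, whence $t_{\tilde\phi}$ is injective. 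An analogous computation using $i_{\phi^M}(a) = b$ and surjectivity of $i_\alpha$ yields $i_{\tilde\phi^M}$ constant, so $i_{\tilde\phi}$ is eventually constant. Taking $K$ a common multiple of $M$ and $N$, we obtain $\tilde\phi^K(\tilde{a}) = \tilde{b}' \cdots \tilde{a}$ for every $\tilde{a} \in \tilde{\mathcal{A}}$, with fixed initial letter $\tilde{b}' = i_\alpha(b)$.

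Finally, item (8) of the introduction applied to $\tilde\phi$ forces $\approx_s$ to be nontrivial on $\Omega_{\tilde\phi}$, contradicting the triviality established by the reduction. Hence $(\Omega_\phi, \R)$ must have pure discrete spectrum. The hard part will be the passage through $\alpha$ in the preceding paragraph: ensuring the surjectivity of $i_\alpha$ and $t_\alpha$ (a structural feature of the reduction construction of \cite{B1}) and, should either surjectivity fail on some letter, extending the fixed-letter identities across the full reduced alphabet $\tilde{\mathcal{A}}$ via primitivity of $\tilde\phi$, exactly in the style of the final chain of equalities in the proof of Lemma \ref{eventually}.
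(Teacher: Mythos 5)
First, a point of comparison: the paper itself offers no proof of this statement --- Theorem \ref{i and t} is imported as Theorem 3.12 of \cite{B1} --- so there is no internal argument to measure yours against. Your strategy (reduce via Theorem \ref{reduction}, show the quotient substitution $\tilde\phi$ inherits the two hypotheses, and contradict the triviality of $\approx_s$ on $\Omega_{\tilde\phi}$ using item (8)) is exactly the blueprint sketched in items (6)--(8) of the introduction, and your first step is correct: an eventually constant $i_\phi$ does give the dense initial stability needed for Theorem \ref{reduction}, since for $t\in(0,\mathrm{length}(\tau_b)/\beta^n)$ both $B_0[\Phi^n(T-t)]$ and $B_0[\Phi^n(T'-t)]$ reduce to the same singleton, and the characterization of $\sim_s$ by eventual agreement of $B_0[\Phi^k(\cdot)]$ then applies.

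The genuine gap is the one you flag and then defer: the surjectivity of $i_\alpha$ and $t_\alpha$. Your entire inheritance step funnels through it. From $t_{\phi^N}=\mathrm{id}$ you only obtain $t_{\tilde\phi^N}(c)=c$ for $c$ in the $t_{\tilde\phi}$-invariant subset $t_\alpha(\mathcal{A})\subseteq\tilde{\mathcal{A}}$, and from $i_{\phi^M}\equiv b$ only that $i_\alpha(b)$ is fixed by $i_{\tilde\phi^M}$ and absorbs the $i_{\tilde\phi}$-orbits of letters in $i_\alpha(\mathcal{A})$; if either image is a proper subset you cannot conclude $\tilde\phi^K(\tilde a)=\tilde b\cdots\tilde a$ for \emph{all} $\tilde a\in\tilde{\mathcal{A}}$, which is what item (8) demands. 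A priori a petal of $\mathcal{R}_{\phi}/R$ could be covered only by arcs whose endpoints are interior points of petals of $\mathcal{R}_{\phi}$ (elements of $[*]_R$ other than $*$), in which case that letter misses both $i_\alpha(\mathcal{A})$ and $t_\alpha(\mathcal{A})$. Your proposed fallback --- extending the fixed-letter identities ``via primitivity, in the style of the final chain of equalities of Lemma \ref{eventually}'' --- does not close this: that chain works because for $\pb$ every letter lies in the forward $t_{\pb}$-orbit of the single letter $m+p$ (and every letter reaches the distinguished letter $k$ under $i_{\pb^n}$), a feature of $\beta$-substitutions with no analogue for a general $\phi$ whose $t_\phi$ is merely a permutation with possibly several cycles; primitivity of $\tilde\phi$ guarantees every letter of $\tilde{\mathcal{A}}$ occurs somewhere in some $\alpha(a)$, but not that it occurs as a first or last letter. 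Note that the paper also invokes surjectivity of $i_\alpha$ and $t_\alpha$ without proof in Lemma \ref{eventually}, so it is presumably established as a structural property of the quotient construction in \cite{B1}; but nothing stated in this paper supplies it, and until you either prove it or find another route to $\tilde\phi^K(\tilde a)=\tilde b\cdots\tilde a$ on the full reduced alphabet, your argument is incomplete at its decisive step.
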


\begin{theorem} \label{main theorem} If $\beta$ is Pisot, then $(\Ob,\R)$ has pure discrete spectrum.
\end{theorem}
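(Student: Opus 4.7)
The plan is to assume for contradiction that $(\Ob,\R)$ does not have pure discrete spectrum, and derive a contradiction by assembling the lemmas and theorems already established in this section. The only case I need to handle in detail is the non-simple Parry case $m>0$; when $\beta$ is a simple Parry number ($m=0$), the result was proved in \cite{B1} (the direct antecedent cited in the introduction), so I would simply invoke that.

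So suppose $m>0$ and, toward contradiction, that $(\Ob,\R)$ does not have pure discrete spectrum. First I would apply Lemma \ref{initial stability} to verify the hypothesis of Theorem \ref{reduction}: for the $\beta$-substitution $\pb$, there exists $\epsilon>0$ so that $\tau-min(\tau)\sim_s \tau'-min(\tau')$ densely on $[0,\epsilon)$ for all tiles $\tau,\tau'$ (the ``dense initial stable relation'' condition). Then Theorem \ref{reduction} (applied with $\phi=\pb$) yields that $\tpb$ is itself a nonperiodic Pisot substitution and that $(\Omega_{\tpb},\R)$ does not have pure discrete spectrum.

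Next, with the same standing hypothesis ($m>0$ and $(\Ob,\R)$ not pure discrete), Lemma \ref{eventually} produces the structural information on $\tpb$ that we need: the initial-letter map $i_{\tpb}$ is eventually constant and the terminal-letter map $t_{\tpb}$ is injective. These are precisely the hypotheses of Theorem \ref{i and t}, applied to the substitution $\tpb$. The conclusion of that theorem is that $(\Omega_{\tpb},\R)$ has pure discrete spectrum, contradicting the output of Theorem \ref{reduction} in the previous paragraph. This contradiction forces $(\Ob,\R)$ to have pure discrete spectrum when $m>0$, completing the proof.

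There is no genuine obstacle remaining here, since the whole section has been organized precisely so that this final theorem becomes a short assembly. The substantive obstacles were already dispatched earlier: the delicate combinatorial work in Proposition \ref{not asymptotic} (which feeds Corollary \ref{cor} and hence Lemma \ref{stably equiv 2}) and the reduction machinery of \cite{B1} that produces $\tpb$ together with the morphism $\alpha$ satisfying $\alpha\circ\pb=\tpb\circ\alpha$. The only small care needed is in bookkeeping the two cases ($m=0$ versus $m>0$) and in confirming that the hypotheses of Theorems \ref{reduction} and \ref{i and t} match the conclusions of Lemmas \ref{initial stability} and \ref{eventually} respectively; both matches are essentially by design, so the write-up should be a few lines citing the named results in sequence.
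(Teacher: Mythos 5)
Your argument for the main case $m>0$ is exactly the paper's proof: Lemma \ref{initial stability} supplies the hypothesis of Theorem \ref{reduction}, whose conclusion (that $(\Omega_{\tpb},\R)$ is not pure discrete) is then contradicted by Lemma \ref{eventually} together with Theorem \ref{i and t}. The only (harmless) deviations are that the paper handles $m=0$ by checking directly that $i_{\pb}$ is eventually constant and $t_{\pb}$ is injective and invoking Theorem \ref{i and t} rather than citing \cite{B1}, and that it disposes of integer $\beta$ as a separate trivial case (there $\Omega_{\psi_n}$ is a solenoid with a Kronecker action), which your write-up should also do explicitly since the periodic substitution $1\mapsto 1^n$ falls outside the nonperiodic framework the cited results assume.
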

\begin{proof}This is clearly true if $\beta=n\in\N$. Suppose that $\beta$ is Pisot, and not an integer. If $m=0$ it is easy to check that $i_{\pb}$ is eventually constant and $t_{\pb}$ is injective. Hence $(\Ob,\R)$ has pure discrete spectrum by Theorem \ref{i and t}. If $m>0$ and $(\Ob,\R)$ does not have pure discrete spectrum, then the substitution $\tpb$ is non-periodic Pisot and $(\Omega_{\tpb},\R)$ does not have pure discrete spectrum (Lemma \ref{initial stability} to get $\tilde{\pb}$ and Theorem \ref{reduction}). But $(\Omega_{\tpb},\R)$ must have pure discrete spectrum by Lemma \ref{eventually} and Theorem \ref{i and t}.
\end{proof}

\begin{remark} If the degree $d$ of the Pisot number $\beta$ equals $m+p$ (so that the $\beta$-substitution is irreducible), then it follows from Theorem \ref{main theorem} and \cite{CS} (see, also, \cite{Sa}) that the substitutive system (not to be confused with the $\beta$-shift) also has pure discrete spectrum. There are, however, examples of Pisot $\beta$ for which the substitutive system does not have pure discrete spectrum (\cite{EI}. It follows from Theorem \ref{main theorem} and Proposition 8.1 of \cite{BBK} that, for Pisot units, the substitutive system associated with $\pb$ has, as an a.e. one-to-one factor, an isometric exchange of domains (the Rauzy pieces) induced as a first return of a minimal translation on the $(d-1)$-torus.
\end{remark}

\begin{corollary} \label{weak} All Pisot numbers are weakly finitary.
\end{corollary}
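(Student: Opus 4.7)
The plan is to deduce this as an immediate consequence of Theorem \ref{main theorem}, together with the known equivalence between Property (W) and pure discrete spectrum of $(\Ob,\R)$ established by Akiyama in \cite{A2}. Since Theorem \ref{main theorem} asserts that $(\Ob,\R)$ has pure discrete spectrum for every Pisot number $\beta$, and Property (W) is by definition weak finitariness of $\beta$, the conclusion is immediate once these two facts are chained together.

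Concretely, I would argue as follows. Let $\beta$ be a Pisot number. If $\beta\in\N$, weak finitariness is trivial (every element of $\Z[1/\beta]\cap\R^+=\Z[1/\beta]\cap\R^+$ actually has finite $\beta$-expansion in that case, up to the usual base-$n$ conventions). If $\beta\notin\N$, Theorem \ref{main theorem} gives pure discrete spectrum of $(\Ob,\R)$. Invoking Akiyama's theorem from \cite{A2}, which asserts that pure discrete spectrum of $(\Ob,\R)$ is equivalent to Property (W), we conclude that $\beta$ is weakly finitary. This proves the corollary.

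I would also remark, as flagged in the parenthetical comment following Conjecture \ref{Pisot Conjecture}'s discussion in Section \ref{coding and W}, that a stronger density statement holds: for each $z\in\Z[1/\beta]\cap\R^+$ the set $\{y\in Fin(\beta): y+z\in Fin(\beta)\}$ is in fact dense in $\R^+$. The strengthening is natural to prove once the stable relation framework of Section \ref{proof} is in hand, and a detailed proof is deferred to Section \ref{connections}, where the author's unified treatment of arithmetical coding and strong regional proximality will make the density of such return times transparent: homoclinic return times to $\hat{\bar{0}}$ under the shift automorphism $\hat{F}_M$ correspond to pairs $(y,y+z)\in Fin(\beta)^2$, and pure discreteness forces these return times to be dense.

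The only genuine content here is Theorem \ref{main theorem}; the rest is bookkeeping via the Akiyama equivalence. There is no obstacle once the main theorem is established, which is why this appears as a corollary. The stronger density statement will require invoking machinery from Section \ref{connections} but is not needed for the plain weak finitariness assertion.
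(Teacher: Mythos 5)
Your argument is logically sound, but it is not the route the paper takes. You deduce the corollary from Theorem \ref{main theorem} together with Akiyama's external equivalence (Property (W) $\Leftrightarrow$ pure discrete spectrum, from \cite{A2}), whereas the paper deliberately avoids citing that equivalence and instead derives Corollary \ref{weak} from Theorem \ref{main theorem} combined with its own Proposition \ref{Fin}, which is proved from scratch in Section \ref{connections} using the stable manifold sets $W^s(T)$, the tilings $T_0^i\in\pi_{max}^{-1}(\hat{\bar{0}})$, and Lemma \ref{dense}. The paper announces this intention explicitly in Section \ref{coding and W} (``we \dots provide an alternative argument for \dots satisfaction of Property (W)''), so the self-contained equivalence is part of the point. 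The two approaches buy different things: yours is shorter and perfectly acceptable as a derivation of the bare statement, but it imports a black box; the paper's Proposition \ref{Fin} establishes the \emph{stronger} form --- for every $z\in\Z[1/\beta]\cap\R^+$ the set $\{y\in Fin(\beta): y+z\in Fin(\beta)\}$ is dense in $\R^+$ --- as itself equivalent to pure discrete spectrum, so the corollary the paper actually obtains is stronger than plain weak finitariness. You correctly anticipate that this strengthening lives in Section \ref{connections} and sketch the right heuristic (homoclinic return times to $\hat{\bar{0}}$), but in the paper that argument is not an optional add-on: it is the proof. One small bookkeeping point: the paper's Section \ref{coding and W} cites the Akiyama equivalence once as \cite{A1} and once as \cite{A2}; your use of \cite{A2} matches the introduction and is presumably the intended reference.
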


\begin{corollary} \label{coding} If $\beta$ is Pisot, $M$ is the companion matrix for the minimal polynomial of $\beta$, and $\bar{y}$ is a fundamental homoclinic point for the solenoidal automorphism $\hat{F}_M$, then the arithmetical coding $h_{\bar{y}}:X_{\beta}\to\hat{\T}^d_{\beta}$ is $a.e.$ one-to-one.
\end{corollary}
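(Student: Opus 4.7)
The plan is to deduce Corollary \ref{coding} from Theorem \ref{main theorem} by identifying the arithmetical coding $h_{\bar{y}}:X_{\beta}\to\hat{\T}^d_{\beta}$ with (a factor of) the maximal equicontinuous factor map $\pi_{max}:\Ob\to\hat{\T}^d_{\beta}$. By Theorem \ref{main theorem}, $(\Ob,\R)$ has pure discrete spectrum, so the coincidence rank of $\pb$ is $1$, which is equivalent to $\pi_{max}$ being a.e.\ one-to-one (with respect to the unique invariant measure $\mu$). The goal is to transfer this a.e.\ injectivity from $\pi_{max}$ to $h_{\bar{y}}$ via a suitable intermediate map.

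First, I would construct a continuous surjection $\Xi:\Ob\to X_{\beta}$ that intertwines the substitution-induced homeomorphism with the shift, i.e.\ $\Xi\circ\Pb=\sigma\circ\Xi$. The natural candidate is built from the map $g_I:\Ob\to [0,1]$ (see \eqref{def of g_I}) by recording, for $T\in\Ob$, the itinerary $\kappa(g_I(T))$ as the forward coordinates and using the $\Pb^{-n}$-scaled positions of tiles around the origin to fill in the backward coordinates. The relation $T_{\beta}\circ g_I=g_I\circ\Pb$ guarantees compatibility of $\Xi$ with $\sigma$, and the Meyer property ensures $\Xi$ has uniformly bounded fibers and is, in fact, a.e.\ one-to-one with respect to $\mu$ and the Parry measure on $X_{\beta}$.

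Next, I would show $h_{\bar{y}}\circ\Xi=\pi_{max}$ (up to a fixed translation in $\hat{\T}^d_{\beta}$). Both sides are continuous factors that intertwine the expanding dynamics: $h_{\bar{y}}\circ\sigma=\hat{F}_M\circ h_{\bar{y}}$ on the coding side, and $\pi_{max}\circ\Pb=\hat{F}_M\circ \pi_{max}$ on the tiling side. Moreover, using the additivity $h_{\bar{y}}(\underline{x+x'})=h_{\bar{y}}(\underline{x})+h_{\bar{y}}(\underline{x'})$ and the identification of the $\R$-action on $\hat{\T}^d_{\beta}$ as the Kronecker action along the eigendirection $\omega$, one checks that $h_{\bar{y}}\circ\Xi$ also factors the $\R$-action of $\Ob$ onto translation by $t\omega$ in $\hat{\T}^d_{\beta}$. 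By the uniqueness of the maximal equicontinuous factor for the minimal system $(\Ob,\R)$, the two factor maps must agree up to a constant. Equivalently, this is the identification of strong regional proximality with the global shadowing relation for $\Pb$ alluded to in the introduction.

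With the factorization $\pi_{max}=h_{\bar{y}}\circ\Xi$ in hand, a.e.\ one-to-oneness of $h_{\bar{y}}$ (with respect to the Parry measure, pushed to Haar measure on $\hat{\T}^d_{\beta}$) is immediate: if $h_{\bar{y}}^{-1}(z)$ had more than one point on a set of positive Haar measure, then $\pi_{max}^{-1}(z)$ would have more than one point on a set of positive Haar measure (since $\Xi$ is a.e.\ one-to-one), contradicting $cr(\pb)=1$. The main obstacle is Step 2, the precise identification of $h_{\bar{y}}\circ\Xi$ with $\pi_{max}$: the coding $h_{\bar{y}}$ is defined from the hyperbolic automorphism $\hat{F}_M$ and a fundamental homoclinic point, while $\pi_{max}$ is defined from the equicontinuous structure of the translation action, so making the eigenvector/weight identifications match and handling the choice of homoclinic point carefully is the delicate part; for non-unit $\beta$ one must be additionally careful in realizing $\hat{\T}^d_{\beta}$ as the inverse limit under $F_M$ rather than the torus itself.
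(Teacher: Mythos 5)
Your overall architecture is the same as the paper's: factor $\pi_{max}$ as $h_{\bar y}$ composed with a coding map from $\Ob$ to $X_{\beta}$ built from $t\mapsto -t_*(T)$ (equivalently $g_I$), invoke Theorem \ref{main theorem} to get that $\pi_{max}$ is a.e.\ one-to-one, and push that down to $h_{\bar y}$ using the fact that the image of the coding has full measure in $X_{\beta}$. The gap is in your Step 2, and it is the step that carries essentially all of the content of Section \ref{connections}. You propose to identify $h_{\bar y}\circ\Xi$ with $\pi_{max}$ by the uniqueness of the maximal equicontinuous factor, but that argument requires $h_{\bar y}\circ\Xi$ to be exhibited in advance as a \emph{continuous} $\R$-equivariant factor map onto the Kronecker system, and neither property is available: $\Xi$ is discontinuous (it is built from the discontinuous $g_I$ and the floor function), $X_{\beta}$ carries only a $\Z$-action so $\Xi$ itself cannot be $\R$-equivariant, and the additivity $h_{\bar y}(\underline{x+x'})=h_{\bar y}(\underline{x})+h_{\bar y}(\underline{x'})$ does not transfer to equivariance of $h_{\bar y}\circ\Xi$ under translation of tilings, because the bi-infinite digit sequence $\Xi(T-t)$ is not obtained from $\Xi(T)$ by adding the expansion of $t$ (greedy expansions do not add digit-wise; carries propagate globally). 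So "one checks that $h_{\bar y}\circ\Xi$ factors the $\R$-action" is precisely the assertion that needs proof, and the uniqueness principle cannot be used to supply it.

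What the paper does instead is construct the candidate factor map $\hat\pi$ directly and continuously, without passing through $X_{\beta}$: it lifts tilings to strands in the Pisot subspace $V$, shows via hyperbolicity of $L$ that the vertex sequence $(a_i(\gamma))$ is globally shadowed by a unique point $G(\gamma)$ (Lemma \ref{gs}), defines $\pi(T)=a_0(\gamma_T)+\Gamma$, and proves continuity, surjectivity and $\R$-equivariance by hand (Lemmas \ref{preserves action} and \ref{factor}). The identification $\hat\pi=\pi_{max}$ is then a theorem in its own right (Theorem \ref{rp=gs}, equating strong regional proximality with global shadowing, giving Corollary \ref{pihat is pimax}), and the identity $\hat\pi(T)=h_{\bar y}((x_i(T)))$ is verified by an explicit telescoping computation with the abelian prefixes $p_i(T)$, for one concrete fundamental homoclinic point $\bar y=e^u$ with $e=-\pi_V(\sum e_i)$. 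Two further points you should be aware of: the corollary as stated concerns an \emph{arbitrary} fundamental homoclinic point, so one must either redo the computation or pass between fundamental homoclinic points by an automorphism of $\hat{\T}^d_{\beta}$ commuting with $\hat F_M$; and in the final measure-theoretic step what is actually needed is that $\{(x_i(T)):T\in\Ob\}$ has full measure in $X_{\beta}$ and that the pushforward of $\mu$ is the measure of maximal entropy, rather than a.e.\ injectivity of $\Xi$ per se.
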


We prove Corollaries \ref{weak} and \ref{coding} in the next section.

\section{ The connection between pure discrete spectrum, arithmetical coding, and Property (W).} \label{connections}

For this section, fix a Pisot number $\beta$ of algebraic degree $d$. The {\em substitution matrix} $A$ associated with $\pb$ is the $(m+p)\times(m+p)$ matrix with $ij$-th entry $a_{ij}$ given by the number of occurances of the letter $j$ in the word $\pb(i)$. Then $\beta$ is a simple eigenvalue of $A$ and the characteristic polynomial of $A$ factors over $\Z$ as 
$$p_A(x)=p_{\beta}(x)q(x)$$
with $p_{\beta}(x)$ the minimal polynomial of $\beta$ and $q(x)$ relatively prime with $p_{\beta}(x)$. There are then $s_1(x),s_2(x)\in\Q[x]$ with $s_1(x)p_{\beta}(x)+s_2(x)q(x)=1$  and an $A$-invariant splitting
$$\R^{m+p}=V\oplus W,$$
where $V:=ker(s_1(A)p_{\beta}(A))$ and $W:=ker(s_2(A)q(A))$ are rational (that is, spanned by rational vectors).
The {\em Pisot subspace} $V$ is $d$-dimensional and the linear transformation $L:=A|_V$ has characteristic polynomial $p_{\beta}(x)$.

 Let $V=E^s\oplus E^u$
be the splitting of $V$ into $(d-1)$-dimensional stable and 1-dimensional unstable spaces, let $\pi_{V}:\R^{m+p}\to V$ be projection along $W$, and, for $v\in V$, write $v=v^s+v^u$ with $v^{s,u}\in E^{s,u}$. By $\pi^s$ (resp., $\pi^u$) we will denote both projection of $\R^{m+p}$ and $V$ along $W\oplus E^u$ (resp., $W\oplus E^s$) and of $V$ along $E^u$ (resp., $E^s$) onto $E^s$ (resp., $E^u$).
 The subgroup $$\Gamma:=\pi_V(\Z^{m+p})$$ of $V$ is a $d$-dimensional lattice.
 
 It is convenient to view tilings $T\in\Ob$ as maps of $\R$ into the rose $\mathcal{R}_{\pb}$, rather than as collections of tiles: Given $T\in\Ob$, let $T(t):=g_{\pb}(T-t)$ (recall the definition \ref{def of g}). The $\R$-action on tilings-as-maps is then given by $(T-t)(s):=T(s+t)$; $\Pb$ translates as $\Pb(T)(t):=f_{\pb}(T(t/\beta))$ (with $f_{\pb}$ as in \ref{def of f}); and the tiling metric induces the topology of unifom convergence on compact sets. We view the prototiles as parameterizations of the corresponding petals of the rose via
 $\tau_i(t):=T(t-min(\tau_i))$ where $\tau_i\in T\in\Ob$.
 
For each $i\in\{1,\ldots,m+p\}$, let $e_i\in\Z^{m+p}$ denote the standard unit vector and let $\sigma_i:=\{te_i:t\in[0,1]\}\subset\R^{m+p}$. Let $\tilde{\mathcal{R}}_{\pb}\subset \R^{m+p}$ be the grid
$$\tilde{\mathcal{R}}_{\pb}:=\cup_{a\in\Z^{m+p}}\cup_{i=1,\ldots,m+p}(a+\sigma_i).$$
The map $\tilde{\pi}:\tilde{\mathcal{R}}_{\pb}\to\mathcal{R}_{\pb}$ given by $$\tilde{\pi}(a+te_i):=\tau_i(t)$$
is the {\em universal abelian cover} of $\mathcal{R}_{\pb}$.

We denote by $\tilde{f}_{\pb}:\tilde{\mathcal{R}}_{\pb}\to\tilde{\mathcal{R}}_{\pb}$ the unique lift of $f_{\pb}$ with $\tilde{f}_{\pb}(0)=0$ and, if $T\in\Ob$, $\tilde{T}:\R\to\tilde{\mathcal{R}}_{\pb}$ will denote any lift of $T:\R\to \mathcal{R}_{\pb}$. We then define $\tilde{\Psi}_{\beta}$ on such lifts by $$\tilde{\Psi}_{\beta}(\tilde{T})(t):=\tilde{f}_{\pb}\circ\tilde{T}(t/\beta).$$
Note that since $A$ is the `abelianization' of $f_{\pb}$, $\tilde{f}_{\pb}(a)=Aa$ for $a\in\Z^{m+p}$.

By a {\em strand $\gamma$ corresponding to $T\in\Ob$} we will mean any $$\gamma=x+\pi_V\circ\tilde{T},$$
with $x\in V$ and $\tilde{T}$ a lift of $T$. For $T\in\Ob$, let 

\begin{equation} \label{def of t_*}
t_*(T):=sup\{t\le0:\tau_1+t\in T\}.
\end{equation}
So $T(t_*(T))=*$ is the branch point of $\mathcal{R}_{\pb}$ and $t_*(T)$ is the largest non-positive occurance of the initial vertex of a tile of type 1 in $T$. For a strand $\gamma$ corresponding to $T$ we define its {\em initial vertex} to be 

\begin{equation} \label{def of a}
a(\gamma):=\gamma(t_*(T))
\end{equation}
and the  (abelian) {\em prefix} of $T$, $p(T)$, to be

\begin{equation} \label{def of p}
p(T):=\gamma(t_*(T))-\gamma(\beta t_*(\Pb^{-1}(T))).
\end{equation}

Let us extend $\Pb$ to strands by setting $\Pb(x+\pi_V\circ \tilde{T}):=Lx+\pi_V\circ\tilde{\Psi}_{\beta}(\tilde{T})$. Then, if $\gamma$ is a strand corresponding to $T$,  $\Pb(\gamma)$ is the unique strand corresponding to $\Pb(T)$ having initial vertex $a(\gamma')=p(\Pb(T))+L\gamma (t_*(T))$.
It is clear that $\Pb$ is invertible on strands (even though, if $det(L)\ne\pm1$, $\tilde{\Psi}_{\beta}$ is not invertible), and
$$p(T)=a(\gamma)-La(\Pb^{-1}(\gamma))$$ for any strand $\gamma$ corresponding to $T$. We extend the $\R$-action to lifts and strands by $(\tilde{T}-t)(s):=\tilde{T}(s+t)$ and $(\gamma-t)(s):=\gamma(s+t)$. One easily checks that $\tilde{\Psi}_{\beta}(\tilde{T}-t)=\tilde{\Psi}_{\beta}(\tilde{T})-\beta t$ and $\Pb(\gamma-t)=\Pb(\gamma)-\beta t$.

Given a strand $\gamma$ corresponding to $T\in\Ob$, consider the sequence $(a_i(\gamma))_{i\in\Z}$  of initial vertices of the strands in the $\Pb$-orbit of $\gamma$:
$$a_i(\gamma):=\Pb^i(\gamma)(t_*(\Pb^i(T))).$$
We see that $|a_i(\gamma)-La_{i-1}(\gamma)|=|p(\Pb^i(T))|$ is bounded. From the hyperbolicity of $L$ it follows that $(a_i(\gamma))$ is {\em globally shadowed} by a unique point in V:

\begin{lemma} \label{gs} Given a strand $\gamma$, there is a unique point $G(\gamma)\in V$ with the property that $|L^iG(\gamma)-a_i(\gamma)|$, $i\in\Z$, is bounded. Furthermore, $a(\gamma)-G(\gamma)$ is uniformly bounded as a function of $\gamma$.
\end{lemma}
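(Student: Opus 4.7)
The plan is a standard hyperbolic shadowing argument exploiting the fact that the sequence $(a_i(\gamma))_{i\in\Z}$ is a bounded perturbation of an $L$-orbit on $V$, and that $L$ has no eigenvalues on the unit circle. First I would rewrite the defining relation in the form $a_i(\gamma) = L\,a_{i-1}(\gamma) + p_i$, where $p_i := p(\Pb^i(T))$. Since $p(S)$ is determined by a fixed finite patch of $S$ at the origin and $\Ob$ is compact with only finitely many local configurations of each fixed diameter, the set $\{p(S): S\in\Ob\}$ is finite; in particular $|p_i|\le K$ for a constant $K$ that does not depend on $\gamma$. Looking for $G\in V$ such that $c_i := a_i(\gamma) - L^i G$ is bounded in $i$, the recursion becomes $c_i = L c_{i-1} + p_i$, and the problem reduces to finding a uniformly bounded bi-infinite solution of this inhomogeneous linear equation.

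Next I would decompose along $V = E^s \oplus E^u$. Since $L|_{E^s}$ is a contraction (its eigenvalues are the Galois conjugates of $\beta$, all of modulus strictly less than $1$) and $L|_{E^u}$ is expansion by $\beta$, the stable component is iterated forward from $i=-\infty$ and the unstable component backward from $i=+\infty$. A bounded solution, if it exists, must therefore be given by the absolutely convergent geometric series
\begin{equation*}
c_0^s = \sum_{j=0}^{\infty} (L|_{E^s})^{j}\, p_{-j}^s, \qquad c_0^u = -\sum_{j=1}^{\infty} (L|_{E^u})^{-j}\, p_{j}^u.
\end{equation*}
Setting $G(\gamma) := a(\gamma) - c_0^s - c_0^u$ and using the recursion inductively in both directions verifies that $|L^i G(\gamma) - a_i(\gamma)|$ is bounded in $i\in\Z$.

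For uniqueness, the standard hyperbolic dichotomy applies: if $G$ and $G'$ both shadow $(a_i(\gamma))_{i\in\Z}$, then $L^i(G-G')$ is bounded in both directions, so its stable component cannot grow under backward iteration and its unstable component cannot grow under forward iteration; both must therefore vanish, forcing $G = G'$. For the final uniform estimate, the two series above immediately give
\begin{equation*}
|a(\gamma) - G(\gamma)| \;\le\; K\Bigl(\, \sum_{j=0}^{\infty} \|(L|_{E^s})^{j}\| \,+\, \sum_{j=1}^{\infty} \|(L|_{E^u})^{-j}\| \,\Bigr),
\end{equation*}
a constant depending only on $L$ and $K$, not on $\gamma$. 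I do not anticipate a serious obstacle: the essential ingredient is the hyperbolicity of $L$ on $V$ (equivalently, the Pisot property of $\beta$), and the one thing requiring care is the uniform bound on the prefixes $p(\Pb^i(T))$, which is an immediate consequence of the finiteness of local patterns in the compact tiling space $\Ob$.
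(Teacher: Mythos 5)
Your proof is correct and follows essentially the same route as the paper: the paper likewise splits along $V=E^s\oplus E^u$ and defines $G(\gamma)=\lim_{i\to\infty}L^{-i}a_i^u(\gamma)+\lim_{i\to\infty}L^ia_{-i}^s(\gamma)$, which is exactly your $a(\gamma)-c_0^s-c_0^u$ obtained from the two geometric series. You spell out the uniqueness and the uniform bound a bit more explicitly than the paper does, but the underlying argument is identical.
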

\begin{proof}
We have $L^{-i}a_i^u(\gamma)=a_0^u(\gamma)+L^{-1}p^u(\Pb^0(T))+\cdots+L^{-i}p^u(\Pb^{i-1}(T))$ and 
$L^ia_{-i}^s(\gamma)=a_0^s(\gamma)-p^s(\Pb^{-1}(T))-\cdots-L^{i-1}p^s(\Pb^{-i}(T))$. Since the $p(\Pb^i(T))$ are bounded, the limits in the following formula for $G(\gamma)$ exist:
$G(\gamma)=\lim_{i\to\infty}L^{-i}a_i^u(\gamma)+\lim_{i\to\infty}L^ia^s_{-i}(\gamma)$.
\end{proof}

Note that if $\gamma'=x+\gamma$ is another strand corresponding to $T$, then $a_i(\gamma')=L^ix+a_i(\gamma)$, so $G(\gamma')=G(\gamma)+x$. Thus $$\gamma_T:=\gamma-G(\gamma)$$
is the unique strand corresponding to $T$ with the property that $(a_i(\gamma_T))$ is bounded. We define $\pi:\Ob\to V/\Gamma$  by 

\begin{equation}\label{def of pi}
\pi(T):=a_0(\gamma_T)+\Gamma.
\end{equation}

Let $l=(l_1,\ldots,l_{m+p})$ with $l_i:=length(\tau_i)$ the lenngth of the $i$-th prototile. Then $l$ is a left eigenvector of $A$ (for the eigenvalue $\beta$) and is orthogonal to $E^s$. Thus, setting $\omega$ to be the right positive eigenvector of $A$, normalized with $\langle l^t,\omega\rangle=1$, we have $v^u=\pi^u(v)=\langle l^t,v\rangle\omega$ for all $v\in V$. It follows that if $\gamma$ is any strand, then 

\begin{equation}\label{gamma^u}
\gamma^u(t)-\gamma^u(t')=(t-t')\omega
\end{equation}
for all $t,t'\in\R$.

\begin{lemma} \label{preserves action} $\pi(T-t)=\pi(T)-(t\omega+\Gamma)$ for all $T\in\Ob$ and $t\in\R$.
\end{lemma}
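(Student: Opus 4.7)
The plan is to identify the distinguished strand $\gamma_{T-t}$ attached to $T-t$ by Lemma~\ref{gs} in terms of $\gamma_T$ and $t$, and then read off $a_0(\gamma_{T-t})$ modulo $\Gamma$. The natural candidate is $\gamma:=\gamma_T-t$ (using the $\R$-action $(\gamma_T-t)(s):=\gamma_T(s+t)$), which is a strand for $T-t$, and the task reduces to computing its globally shadowing point $G(\gamma)$; by the uniqueness in Lemma~\ref{gs}, the strand for $T-t$ with bounded $\Pb$-orbit is then $\gamma_{T-t}=(\gamma_T-t)-G(\gamma)$.

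First I would unwind the orbit of initial vertices of $\gamma$. Iterating $\Pb(\gamma_T-t)=\Pb(\gamma_T)-\beta t$ gives
\begin{equation*}
a_i(\gamma)=\Pb^i(\gamma_T)(u_i),\qquad u_i:=t_*(\Pb^i(T-t))+\beta^i t,
\end{equation*}
and $u_i$ is the largest $s\le\beta^i t$ at which a type-$1$ tile starts in $\Pb^i(T)$, while $t_i:=t_*(\Pb^iT)$ is the largest such $s\le0$. By~\eqref{gamma^u}, the unstable part of $a_i(\gamma)-a_i(\gamma_T)=\Pb^i(\gamma_T)(u_i)-\Pb^i(\gamma_T)(t_i)$ equals $(u_i-t_i)\omega$, while the stable part is uniformly bounded. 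The input needed here is a uniform upper bound on the gap between consecutive type-$1$ tiles in any tiling in $\Omega_{\pb}$, a standard consequence of primitivity and repetitivity; this gives $u_i-t_i=\beta^i t+O(1)$ and makes the entire vector $a_i(\gamma)-a_i(\gamma_T)-(\beta^i t)\omega$ uniformly bounded in $i\in\Z$.

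Feeding this into the formulas $G(\gamma)^u=\lim_{i\to\infty}L^{-i}a_i^u(\gamma)$ and $G(\gamma)^s=\lim_{i\to\infty}L^i a_{-i}^s(\gamma)$ from the proof of Lemma~\ref{gs}, and using $G(\gamma_T)=0$ (equivalent to boundedness of $(a_i(\gamma_T))$, since $L$ is hyperbolic and $\omega$ is an eigenvector with eigenvalue $\beta$), the unstable limit gives $G(\gamma)^u=t\omega$ and the stable limit gives $G(\gamma)^s=0$. For the latter, $L^i$ contracts $E^s$ and the stable correction is bounded; indeed, for $i\to\infty$ the quantity $\beta^{-i}t$ eventually drops below the positive minimum spacing between consecutive type-$1$ tiles in $\Omega_{\pb}$, forcing $u_{-i}=t_{-i}$. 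Hence $G(\gamma_T-t)=t\omega$ and $\gamma_{T-t}=(\gamma_T-t)-t\omega$, where the second subtraction is by the vector $t\omega\in V$.

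Finally I would read off $a_0(\gamma_{T-t})=a_0(\gamma_T-t)-t\omega=\gamma_T(u_0)-t\omega$. Since $u_0$ and $t_0$ are instants at which $T$ visits the branch point $*$ of $\mathcal{R}_{\pb}$, the values $\tilde{T}(u_0),\tilde{T}(t_0)$ both lie in $\Z^{m+p}\subset\tilde{\mathcal{R}}_{\pb}$, so $\gamma_T(u_0)-\gamma_T(t_0)=\pi_V(\tilde{T}(u_0)-\tilde{T}(t_0))\in\pi_V(\Z^{m+p})=\Gamma$. Thus $a_0(\gamma_{T-t})\equiv a_0(\gamma_T)-t\omega\pmod{\Gamma}$, which is exactly $\pi(T-t)=\pi(T)-(t\omega+\Gamma)$. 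I expect the computation of $G(\gamma_T-t)$ to be the main obstacle, since controlling both the unstable and stable tails requires leveraging both the uniform upper and the positive lower bounds on the spacing of type-$1$ tiles in tilings from $\Omega_{\pb}$.
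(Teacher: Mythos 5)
Your argument is correct and follows essentially the same route as the paper: both compute $G(\gamma_T-t)$ from the explicit stable/unstable limit formulas of Lemma~\ref{gs}, using~\eqref{gamma^u} to extract the $\beta^i t\omega$ drift in the unstable direction and the uniform bound on stable components of strands to kill the stable correction, and then use the fact that $\gamma_T(u_0)$ and $\gamma_T(t_0)$ differ by an element of $\Gamma$. The only quibble is your side remark that $\beta^{-i}t$ falling below the minimum spacing of type-$1$ starts forces $u_{-i}=t_{-i}$ --- this need not hold, since the first positive type-$1$ start in $\Pb^{-i}(T)$ can be arbitrarily close to $0$ --- but it is harmless because the boundedness of the stable correction, which is all you actually use, already follows from the uniform stable diameter of strands.
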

\begin{proof}
Since $\Pb^i(\gamma-t)=\Pb^i(\gamma)-\beta^it$ for any strand $\gamma$, $i\in\Z$ and $t\in\R$, $|a_i(\gamma-t)-a_i(\gamma)|$ is bounded (for fixed $\gamma$, $t$) for $i\le0$. Now $|a_i(\gamma)-\Psi_{\beta}^i(\gamma)(0)|$ and $|a_i(\gamma-t)-\Psi_{\beta}^i(\gamma-t)(0)|$ are bounded and $(\Pb^i(\gamma)-\beta^it)^u(0)-(\Pb^i(\gamma))^u(0)=\beta^it\omega$. Thus $|a_i^u(\gamma-t)-a_i^u(\gamma)-\beta^it\omega|$ is bounded for $i\ge 0$. It follows that $G(\gamma-t)=G(\gamma)-t\omega$.
Note that $a_0(\gamma-t)\equiv a_0(\gamma)\mod(\Gamma)$. We have $\pi(T-t)=a_0(\gamma_{(T-t)})+\Gamma=a_0((\gamma_T-t)-G(\gamma_T-t))+\Gamma=a_0(\gamma_T)-G(\gamma_T)-t\omega+\Gamma=\pi(T)-(t\omega+\Gamma)$ (using $G(\gamma_T)=0$).
\end{proof}

\begin{lemma} \label{factor} $\pi:\Ob\to V/\Gamma$ is a continuous surjection.
\end{lemma}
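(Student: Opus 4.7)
The plan is to split the lemma into continuity and surjectivity, handling each in turn.

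For continuity, suppose $T_n\to T$ in $\Ob$. By definition of the tiling metric there exist $t_n\to 0$ and $r_n\to\infty$ with $B_{r_n}[T_n-t_n]=B_{r_n}[T]$. I would fix a lift $\tilde T$ of $T$ and then choose lifts $\tilde T_n$ of $T_n$ so that, on $[-r_n,r_n]$, $\tilde T_n(\,\cdot\,+t_n)$ and $\tilde T$ differ by a fixed vector $v_n\in\Z^{m+p}$; such compatible lifts exist because the restriction of $\tilde\pi$ to any simply-connected patch of $\tilde{\mathcal{R}}_{\pb}$ is a homeomorphism. Then $\pi_V\circ\tilde T_n-\pi_V(v_n)$ converges to $\pi_V\circ\tilde T$ uniformly on compact sets. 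Since the prefix sequence $(p(\Pb^i T))_{i\in\Z}$ is uniformly bounded in $T$ (recall \eqref{def of p}), the series representation
\[
G(\gamma)=\lim_{i\to\infty}L^{-i}a_i^u(\gamma)+\lim_{i\to\infty}L^{i}a_{-i}^s(\gamma)
\]
from the proof of Lemma \ref{gs} converges at a geometric rate uniformly across strands, and I would use this to conclude that $a_0(\gamma_{T_n})-\pi_V(v_n)\to a_0(\gamma_T)$ in $V$. Passing to $V/\Gamma$ then gives $\pi(T_n)\to\pi(T)$. The subtle point is that $t_*$ itself is discontinuous: as the origin crosses a type-$1$ initial vertex, $t_*(T)$ jumps and $a_0(\gamma_T)$ changes by a vector in $\Z^{m+p}$, but such a change projects to $\Gamma$ and so is invisible mod $\Gamma$.

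For surjectivity, by Lemma \ref{preserves action} the set $\pi(\Ob)\subset V/\Gamma$ is nonempty, compact, and invariant under the Kronecker action $\bar v\mapsto \bar v-t\omega$. Since $(\Ob,\R)$ is minimal, the $\R$-orbit of any $\pi(T_0)$ is dense in $\pi(\Ob)$, so
\[
\pi(\Ob)=\overline{\{\pi(T_0)-t\omega+\Gamma:t\in\R\}}.
\]
Thus it suffices to show that the one-parameter subgroup $\R\omega$ is dense in the $d$-torus $V/\Gamma$, equivalently that $\pi^s(\Gamma)$ is dense in $E^s$. Because $A$ preserves $\Z^{m+p}$ and commutes with $\pi_V$, the lattice $\Gamma=\pi_V(\Z^{m+p})$ is $L$-invariant; $L|_V$ has characteristic polynomial $p_\beta$, so $\pi^s$ intertwines the action of $L$ on $\Gamma$ with a linear map on $E^s$ whose eigenvalues are the Galois conjugates of $\beta$, all of modulus strictly less than $1$. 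The standard Pisot/Minkowski argument (images of $\Z[\beta]$ under embeddings into its conjugate subspaces are dense) then gives density of $\pi^s(\Gamma)$ in $E^s$, completing surjectivity.

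The principal obstacle is the continuity step. The delicate issues there are (i) controlling the ambiguity in lifting $T\mapsto\tilde T$ so that it collapses exactly to $\Gamma$ in the quotient, and (ii) handling the jumps of $t_*$ and of the initial vertex $a(\gamma)$. Both are resolved by observing that the ambiguity is by $\Z^{m+p}$ in $\R^{m+p}$, whose image under $\pi_V$ is by definition $\Gamma$. Once continuity and equivariance are in hand, surjectivity follows from minimality of $(\Ob,\R)$ together with the essentially standard density statement for the Kronecker flow on $V/\Gamma$.
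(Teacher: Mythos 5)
Your proof is correct and takes essentially the paper's approach: continuity comes from the uniform geometric convergence of the series defining $G$ in Lemma \ref{gs} (which the paper simply declares ``clear from the explicit formula for $G$''), and surjectivity from the equivariance of Lemma \ref{preserves action} together with density of the one-parameter subgroup $t\mapsto t\omega+\Gamma$ in $V/\Gamma$. The only divergence is how that density is justified: the paper notes that a proper orbit-closure subtorus would give a proper rational $L$-invariant subspace of $V$ containing $\omega$ and hence force $\deg\beta<d$, while you invoke the Minkowski-type density of $\pi^s(\Gamma)$ in $E^s$; both hinge on $\deg\beta=d=\dim V$, so the difference is cosmetic.
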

\begin{proof} Continuity is clear from the explicit formula for $G$ given in the proof of Lemma \ref{gs}. Surjectivity is a consequence of Lemma \ref{preserves action} and the irrationality of $\omega$: If $cl\{t\omega:t\in\R\}$ were a proper sub-torus $\T$ of dimension $ k<d$, then there would be a rational $k$-dimensional subspace $U$ of $V$, invariant under $L$. But then the degree of $\beta$ would be at most $k$, and not $d$.
\end{proof} 

The previous two lemmas combine to say that $\pi$ is factor map of $(\Ob,\R)$ onto $(\T^d,\R)$, with the $\R$-action on $\T^d\simeq V/\Gamma$ given by $(x+\Gamma)-t:=x-t\omega +\Gamma$. It is easy to check that 
$\pi$ also semiconjugates $\Pb$ with $F_L:V/\Gamma\to V/\Gamma$ by $F_L(x+\Gamma)=Lx+\Gamma$. Thus $\pi$ induces 

\begin{equation}\label{pihat}
\hat{\pi}:\Ob\to \hat{\T}^d:=\inv F_L,
\end{equation}
which factors $(\Ob,\R)$ onto $(\hat{\T}^d,\R)$.

For $T,T'\in\Ob$, we say that {\em $T$ globally shadows $T'$}, and write $T\sim_{gs}T'$, provided there are lifts $\tilde{T},\tilde{T}'$ of $T,T'$ so that the strands $\gamma:=\pi_V(\tilde{T}),\gamma':=\pi_V(\tilde{T}')$ satisfy: $|a_i(\gamma)-a_i(\gamma')|$ is bounded.

If $\beta$ is a unit, then $det(L)=\pm 1$, $\tilde{\Psi}_{\beta}$ is invertible on $\{\tilde{T}:T\in\Ob\}$, and the above definition can be rephrased as: $T\sim_{gs}T'$ provided there are lifts $\tilde{T},\tilde{T}'$ so that $|\pi_V\circ\tilde{\Psi}_{\beta}^i(\tilde{T})(t_{*,i})-\pi_V\circ\tilde{\Psi}_{\beta}^i(\tilde{T}')(t'_{*,i})|$ is bounded, where $t_{*,i}:=t_*(\Pb^i(T))$ and $t'_{*,i}:=t_*(\Pb^i(T'))$. To cover the `nonunimodular' case as well, we will need the following awkward (but straightforward) reformulation.

\begin{lemma} \label{reformulation} $T\sim_{gs}T'$ if and only if there are $B<\infty$ and lifts $\tilde{T}$, $\tilde{T}'$, so that for each $i\in\N$ there are $v_i\in\Gamma$ and lifts $\tilde{T}_i,\tilde{T}'_i$ of $\Pb^{-i}(T),\Pb^{-i}(T')$, such that 
$\tilde{\Psi}_{\beta}^i(\tilde{T}_i)=v_i+\tilde{T}$, $\tilde{\Psi}_{\beta}^i(\tilde{T}'_i)=v_i+\tilde{T}'$, and $|\pi_V\circ\tilde{\Psi}_{\beta}^k(\tilde{T}_i)(t_{*,k-i})-\pi_V\circ\tilde{\Psi}_{\beta}^k(\tilde{T}'_i)(t'_{*,k-i})|\le B$ for all $k\in\N$, where $t_{*,j}:=t_*(\Pb^j(T))$ and $t'_{*,j}:=t_*(\Pb^j(T'))$.
\end{lemma}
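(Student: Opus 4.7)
The plan is to reduce both formulations to the same bounded-shadow condition on the strand initial-vertex sequence $(a_j(\gamma))_{j\in\Z}$, using that $\Pb$ is invertible on strands even when $\tilde{\Psi}_\beta$ is not injective on lifts. Suppose $\tilde{T}_i$ is a lift of $\Pb^{-i}(T)$, and let $u_i\in\Z^{m+p}$ be the integer translate with $\tilde{\Psi}_\beta^i(\tilde{T}_i)=u_i+\tilde{T}$; set $\tilde{\gamma}_i:=\pi_V\circ\tilde{T}_i$ and $v_i:=\pi_V(u_i)\in\Gamma$. From $\Pb^k(x+\pi_V\circ\tilde{T})=L^kx+\pi_V\circ\tilde{\Psi}_\beta^k(\tilde{T})$ one computes
$$\Pb^k(\tilde{\gamma}_i)\;=\;L^{k-i}v_i+\Pb^{k-i}(\gamma),$$
so that
$$\pi_V\circ\tilde{\Psi}_\beta^k(\tilde{T}_i)(t_{*,k-i})\;=\;L^{k-i}v_i+a_{k-i}(\gamma).$$
Subtracting the analogous identity for the primed variables, the shadow bound in the reformulation becomes
$$\bigl|L^{k-i}(v_i-v'_i)+(a_{k-i}(\gamma)-a_{k-i}(\gamma'))\bigr|\;\le\;B,$$
which, when $v_i=v'_i$, reduces precisely to $|a_{k-i}(\gamma)-a_{k-i}(\gamma')|\le B$.

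The backward implication is then immediate: given the reformulation with common $v_i$, the bound $|a_j(\gamma)-a_j(\gamma')|\le B$ holds for all $j=k-i$, and as $(i,k)$ ranges over $\N\times\N$, $j$ ranges over all of $\Z$ --- precisely $T\sim_{gs}T'$.

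For the forward implication, start with lifts $\tilde{T},\tilde{T}'$ realizing $T\sim_{gs}T'$ with bound $B_0$. For each $i\in\N$, pick any lift $\tilde{T}_i$ of $\Pb^{-i}(T)$ and read off $v_i\in\Gamma$ as above; similarly obtain $v'_i$ from a lift $\tilde{T}'_i$. Once a common $v_i=v'_i$ is achieved, the computation above shows the reformulation holds with $B=B_0$. Replacing $\tilde{T}_i$ by $\tilde{T}_i+w$ ($w\in\Z^{m+p}$) adjusts $v_i$ by $L^i\pi_V(w)\in L^i\Gamma$, and similarly on the primed side; thus $v_i$ and $v'_i$ are individually well-defined only modulo $L^i\Gamma$, and $v_i=v'_i$ is achievable iff $v_i-v'_i\in L^i\Gamma$.

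This integrality condition is the principal obstacle, nontrivial in the non-unimodular case where $[\Gamma:L^i\Gamma]=|\det L|^i$. I expect it to follow from the shadowing hypothesis together with the observation that $a_0(\tilde{\gamma}_i),a_0(\tilde{\gamma}'_i)\in\Gamma$ (branch points lift to $\Z^{m+p}$): the identity
$$a_0(\tilde{\gamma}_i)-a_0(\tilde{\gamma}'_i)\;=\;(a_{-i}(\gamma)-a_{-i}(\gamma'))+L^{-i}(v_i-v'_i)$$
has left side in $\Gamma$ and first term on the right of norm at most $B_0$, constraining $L^{-i}(v_i-v'_i)$ to lie within $B_0$ of a $\Gamma$-element. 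Refining the initial choice of $\tilde{T},\tilde{T}'$ (whose flexibility is part of the statement) allows one to absorb the residual discrepancy into a slightly enlarged bound $B$, yielding the stated reformulation.
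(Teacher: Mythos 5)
The paper offers no proof of this lemma at all --- it is introduced as an ``awkward (but straightforward) reformulation'' --- so there is no argument of record to compare against. Judged on its own terms, your proposal does correctly everything except the one step that requires an idea. The identity $\pi_V\circ\tilde{\Psi}_{\beta}^k(\tilde{T}_i)(t_{*,k-i})=L^{k-i}v_i+a_{k-i}(\gamma)$ is right (it uses that $\Pb^i(\pi_V\circ\tilde T_i)=v_i+\gamma$ and that $\Pb$ is invertible on strands), the backward implication is complete, and you have correctly isolated the crux of the forward implication: a choice of lift $\tilde T_i$ determines $v_i$ only modulo $L^i\Gamma$, so a common $v_i$ exists exactly when $v_i-v_i'\in L^i\Gamma$, equivalently when $a_{-i}(\gamma)-a_{-i}(\gamma')$ lies in $\Gamma$ rather than merely in $L^{-i}\Gamma$.

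The gap is that your proposed resolution of this congruence does not work. Knowing that $L^{-i}(v_i-v_i')$ lies within $B_0$ of an element of $\Gamma$ says nothing, since $\Gamma$ is a lattice in $V$ and every point of $V$ is within the covering radius of $\Gamma$; and ``absorbing the discrepancy into a slightly enlarged $B$'' cannot help, because $v_i\equiv v_i'\pmod{L^i\Gamma}$ is an exact condition in the finite group $\Gamma/L^i\Gamma$ of order $|\det L|^i$, unaffected by the size of the shadowing constant. (When $\beta$ is a unit, $L^i\Gamma=\Gamma$ and the congruence is automatic; the whole point of the lemma is the non-unit case, which is precisely where your argument stops.) Changing the lifts $\tilde T,\tilde T'$ does not help either: preserving global shadowing forces the two strands to be translated by the same element of $\Gamma$, which leaves $a_{-i}(\gamma)-a_{-i}(\gamma')$ unchanged. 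What actually has to be shown is that global shadowing forces $a_{-i}(\gamma)-a_{-i}(\gamma')\in\Gamma$ for all $i\ge0$; projecting by $\pi^u$, this says $t_*(\Pb^{-i}(T))-t_*(\Pb^{-i}(T'))\in\Z[\beta]$ and not just in $\beta^{-i}\Z[\beta]$, i.e.\ that $\sim_{gs}$ is invariant under $\Pb^{-1}$ when one insists on honest lifts. The classes of $a_{-i}(\gamma)-a_{-i}(\gamma')$ in $L^{-i}\Gamma/\Gamma$ form a backward $F_L$-orbit of $0$ in $V/\Gamma$, and $F_L$ is $|\det L|$-to-one, so nontrivial backward orbits exist and must be excluded by some argument using more than boundedness. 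Until that is supplied, the forward implication is not proved.
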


\begin{lemma} $\hat{\pi}(T)=\hat{\pi}(T')$ if and only if $T\sim_{gs} T'$.
\end{lemma}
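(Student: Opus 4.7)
The plan is to exploit the decomposition $\gamma = \gamma_T + G(\gamma)$ valid for every strand $\gamma$ corresponding to $T$. This yields
$$a_i(\gamma) - a_i(\gamma') \;=\; \bigl(a_i(\gamma_T) - a_i(\gamma_{T'})\bigr) \;+\; L^i\bigl(G(\gamma) - G(\gamma')\bigr), \qquad i \in \Z,$$
whose first summand is uniformly bounded by Lemma \ref{gs}. Consequently, for any lift-strands $\gamma, \gamma'$ of $T, T'$, the condition ``$|a_i(\gamma) - a_i(\gamma')|$ bounded'' is equivalent to ``$|L^i(G(\gamma) - G(\gamma'))|$ bounded'' and hence, by hyperbolicity of $L$ on $V$, to $G(\gamma) = G(\gamma')$. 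Meanwhile, the $\Pb$-equivariance $\gamma_{\Pb(T)} = \Pb(\gamma_T)$ combined with $\pi \circ \Pb = F_L \circ \pi$ (and $p(\Pb^i(T))\in\Gamma$) gives $\pi(\Pb^i(T)) = a_i(\gamma_T) + \Gamma$ for all $i \in \Z$, so $\hat\pi(T) = \hat\pi(T')$ translates into $a_i(\gamma_T) \equiv a_i(\gamma_{T'}) \pmod{\Gamma}$ over the relevant index range.

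For the direction $\hat\pi(T) = \hat\pi(T') \Rightarrow T \sim_{gs} T'$, pick any lifts $\tilde T, \tilde T'$ with strands $\gamma, \gamma'$. Evaluating the displayed splitting at $i = 0$ gives
$$G(\gamma) - G(\gamma') \;=\; \bigl(a_0(\gamma) - a_0(\gamma')\bigr) - \bigl(a_0(\gamma_T) - a_0(\gamma_{T'})\bigr),$$
where each term on the right lies in $\Gamma$: the first since $a_0$ of a lift-strand lies in $\pi_V(\Z^{m+p}) = \Gamma$, the second since $\hat\pi(T) = \hat\pi(T')$ forces $\pi(T) = \pi(T')$. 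Hence $G(\gamma) - G(\gamma') \in \Gamma$, and by choosing $a \in \Z^{m+p}$ with $\pi_V(a) = G(\gamma) - G(\gamma')$ and replacing $\tilde T'$ by $\tilde T' + a$ (again calling the resulting strand $\gamma'$) I arrange $G(\gamma) = G(\gamma')$. The splitting then collapses to $a_i(\gamma) - a_i(\gamma') = a_i(\gamma_T) - a_i(\gamma_{T'})$, which is uniformly bounded in $i$ by Lemma \ref{gs}, exhibiting $T \sim_{gs} T'$.

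For the direction $T \sim_{gs} T' \Rightarrow \hat\pi(T) = \hat\pi(T')$, the hyperbolicity argument immediately forces $G(\gamma) = G(\gamma')$, and so $a_i(\gamma_T) - a_i(\gamma_{T'}) = a_i(\gamma) - a_i(\gamma')$; evaluated at $i = 0$ this lies in $\Gamma$ since $a_0(\gamma), a_0(\gamma') \in \Gamma$ for lift-strands, yielding $\pi(T) = \pi(T')$. The main technical obstacle is upgrading this to $\pi(\Pb^{-i}(T)) = \pi(\Pb^{-i}(T'))$ for every $i \geq 0$, which is what $\hat\pi(T) = \hat\pi(T')$ actually requires: in the nonunit case $\tilde\Psi_\beta$ fails to be invertible on lifts, so $\Pb^{-i}(\gamma)$ need not come from a lift of $\Pb^{-i}(T)$ and the statement $a_{-i}(\gamma) \in \Gamma$ is not automatic. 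I would circumvent this by invoking Lemma \ref{reformulation}, which at each level $i \in \N$ supplies lift-strands $\gamma_i, \gamma'_i$ of $\Pb^{-i}(T), \Pb^{-i}(T')$ with uniformly bounded forward shadowing; reapplying the $i = 0$ argument of the previous paragraph to each pair $(\gamma_i, \gamma'_i)$ yields $\pi(\Pb^{-i}(T)) = \pi(\Pb^{-i}(T'))$ for every $i \geq 0$, completing the proof.
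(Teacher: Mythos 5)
Your overall architecture is the paper's: the same splitting $a_i(\gamma)-a_i(\gamma')=\bigl(a_i(\gamma_T)-a_i(\gamma_{T'})\bigr)+L^i\bigl(G(\gamma)-G(\gamma')\bigr)$, the same reduction of $\pi(T)=\pi(T')$ to a lattice congruence between $G(\gamma)$ and $G(\gamma')$ for lift-strands, and the same appeal to Lemma \ref{reformulation} to handle backward iterates in the non-unit case. Your forward direction is correct and in fact a little cleaner than the paper's: you verify the original two-sided definition of $\sim_{gs}$ directly, using only the $0$-th coordinate $\pi(T)=\pi(T')$ of the hypothesis, whereas the paper checks the reformulated conditions level by level.

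There is, however, a gap in the last step of the converse. For $i\ge1$, Lemma \ref{reformulation} bounds $|a_k(\gamma_i)-a_k(\gamma'_i)|$ only for $k\in\N$, i.e., it gives \emph{forward} shadowing. Feeding one-sided boundedness into the hyperbolicity argument kills only the unstable component: you obtain $G(\gamma_i)-G(\gamma'_i)\in E^s$, not $G(\gamma_i)=G(\gamma'_i)$. Since $E^s\cap\Gamma=\{0\}$, membership in $E^s$ alone does not yield the congruence $G(\gamma_i)-G(\gamma'_i)\in\Gamma$ that your computation of $\pi(\Pb^{-i}(T))$ requires, so ``reapplying the $i=0$ argument'' does not close the case $i\ge1$ as written. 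The repair is short and uses the datum you did not exploit: the reformulation supplies the \emph{same} $v_i$ for both tilings, $\tilde{\Psi}_{\beta}^i(\tilde{T}_i)=v_i+\tilde{T}$ and $\tilde{\Psi}_{\beta}^i(\tilde{T}'_i)=v_i+\tilde{T}'$. Hence $\Pb^i(\gamma_i)=\pi_V(v_i)+\gamma$ and $\Pb^i(\gamma'_i)=\pi_V(v_i)+\gamma'$, and the equivariances $G(\Pb^i\delta)=L^iG(\delta)$ and $G(x+\delta)=x+G(\delta)$ give $L^i\bigl(G(\gamma_i)-G(\gamma'_i)\bigr)=G(\gamma)-G(\gamma')=0$. (Equivalently: $a_k(\gamma_i)-a_k(\gamma'_i)=a_{k-i}(\gamma)-a_{k-i}(\gamma')$, so letting $i$ range over $\N$ upgrades the one-sided bounds to a two-sided bound for the single pair $(\gamma,\gamma')$; this is essentially how the paper proceeds, via the $\Pb$-invariance of $\sim_{gs}$.) With $G(\gamma_i)=G(\gamma'_i)$ and $a_0(\gamma_i),a_0(\gamma'_i)\in\Gamma$ in hand, your conclusion $\pi(\Pb^{-i}(T))=\pi(\Pb^{-i}(T'))$ for all $i\ge0$ follows as you intended.
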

\begin{proof}
Suppose that $\hat{\pi}(T)=\hat{\pi}(T')$ and fix $i\ge0$. Let $\gamma_{\Pb^{-i}(T)}=x+\pi_V\circ\tilde{T}_i$ and $\gamma_{\Pb^{-i}(T')}=x'+\pi_V\circ \tilde{S}_i$ with $\tilde{T}_i$ and $\tilde{S}_i$ lifts of $\Pb^{-i}(T)$ and $\Pb^{-i}(T')$, resp. Then $\pi(\Pb^{-i}(T))=\pi(\Pb^{-i}(T'))$ means that $x-x'\in\Gamma$. There is $B$ so that $|a(\gamma_T)|<B/2$ for all $T$ (Lemma \ref{gs}), so $|(a_k(\gamma_{\Pb^{-i}(T)}) -x)- (a_k(\gamma_{\Pb^{-i}(T')})-x)|=|(a_k(\gamma_{\Pb^{-i}(T)} -x))- (a_k(\gamma_{\Pb^{-i}(T')}-x))|<B$ for all $i,k\ge0$. Let $y\in\Z^{m+p}$ be such that $\pi_V(y)=x-x'$ and set $\tilde{T}_i':= y+\tilde{S}_i$. Then, with $t_{*,j}:=t_*(\Pb^j(T))$ and $t_{*,j}':=t_*(\Pb^j(T'))$, we have  $|\pi_V\circ\tilde{\Psi}_{\beta}^k(\tilde{T}_i)(t_{*,k-i})-\pi_V\circ\tilde{\Psi}_{\beta}^k(\tilde{T}'_i)(t'_{*,k-i})|=|(a_k(\gamma_{\Pb^{-i}(T)} -x))- (a_k(\gamma_{\Pb^{-i}(T')}-x))|<B$ for $k\in\N$. Letting $\tilde{T}=\tilde{T}_0$ and $\tilde{T}'=\tilde{T}_0'$ we have $\tilde{\Psi}_{\beta}^i(\tilde{T}_i)=\tilde{T}+v_i$ and  $\tilde{\Psi}_{\beta}^i(\tilde{T}'_i)=\tilde{T}'+v_i$ for some $v_i\in\Gamma$; hence $T\sim_{gs}T'$.

Now suppose that $T\sim_{gs}T'$ and let $B>0$, $v_i$, $\tilde{T}$, $\tilde{T}'$, $\tilde{T}_i$ and $\tilde{T}'_i$ be as in the reformulation of $\sim_{gs}$ (Lemma \ref{reformulation}).
Let $\gamma:=\pi_V\circ\tilde{T}$ and $\gamma':=\pi_V\circ\tilde{T}'$. Then $\Pb^{-i}(\gamma)=x_i+\pi_V\circ\tilde{T}_i$ and $\Pb^{-i}(\gamma')=x_i+\pi_V\circ\tilde{T}'_i$ with $x_i=-1/\beta^{i}\pi_V(v_i)$. Then $a(\gamma)\equiv a(\gamma') \mod(\Gamma)$ and from $|\pi_V\circ\tilde{\Psi}_{\beta}^k(\tilde{T}_i)(t_{*,k-i})-\pi_V\circ\tilde{\Psi}_{\beta}^k(\tilde{T}'_i)(t'_{*,k-i})|\le B$ for all $k\in\N$, we see that $G(\gamma)=G(\gamma')$. Thus, $a(\gamma_T)\equiv a(\gamma)-G(\gamma)\equiv a(\gamma')-G(\gamma')\equiv a(\gamma_{T'})\mod(\Gamma)$, so $\pi(T)=\pi(T')$. It is clear from the definition of $\sim_{gs}$ that $T\sim_{gs}T'\implies \Pb^{k}(T)\sim_{gs}\Pb^{k}(T')$ for all $k\in\Z$. Thus $\pi(\Pb^{-k}(T))=\pi(\Pb^{-k}(T'))$ for all $k\ge0$ and $\hat{\pi}(T)=\hat{\pi}(T')$. 
\end{proof}

Translated into the context of tilings-as-maps, the strong regional proximal relation reads as follows: $T\sim_{srp} T'$ provided for each $R>0$ there are $S_R,S'_R\in\Ob$ and $t_R\in\R$ so that $T(t)=S_R(t)$, $T'(t)=S'_R(t)$, and $S_R(t-t_R)=S'_R(t-t_R)$ for all $|t|\le R$.

\begin{theorem} \label{rp=gs}If $\beta$ is  Pisot then the strong regional proximal and global shadowing relations on $\Ob$ are the same.
\end{theorem}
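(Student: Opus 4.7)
The key insight is that the previous lemma identifies $\sim_{gs}$ with the equivalence relation given by the fibres of $\hat{\pi}$, while it is recalled in the Introduction and Section \ref{background} that $\sim_{srp}$ is the structure relation of the maximal equicontinuous factor map $\pi_{max}\colon\Omega_{\psi_\beta}\to\hat{\T}^d_\beta$. Thus it suffices to show that $\hat{\pi}$ and $\pi_{max}$ have the same fibres.

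\textbf{Direction $\sim_{srp}\Rightarrow\sim_{gs}$.} I would argue directly from the continuity and equivariance of $\hat{\pi}$. Given $T\sim_{srp}T'$, fix $R_n\to\infty$ together with triples $(S_n,S'_n,t_n)$ witnessing the srp condition. Then $S_n\to T$ and $S'_n\to T'$ in the tiling metric, and $d(S_n-t_n,\,S'_n-t_n)\to 0$. Continuity of $\hat{\pi}$ (Lemma \ref{factor}) then gives $\hat{\pi}(S_n-t_n)-\hat{\pi}(S'_n-t_n)\to 0$ in the topological group $\hat{\T}^d$. But the $\R$-action on $\hat{\T}^d$ is by group translation (Lemma \ref{preserves action}), so the $t_n$-translates cancel as group elements:
\[
\hat{\pi}(S_n-t_n)-\hat{\pi}(S'_n-t_n) \;=\; \hat{\pi}(S_n)-\hat{\pi}(S'_n) \;\longrightarrow\; \hat{\pi}(T)-\hat{\pi}(T').
\]
Hence $\hat{\pi}(T)=\hat{\pi}(T')$, and the previous lemma gives $T\sim_{gs}T'$.

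\textbf{Direction $\sim_{gs}\Rightarrow\sim_{srp}$.} Since $(\hat{\T}^d,\R)$ is a Kronecker action on a compact abelian group, it is equicontinuous. The universal property of the maximal equicontinuous factor then supplies a unique continuous $\R$-equivariant surjection $\phi\colon\hat{\T}^d_\beta\to\hat{\T}^d$ with $\hat{\pi}=\phi\circ\pi_{max}$, and the task reduces to showing $\phi$ is injective. Both targets are $d$-dimensional connected solenoids whose defining endomorphisms ($F_M$ on $\R^d/\Z^d$ and $F_L$ on $V/\Gamma$) share the characteristic polynomial $p_\beta$. Since $L$ and $M$ are rationally similar and the translation directions $\omega$ correspond under this similarity, the two Kronecker actions are minimal and carry the same group of continuous eigenvalues. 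The Halmos--von Neumann theorem then forces any continuous $\R$-equivariant surjection between them to be an isomorphism, so $\phi$ is bijective and $\sim_{gs}\subseteq\sim_{srp}$.

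\textbf{Main obstacle.} The technical heart lies in the injectivity of $\phi$: a priori $\phi$ could collapse a finite-order subgroup, making $\hat{\T}^d$ a proper quotient of $\hat{\T}^d_\beta$. Overcoming this requires carefully tracking the two defining lattices ($\Z^d$ versus $\Gamma$) under the rational similarity between $L$ and $M$, and verifying that the resulting eigenvalue subgroups of $\R$ are identical; once this is done, equivalence of the two equicontinuous systems (and hence the desired equality of fibres) is automatic.
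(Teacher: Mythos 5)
Your easy direction ($\sim_{srp}\Rightarrow\sim_{gs}$) is correct, and in fact cleaner than the paper's: the paper proves this by showing $\sim_{srp}$ on strands is invariant under $\Pb^{\pm1}$ (Lemma \ref{lemrp}) and that srp strands have uniformly bounded separation (Lemma \ref{B}), whereas you get it in one stroke from the continuity and $\R$-equivariance of $\hat{\pi}$ (Lemmas \ref{preserves action} and \ref{factor}). That part stands.

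The hard direction ($\sim_{gs}\Rightarrow\sim_{srp}$) has a genuine gap, and it sits exactly where you flag your ``main obstacle.'' Your reduction to the injectivity of $\phi:\hat{\T}^d_{\beta}\to\hat{\T}^d$ is legitimate, but the justification you offer --- that $L$ and $M$ are rationally similar, hence the two Kronecker actions ``carry the same group of continuous eigenvalues,'' hence Halmos--von Neumann applies --- does not go through as stated. The eigenvalue group of a solenoid $\inv F_L$ on $V/\Gamma$ depends on the lattice $\Gamma$ and not only on the conjugacy class of $L$: two commensurable $L$-invariant lattices generally produce \emph{different} subgroups of $\R$ as eigenvalue groups (already for $L=2$ on $\R$, the lattices $\Z$ and $\tfrac13\Z$ give $\Z[1/2]$ versus $\tfrac13\Z[1/2]$), even though the solenoids are abstractly isomorphic. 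So ``rationally similar with corresponding $\omega$'' does not yield equality of the eigenvalue subgroups of $\R$, and without that equality Halmos--von Neumann does not force $\phi$ to be injective --- $\phi$ could a priori be a nontrivial group quotient. Establishing the needed equality amounts to computing the full eigenvalue group of $(\Ob,\R)$ and matching it to $\dir((L^*)^{-n}\Gamma^*)$ paired with $\omega$; this is essentially equivalent to the statement $\hat{\pi}=\pi_{max}$, which the paper obtains as Corollary \ref{pihat is pimax} \emph{from} Theorem \ref{rp=gs}, so your route is circular unless you supply an independent proof. The paper instead proves this direction directly and geometrically: it shows (Lemma \ref{can get everywhere}) that after applying $\Pb^K$ one can pass between any two segments of $\Sigma$ by a bounded chain of strands, uses the globally-shadowed pair to produce, for each $i$, a pair of segments $(\sigma_{(K,i)},\sigma'_{(K,i)})$ agreeing up to $\Gamma$-translation, and then converts a chain of length $n$ joining them into the chain of tilings required by Lemma \ref{rp by steps} to certify $T\sim_{srp}T'$. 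That combinatorial construction is the real content of the theorem and is absent from your proposal.
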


This can be deduced (at least, in the unimodular case) as a corollary to Propositions 24 and 25 of \cite{BG} which combine for a similar result in the higher dimensional setting. We give a simplified argument for the one-dimensional case at hand.

Let's say that strands $\gamma,\gamma'$ corresponding to $T,T'\in\Ob$ are strong regionally proximal, and write $\gamma\sim_{srp}\gamma'$, provided there are: lifts $\tilde{T},\tilde{T}'$ of $T,T'$ and $x\in V$ with $\gamma=x+\pi_V(\tilde{T}),\gamma'=x+\pi_V(\tilde{T}')$; and, for each $R>0$, there are $t_R\in\R$ and $S_R,S'_R\in\Ob$ with lifts $\tilde{S}_R, \tilde{S}'_R$ so that $\tilde{T}(t)=\tilde{S}_R(t),\tilde{T}'(t)=\tilde{S}'_R(t), \tilde{S}_R(t-t_R)=\tilde{S}'_R(t-t_R)$, for all $|t|\le R$.

\begin{lemma} \label{B} There is $B\in\R$ so that if $\gamma\sim_{srp}\gamma'$, then $|\gamma(0)-\gamma'(0)|\le B$.
\end{lemma}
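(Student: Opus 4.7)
Since both strands share the common base point $x\in V$, the difference
\[
\gamma(0)-\gamma'(0)=\pi_V\bigl(\tilde T(0)-\tilde T'(0)\bigr)
\]
depends only on the lifts $\tilde T,\tilde T'$, so it suffices to bound this vector in $V=E^u\oplus E^s$. I plan to bound the two summands separately: the unstable part will vanish exactly by (\ref{gamma^u}), and the stable part will be bounded uniformly using the Meyer property of Pisot substitution tilings.

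Fix any $R>0$ and let $t_R,\tilde S_R,\tilde S'_R$ be as in the definition of $\gamma\sim_{srp}\gamma'$: on $|t|\le R$ we have $\tilde T(t)=\tilde S_R(t)$ and $\tilde T'(t)=\tilde S'_R(t)$, and $\tilde S_R(s)=\tilde S'_R(s)$ for $s\in[-R-t_R,R-t_R]$. The point $s=-t_R$ lies in this second interval for every $R\ge 0$, so
\[
\tilde S_R(-t_R)=\tilde S'_R(-t_R).
\]
Applying (\ref{gamma^u}) to the strands $\pi_V\circ\tilde S_R$ and $\pi_V\circ\tilde S'_R$, the $t_R\omega$ offsets cancel and one obtains
\[
\pi^u\bigl(\tilde S_R(0)\bigr)-\pi^u\bigl(\tilde S'_R(0)\bigr)=\pi^u\bigl(\tilde S_R(-t_R)\bigr)-\pi^u\bigl(\tilde S'_R(-t_R)\bigr)=0.
\]
Using $\tilde T(0)=\tilde S_R(0)$ and $\tilde T'(0)=\tilde S'_R(0)$, the unstable parts of $\tilde T(0)$ and $\tilde T'(0)$ agree exactly.

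For the stable component, $\pi^s(\tilde S_R(-t_R))=\pi^s(\tilde S'_R(-t_R))$ rewrites
\[
\pi^s\bigl(\tilde T(0)-\tilde T'(0)\bigr)=\Bigl[\pi^s\bigl(\tilde S_R(0)\bigr)-\pi^s\bigl(\tilde S_R(-t_R)\bigr)\Bigr]-\Bigl[\pi^s\bigl(\tilde S'_R(0)\bigr)-\pi^s\bigl(\tilde S'_R(-t_R)\bigr)\Bigr].
\]
By the Meyer property there is a constant $M$, depending only on $\pb$, with $|\pi^s(\tilde U(t)-\tilde U(s))|\le M$ for every $U\in\Ob$, every lift $\tilde U$, and all $t,s\in\R$ -- equivalently, the stable projection of any lifted tiling takes values in the compact Rauzy fractal, with uniform diameter over $\Ob$. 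Bounding each of the two bracketed terms by $M$ gives $|\pi^s(\tilde T(0)-\tilde T'(0))|\le 2M$, and together with the unstable vanishing this yields $|\gamma(0)-\gamma'(0)|\le 2M$. One may then take $B:=2M$.

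The only nonroutine ingredient is the Meyer bound on stable excursions, which is standard for Pisot substitution tilings. Everything else is algebra with the decomposition $V=E^u\oplus E^s$, the identity (\ref{gamma^u}), and the exact equality of lifts on overlapping intervals supplied by SRP. Note that the argument does not use invertibility of $\tilde\Psi_\beta$, so it applies equally well in the nonunimodular case.
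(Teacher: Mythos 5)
Your argument is correct and is essentially the paper's own proof: you split $\gamma(0)-\gamma'(0)$ into unstable and stable components, make the unstable part vanish exactly via (\ref{gamma^u}) together with the agreement of the lifts $\tilde S_R$ and $\tilde S'_R$ at $-t_R$, and bound the stable part by twice a uniform bound on the stable excursion of strands. The only difference is that the paper actually proves that uniform stable bound (any compact piece of a strand sits inside a translate of $\tilde{f}_{\pb}^{kn}$ applied to a single petal, and $L$ contracts $E^s$, giving a geometric-series estimate), whereas you quote it as a consequence of the Meyer property -- which has the logical arrow reversed, since for Pisot substitutions the bounded stable excursion is what yields the Meyer property; the fact itself is standard, so this is a citation issue rather than a gap.
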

\begin{proof}
Let $n\in\N$ and $\rho<1$ be so that $|(Lx)^s|\le\rho$ for all $x\in X$ with $|x^s|\le 1$. Let $B_1$ be such that if $j:[a,b]\to\mathcal{R}_{\pb}$ is any parameterization of a petal of $\mathcal{R}_{\pb}$ with lift $\tilde{j}$, then $|\pi^s(\pi_V(\tilde{f}_{\pb}^n(\tilde{j}(t_1))))-\pi^s(\pi_V(\tilde{f}_{\pb}^n(\tilde{j}(t_2))))|\le B_1$ for all $t_1,t_2\in[a,b]$. Now take $B$ large enough so that $B/2-\rho B/2\ge B_1$. Then if $\gamma$ is any strand corresponding to an element of $\Ob$, $|\gamma^s(t_1)-\gamma^s(t_2)|\le B/2$ for all $t_1,t_2\in\R$ (since any compact piece of the image of $\gamma$ is contained in a translation of some $\tilde{f}^{kn}(\tilde{j}([a,b])$). Now if $\gamma$ and $\gamma'$ are strong regionally proximal, with $S_1,S'_1$ as in the above definition and $R=1$, then $|(\gamma(0))^s-(\pi_V(\tilde{S}(t_1)))^s|\le\frac{B}{2}$ and $|(\gamma'(0))^s-(\pi_V(\tilde{S}'(t_1)))^s|\le\frac{B}{2}$. Since $\tilde{S}(t_1)=\tilde{S}'(t_1)$, we have $(\gamma(0))^u=(\gamma(0))^u$ and $|(\gamma(0))^s-(\gamma'(0))^s|\le B$.

\end{proof}

\begin{lemma} \label{lemrp} If $\gamma\sim_{srp}\gamma'$ then $\Pb(\gamma)\sim_{srp}\Pb(\gamma')$ and $\Pb^{-1}(\gamma)\sim_{srp}\Pb^{-1}(\gamma')$.
\end{lemma}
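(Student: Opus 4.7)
The forward direction is immediate by pushing witnesses through $\tilde{\Psi}_\beta$. Given witnesses $\tilde{T},\tilde{T}',x,\tilde{S}_R,\tilde{S}'_R,t_R$ for $\gamma\sim_{srp}\gamma'$, the new lifts $\tilde{\Psi}_\beta(\tilde{T}),\tilde{\Psi}_\beta(\tilde{T}')$ of $\Pb(T),\Pb(T')$ together with the common vector $Lx$ realize $\Pb(\gamma)$ and $\Pb(\gamma')$; for each $R'>0$, taking $R=R'$ in the original, the triple $\tilde{\Psi}_\beta(\tilde{S}_R),\tilde{\Psi}_\beta(\tilde{S}'_R),\beta t_R$ serves as a radius-$R'$ witness. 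Since $\tilde{\Psi}_\beta(\tilde{V})(s)=\tilde{f}_{\pb}(\tilde{V}(s/\beta))$, the three pointwise equalities at parameters in $[-R,R]$ automatically upgrade to pointwise equalities of images at parameters in $[-\beta R,\beta R]\supseteq[-R',R']$.

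The backward direction is more delicate because when $\beta$ is non-unit, $\tilde{\Psi}_\beta$ need not be surjective on lifts: as $\tilde{V}$ ranges over lifts of $\Pb^{-1}(U)$, the constant $\tilde{\Psi}_\beta(\tilde{V})-\tilde{U}\in\Z^{m+p}$ ranges over a coset $C(\tilde{U})$ of $A\Z^{m+p}$ in $\Z^{m+p}$. The key local fact is that $C(\tilde{U})=C(\tilde{U}')$ whenever lifts $\tilde{U},\tilde{U}'$ of tilings $U,U'$ agree on an interval wide enough that $\Pb^{-1}(U)$ and $\Pb^{-1}(U')$ also agree on a subinterval (this holds for sufficiently wide intervals, by unique recognizability of primitive non-periodic substitutions). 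Indeed, picking compatible lifts $\tilde{V},\tilde{V}'$ of $\Pb^{-1}(U),\Pb^{-1}(U')$ that coincide on the scaled-down subinterval gives $\tilde{\Psi}_\beta(\tilde{V})=\tilde{\Psi}_\beta(\tilde{V}')$ on its scaled-up image, and matching against $\tilde{U}+w$ and $\tilde{U}'+w'$ there forces $w=w'$.

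Fix an $R$ large enough for the local claim to apply; the chain of overlaps $\tilde{T}=\tilde{S}_R$ on $[-R,R]$, $\tilde{S}_R=\tilde{S}'_R$ on $[-R-t_R,R-t_R]$, and $\tilde{S}'_R=\tilde{T}'$ on $[-R,R]$ then yields $C(\tilde{T})=C(\tilde{S}_R)=C(\tilde{S}'_R)=C(\tilde{T}')$. Choosing any $w$ in this common coset and lifts $\tilde{T}^-,\tilde{T}'^-$ of $\Pb^{-1}(T),\Pb^{-1}(T')$ with $\tilde{\Psi}_\beta$-images $\tilde{T}+w$ and $\tilde{T}'+w$ gives $\Pb^{-1}(\gamma)=y+\pi_V\circ\tilde{T}^-$ and $\Pb^{-1}(\gamma')=y+\pi_V\circ\tilde{T}'^-$ with common vector $y:=L^{-1}(x-\pi_V(w))$. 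For witnesses at any radius $R^->0$, take $R=\beta R^-$ (large enough), pick lifts $\tilde{S}^-_R,\tilde{S}'^-_R$ of $\Pb^{-1}(S_R),\Pb^{-1}(S'_R)$ whose $\tilde{\Psi}_\beta$-images are $\tilde{S}_R+w,\tilde{S}'_R+w$ (possible, since $w\in C(\tilde{S}_R)=C(\tilde{S}'_R)$), and set $t^-_R:=t_R/\beta$. Injectivity of $\tilde{\Psi}_\beta$ on lifts --- which holds because a lifted path is uniquely recoverable from the supertile sequence of its image --- then converts each of the three witness equalities on $[-R,R]$ into the corresponding pulled-back equality on $[-R^-,R^-]$. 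I expect the main obstacle to be the coset matching argument (and in particular a careful invocation of unique recognizability); the unit case $A\Z^{m+p}=\Z^{m+p}$ makes $C(\tilde{U})=\Z^{m+p}$ trivially, and the argument is then immediate.
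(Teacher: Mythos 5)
Your overall strategy coincides with the paper's: the forward direction is the straightforward push-forward of witnesses through $\tilde{\Psi}_{\beta}$, and the backward direction is handled by lifting $\Pb^{-1}(T)$, $\Pb^{-1}(S_{\beta R})$, $\Pb^{-1}(S'_{\beta R})$, $\Pb^{-1}(T')$ successively so that they agree on the scaled-down overlaps. Your explicit treatment of the coset $C(\tilde{U})$ of $A\Z^{m+p}$ is a useful elaboration of what the paper compresses into ``projecting to strands, and appropriately translating'': the chain $C(\tilde{T})=C(\tilde{S}_{\beta R})=C(\tilde{S}'_{\beta R})=C(\tilde{T}')$ is exactly what is needed to produce a single translation vector $y$ serving both $\Pb^{-1}(\gamma)$ and $\Pb^{-1}(\gamma')$, and it is the only point where the non-unit case genuinely differs.

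One step is not right as stated: $\tilde{\Psi}_{\beta}$ is in general \emph{not} injective on lifts. Since $\tilde{\Psi}_{\beta}(\tilde{V}+c)=\tilde{\Psi}_{\beta}(\tilde{V})+Ac$ for $c\in\Z^{m+p}$, any nonzero $c\in\ker A\cap\Z^{m+p}$ gives two distinct lifts of the same tiling with identical images, and $A$ can indeed be singular for $\beta$-substitutions (in Example \ref{asymptotic example} the row of the abelianization corresponding to $3\to 2341$ is the sum of the rows for $1\to 12$ and $2\to 34$). So ``a lifted path is uniquely recoverable from the supertile sequence of its image'' fails: at a vertex of the image one reads off only $Aa$, not $a$. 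The repair is to run your own ``key local fact'' argument in the final step as well: choose the lifts $\tilde{S}^-_R,\tilde{S}'^-_R$ by unique path lifting so that they agree with $\tilde{T}^-$, with each other after translating by $t_R/\beta$, and with $\tilde{T}'^-$ on the recognizability-shrunken overlaps, and only then verify that their $\tilde{\Psi}_{\beta}$-images are $\tilde{S}_R+w$ and $\tilde{S}'_R+w$ by evaluating at one point of each overlap. (You should also shrink $[-R^-,R^-]$ by the recognizability radius, or take $R$ somewhat larger than $\beta R^-$; the paper elides this as well.) With that reordering the argument is complete and matches the paper's.
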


\begin{proof} Suppose that $\gamma$ and $\gamma'$ correspond to $T,T'\in\Ob$ and $\gamma\sim_{srp}\gamma'$. Let $R>0$ be given and let $S_{\beta R},S'_{\beta R}\in\Ob$, $t_{\beta R}\in\R$, be as in the above definition of $\sim_{srp}$, with $\beta R$ replacing $R$. We may then lift $\Pb^{-1}(T),\Pb^{-1}(S_{\beta R}), \Pb^{-1}(S'_{\beta R})$, and $\Pb^{-1}(T')$, successively, so that $\widetilde{\Pb^{-1}(T)}(t)=\widetilde{\Pb^{-1}(S_{\beta R})}(t)$, $\widetilde{\Pb^{-1}(S_{\beta R})}(t-(1/\beta)t_{\beta R})=\widetilde{\Pb^{-1}(S_{\beta R})}(t-(1/\beta)t_{\beta R})$, and $\widetilde{\Pb^{-1}(T')}(t)=\widetilde{\Pb^{-1}(S'_{\beta R})}(t)$ for all $|t|\le R$. Projecting to strands, and appropriately translating, shows that $\Pb^{-1}(\gamma)\sim_{srp}\Pb^{-1}(\gamma')$. The argument that $\Pb(\gamma)\sim_{srp}\Pb(\gamma')$ is similar.

\end{proof}

Recall that $\sigma_i:=\{te_i:0\le t\le 1\}\subset \R^{m+p}$, where $e_i$ is the $i$-th standard unit vector. By {\em the segment of type $i$ at $x\in\Gamma$} we will mean $(x+\pi_V(\sigma_i),i)$.
Let $\Sigma:=\{(\pi_V(y+\sigma_i),i):y\in\Z^{m+p},\,i=1,\ldots,m+p\}$ be the collection of all {\em segments}.
If $\gamma=\pi_V\circ\tilde{T}$ is a strand with vertices in $\Gamma$, we may view the image of $\gamma$ as a collection of  segments. Let's say that {\em we can get from segment $\sigma$ to segment $\sigma'$ in one step} if there is a strand $\gamma$ with $\sigma$ and $\sigma'$ in the image of $\gamma$, and let's say {\em we can get from $\sigma$ to $\sigma'$ in
$n$ steps} if there are segments $\sigma^0=\sigma,\ldots, \sigma^n=\sigma'$ so that we can get from $\sigma^{i-1}$ to $\sigma^i$ in one step for $i=1,\ldots,n$. 

$\Pb$ induces a map that takes segments to collections of segments: if $\sigma$ is a segment defined by the image of $\pi_V\circ \tilde{T}$ on the interval $[t_1,t_2]$, then $\Pb(\sigma)$ is the collection of segments defined by the image of $\pi_V\circ \tilde{\Psi}_{\beta}(\tilde{T})$ on the interval $[\beta t_1,\beta t_2]$. This map has the property that if $\sigma'\in\Pb(\sigma)$ then $\Pb(\Sigma_{\sigma})\subset \Sigma_{\sigma'}$.

\begin{lemma} \label{can get everywhere} If $\beta>1$ is Pisot then there is $K\in\N$ so that for each $\sigma,\sigma'\in\cup_{k\ge K}\Pb^k(\Sigma)$ there is $n\in\N$ so that we can get from $\sigma$ to $\sigma'$ in $n$ steps. 
\end{lemma}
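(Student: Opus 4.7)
The plan is to combine two simple structural facts about strands under $\Pb$ with primitivity of $\pb$. The facts are: (a) for any base segment $\nu\in\Sigma$ and any $k\ge 1$, the entire collection $\Pb^k(\nu)$ lies on the single strand $\Pb^k(\gamma_\nu)$ (where $\gamma_\nu$ is any strand through $\nu$), so its elements are pairwise one-step reachable; and (b) if $\nu,\nu'\in\Sigma$ share a strand $\gamma$ corresponding to some $T\in\Ob$, then for every $k$ the set $\Pb^k(\nu)\cup\Pb^k(\nu')$ lies on the single strand $\Pb^k(\gamma)$, so elements of $\Pb^k(\nu)$ and $\Pb^k(\nu')$ are mutually one-step reachable. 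Both are immediate from the definitions of $\Pb$ on strands and on segments.

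Next, I would choose $K\in\N$ by primitivity, large enough that $\pb^K(i)$ contains every letter for every $i\in\A$ and that every sub-patch of $\Pb^K(\tau_i)$ extends to a tiling in $\Ob$. Then every segment in $\bigcup_{k\ge K}\Pb^k(\Sigma)$ lies on an actual strand, i.e.\ is ``realizable''. Given $\sigma,\sigma'\in\bigcup_{k\ge K}\Pb^k(\Sigma)$, write $\sigma\in\Pb^k(\mu)$ and $\sigma'\in\Pb^k(\mu')$, equalizing exponents by applying further $\Pb$-iterations if necessary and invoking (a). By (b) it then suffices to produce a finite chain $\mu=\mu_0,\mu_1,\ldots,\mu_n=\mu'$ of realizable base segments in $\Sigma$ with each consecutive pair lying on a common strand.

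For the base-level chaining, I would fix a reference strand $\gamma_*=\pi_V\circ\tilde{T}_*$ for a periodic tiling $T_*\in\Ob$ (say $T_*=T_1$). By primitivity $\gamma_*$ traverses every prototile type, hence carries segments of every type at various $\Gamma$-translated positions. Given any realizable $\mu\in\Sigma$, pick a strand $\gamma$ through $\mu$, corresponding to some $T\in\Ob$. Since $(\Ob,\R)$ is minimal, the orbit $\{T-t:t\in\R\}$ accumulates on $T_*$; by the Meyer property and finite local complexity, one can select $t\in\R$ and $v\in\Z^{m+p}$ so that the shifted strand $v+\pi_V\circ\widetilde{T-t}$ agrees with $\gamma_*$ on one finite sub-patch (producing one common segment) and with $\gamma$ on another finite sub-patch containing $\mu$ (producing another common segment). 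This intermediate strand produces a two-step chain from $\mu$ to $\gamma_*$; applying the same construction to $\mu'$ gives a chain $\mu\to\gamma_*\to\mu'$ of at most four strands.

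The main obstacle is the last step: ensuring that the translated lift $v+\pi_V\circ\widetilde{T-t}$ actually shares the specific segments $\mu$ (with $\gamma$) and a chosen segment of $\gamma_*$ (with $\gamma_*$), rather than merely being $V$-close. This requires exploiting the density of $\Gamma=\pi_V(\Z^{m+p})$ in $V$ (via irrationality of $\omega$, cf.\ Lemma \ref{factor}) together with the Pisot hyperbolicity of $L$: contraction on $E^s$ lets stable components be matched exactly via integer lattice shifts $v$, while matching on the unstable direction $E^u$ is adjusted by the translation parameter $t$. Compactness of $\Ob$ and minimality of the translation action then furnish the required $(t,v)$ from purely finite local combinatorics, closing the argument.
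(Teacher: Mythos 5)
There is a genuine gap, and it sits exactly where you flag ``the main obstacle.'' Your reduction to base-level chaining asks for a chain of segments in $\Sigma$ with consecutive pairs lying on a common strand, and your mechanism for producing the intermediate strand is $v+\pi_V\circ\widetilde{T-t}$ with $t$ supplied by minimality. But a strand of $T-t$ has the \emph{same image} in $V$ as a strand of $T$ (translating the tiling only reparameterizes the curve), so $v+\pi_V\circ\widetilde{T-t}$ is just a $\Gamma$-translate of the curve of $\gamma$ itself; it cannot contain the specific segment $\mu$ of $\gamma$ \emph{and} a specific segment of $\gamma_*$ unless some $\Gamma$-translate of $\gamma$ already meets $\gamma_*$ in a segment, which is what you are trying to prove. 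Minimality gives tilings in the orbit of $T$ that are close to $T_*$ in the tiling metric, but closeness of strands in $V$ is not the same as sharing a segment at an exact lattice position, and the appeal to ``density of $\Gamma$, hyperbolicity, compactness'' does not bridge this: the obstruction is combinatorial, not metric. Concretely, two segments emanating from a common vertex (two possible continuations of a strand) never lie on a common strand, and whether their images under $\Pb^k$ eventually share a segment is precisely a coincidence condition on the substitution. Nothing in your argument supplies such a coincidence; primitivity, minimality, and the Meyer property alone would ``prove'' the statement with $K=0$ and for arbitrary Pisot substitutions, which should be a warning sign, since the iterate $\Pb^K$ in the statement is there exactly because base-level strand-connectivity can fail.

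The paper's proof makes this structure explicit: it partitions $\Sigma$ into reachability classes $\Sigma_\sigma$, observes that $\Pb$ maps classes into classes and that the class of the type-$1$ segment at the origin is $\Pb$-absorbing, and then merges classes using a genuine coincidence input --- for $m=0$ the fact that every $\pb^k(i)$ eventually begins with the same initial segment, and for $m>0$ Lemma \ref{initial stability}, which guarantees that $\Pb^k$ of any two segments with a common initial vertex share a segment for $k\ge K$. Connectedness of $\pi_V(\tilde{\mathcal{R}}_{\pb})$ then propagates the merging to all of $\Pb^k(\Sigma)$. To repair your argument you would need to replace the ``translate and shadow'' step by an appeal to exactly this kind of coincidence statement (or to strong regional proximality of the periodic tilings $T_i$, $T_j^0$), at which point you would essentially be reproducing the paper's proof.
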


\begin{proof}
Given a segment $\sigma$, let $\Sigma_{\sigma}$ be the collection of all $\sigma'\in\Sigma$ for which there is $n\in\N$ such that we can get from $\sigma$ to $\sigma'$ in $n$ steps. It is clear that:

\begin{enumerate}
 \item  the $\Sigma_{\sigma}$ partition $\Sigma$,
\item for each $\sigma$ there are $\sigma',\sigma''\in\Sigma_{\sigma}$ so that the terminal vertex of $\sigma'$ is the initial vertex of $\sigma$ and the initial vertex of $\sigma''$ is the terminal vertex of $\sigma$, and 
\item if $\sigma'\in\Pb(\sigma)$, then $\Pb(\Sigma_{\sigma})\subset\Sigma_{\sigma'}$.
\end{enumerate}

Let's take $\sigma=(\pi_V(\sigma_1,1))$ so that $\Pb(\Sigma_{\sigma})\subset \Sigma_{\sigma}$. Note that if $\beta$ is a simple Parry number ($m$=0), there is $K\in\N$ so that if $i\in\{1,\ldots,p\}$ then $\sigma\in\Pb^k(\pi_V((\sigma_i,i)))$ for all $k\ge K$; and if $\beta$ is non-simple, it follows from Lemma \ref{initial stability} that there is $K\in\N$ so that $\Pb^k(\pi_V((\sigma_i,i)))\cap\Pb^k(\pi_V((\sigma_j,j)))\ne\emptyset$ for all $i,j\in\{1,\ldots,m+p\}$ and $k\ge K$.  From this (and items (1)-(3) above) it follows that if $\sigma'$ and $\sigma''$ have a common vertex, then there is $\sigma'''$ so that $\Pb^K(\Sigma_{\sigma'})\cup\Pb^K(\Sigma_{\sigma''})\subset\Sigma_{\sigma'''}$ with $K$ as above. From connectedness of $spt(\Sigma):=\pi_V(\tilde{\mathcal{R}}_{\pb})$ we have $\Pb^k(\Sigma)\subset \Sigma_{\sigma}$ for all $k\ge K$.
\end{proof}

Given the segment $\sigma=(x+\pi_V(\sigma_i),i)$, let's call $x+\pi_V(\sigma_i)$ the {\em support} of $\sigma$ (denoted $spt(\sigma)$) and $i$ its {\em type}. We'll say that the strand $\gamma=\pi_V\circ \tilde{T}$ {\em parameterizes $\sigma$ on $[s,s']$} if $\gamma([s,s'])=spt(\sigma)$ and $T([s,s'])$ is the $i$-th loop of $\mathcal{R}_{\pb}$. Note that if $\gamma$ parameterizes $\sigma$ on $[s,s']$, then $\gamma-t$ parameterizes $\sigma$ on $[s-t,s'-t]$. Suppose now that we can get from $\sigma$ to $\sigma'$ in $n$ steps. There are then segments $\sigma=\sigma^0,\ldots,\sigma^n=\sigma'$ and strands $\gamma^1,\ldots,\gamma^n$ so that (on some intervals) $\gamma^i$ parameterizes both $\sigma^{i-1}$ and $\sigma^i$ for $i=1,\ldots,n$. By replacing $\gamma^i$ by $\gamma^i-t_i$, with the appropiately chosen $t_i$ for $i=2,\ldots,n$, we may arrange that: $\gamma^1$ parameterizes $\sigma^0$ on $[s_0,s_0']$ and $\sigma^1$ on $[s_1,s_1']$; $\gamma^2$ parameterizes $\sigma^1$ on $[s_1,s_1']$ and $\sigma^2$ on $[s_2,s_2']$; ... ; and $\gamma^n$ parameterizes $\sigma^{n-1}$ on $[s_{n-1},s_{n-1}']$ and $\sigma_n$ on $[s_n,s_n']$. Then if $\gamma^i=x_i+\pi_V\circ \tilde{S_i}$, the tilings $S_i$ have the property: $S_i(t)=S_{i+1}(t)$ for $t\in [s_i,s_i']$ and $i=1,\ldots,n-1$.

\begin{lemma} \label{rp by steps} Suppose that $T,T'\in\Ob$ and there is $n\in\N$ so that, for each $R>0$, 
there are $S_i=S_i(R)\in\Ob$, $i=1,\ldots,n$,  and $t_i=t_i(R)\in\R$, $i=1,\ldots,n+1$, so that for all $|t|<R$: $(T-t_1)(t)=(S_1-t_1)(t)$; $(S_i-t_{i+1})(t)=S_{i+1}-t_{i+1})(t)$, $i=1,\ldots, n-1$; and $(S_{n}-t_{n+1})(t)=(T'-t_{n+1})(t)$. Then $T\sim_{srp}T'$.
\end{lemma}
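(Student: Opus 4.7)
The plan is to invoke the characterization, recalled in the introduction, that $\sim_{srp}$ is precisely the structure relation for the maximal equicontinuous factor map $\pi_{max}:\Ob\to\hat{\T}^d_{\beta}$: that is, $T\sim_{srp}T'$ iff $\pi_{max}(T)=\pi_{max}(T')$. It therefore suffices to show the single, $R$-independent equality $\pi_{max}(T)=\pi_{max}(T')$.

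For each $R>0$, fix $S_i=S_i(R)$ and $t_j=t_j(R)$ furnished by the hypothesis. Rewriting the agreement conditions, one has that $T-t_1$ and $S_1-t_1$ coincide on $(-R,R)$, that $S_i-t_{i+1}$ and $S_{i+1}-t_{i+1}$ coincide on $(-R,R)$ for $1\le i\le n-1$, and that $S_n-t_{n+1}$ and $T'-t_{n+1}$ coincide on $(-R,R)$. Two tilings that agree on $(-R,R)$ are at tiling-metric distance at most $\epsilon(R)$, where $\epsilon(R)\to 0$ as $R\to\infty$; so each consecutive pair in the chain is close in $\Ob$.

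Because $\pi_{max}$ is continuous on the compact space $\Ob$ it is uniformly continuous, with some modulus $\mu$ satisfying $\mu(\delta)\to 0$ as $\delta\to 0$. Combining this with the $\R$-equivariance $\pi_{max}(U-t)=\pi_{max}(U)-t\omega$ (where $\omega$ is the positive Perron eigenvector) and the fact that the Kronecker $\R$-action on $\hat{\T}^d_{\beta}$ is by isometries, each of $d(\pi_{max}(T),\pi_{max}(S_1))$, $d(\pi_{max}(S_i),\pi_{max}(S_{i+1}))$ for $1\le i\le n-1$, and $d(\pi_{max}(S_n),\pi_{max}(T'))$ is bounded by $\mu(\epsilon(R))$.

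The triangle inequality then yields $d(\pi_{max}(T),\pi_{max}(T'))\le (n+1)\mu(\epsilon(R))$. The left-hand side is independent of $R$, while the right-hand side tends to $0$ as $R\to\infty$; hence $\pi_{max}(T)=\pi_{max}(T')$, and thus $T\sim_{srp}T'$. The only point demanding care is to observe that although the $S_i$ and $t_j$ may depend on $R$, the bound at every $R$ constrains the same fixed quantity $d(\pi_{max}(T),\pi_{max}(T'))$; this is the whole point of routing the argument through the factor map rather than attempting a direct one-step construction from the $n$-step chain.
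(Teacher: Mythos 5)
Your proof is correct and follows essentially the same route as the paper: both arguments pass through the identification of $\sim_{srp}$ with the fibers of $\pi_{max}$, use uniform continuity of $\pi_{max}$ together with its $\R$-equivariance and the isometric Kronecker action to bound each link of the chain, and conclude by the triangle inequality that the fixed quantity $d(\pi_{max}(T),\pi_{max}(T'))$ is smaller than any positive number. The only difference is bookkeeping (you let $R\to\infty$ for a fixed modulus, the paper fixes $\epsilon$ and chooses $R$), which is immaterial.
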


\begin{proof} Let $\epsilon>0$ be given. There is then $R>0$ so that if $S,S'\in\Ob$ are such that 
$S(t)=S'(t)$ for $|t|<R$, then $d(\pi_{max}(S),\pi_{max}(S'))<\epsilon/(n+1)$. Since $\R$ acts by isometries on $\hat{\T}^d_{\beta}$  and $\pi_{max}(S-t)=\pi_{max}(S)-t$, we have, for such $S,S'$, that $d(\pi_{max}(S-t),\pi_{max}(S'-t))<\epsilon/(n+1)$ for all $t\in\R$. For this $R$, let the $S_i(R)$ and $t_i(R)$ be as hypothesized. Then: $d(\pi_{max}(T),\pi_{max}(S_1))<\epsilon/(n+1)$; $d(\pi_{max}(S_i)\pi_{max}(S_{i+1}))<\epsilon/(n+1)$, $i=1,\ldots,n-1$; and $d(\pi_{max}(S_n),\pi_{max}(T'))<\epsilon/(n+1)$. Hence $d(\pi_{max}(T),\pi_{max}(T'))<\epsilon$, so $\pi_{max}(T)=\pi_{max}(T')$ and $T\sim_{srp}T'$.

\end{proof}

\begin{proof} (Of Theorem \ref{rp=gs})
Suppose that $T,T'\in \Ob$  with $T\sim_{rp}T'$. There are then corresponding strands $\gamma,\gamma'$ that are strong regionally proximal. By Lemma \ref{lemrp}, $\Pb^k(\gamma)\sim_{srp} \Pb^k(\gamma')$ for all $k\in\Z$ and by so Lemma \ref{B}, $T\sim_{gs} T'$. 

Now suppose that $T\sim_{gs}T'$. Let $B<\infty$, $\tilde{T}$, $\tilde{T}'$, $\tilde{T}_i$, $\tilde{T}'_i$, and $v_i\in\Gamma$ be as in Lemma \ref{reformulation}. 

Recall that, for $S\in\Ob$, $t_*(S)=\sup\{t\le 0:S(t)=*\}$. Let's set $t_1(S):=\inf\{t>t_0(S):S(t)=*\}$. As in Lemma \ref{reformulation} we let $t_{*,j}=t_*(\Pb^j(T))$ and $t'_{*,j}=t_*(\Pb^j(T'))$; correspondingly, we set $t_{1,j}=t_1(\Pb^j(T))$ and $t'_{1,j}=t_1(\Pb^j(T'))$. For each $(k,i)\in\N^2$ let $\gamma_{(k,i)}$ and $\gamma'_{(k,i)}$ be the strands:
$$\gamma_{(k,i)}:=\pi_V\circ\tilde{\Psi}_{\beta}^k(\tilde{T}_i)$$
and 
$$\gamma'_{(k,i)}:=\pi_V\circ\tilde{\Psi}_{\beta}^k(\tilde{T}'_i).$$
Then $\gamma_{(k,i)}$ parameterizes a segment, call it $\sigma_{(k,i)}$, on $[t_{*,k-i},t_{1,k-i}]$ and $\gamma'_{(k,i)}$ parameterizes a segment $\sigma'_{(k,i)}$ on $[t'_{*,k-i},t'_{1,k-i}]$. We observe:
\begin{enumerate}
\item The pair of segments $(\sigma_{(k,i)},\sigma'_{(k,i)})$ depends, up to translation by an element of $\Gamma$, only on $k-i$.
\item $\sigma_{(k,i)},\sigma'_{(k,i)}\in\Pb^k(\Sigma)$, so if $K$ is as in Lemma \ref{can get everywhere} and $k\ge K$, then there is $n$ so that we can get from $\sigma_{(k,i)}$ to $\sigma'_{(k,i)}$ in $n$ steps.
\end{enumerate}

From (1) it follows that there are $K\le i_1<i_2<i_3\cdots$ so that the pairs $(\sigma_{(K,i_j)},\sigma'_{(K,i_j)})$ are all the same, up to translation by elements of $\Gamma$, and then, by (2), there is $n$, independent of $j$, so that we can get from $\sigma_{(K,i_j)}$ to $\sigma'_{(K,i_j)}$ in $n$ steps.

Fix $j\ge1$ and let $\sigma^0=\sigma_{(K,i_j)},\sigma^1,\ldots,\sigma_n=\sigma'_{(K,i_j)}$ be such that we can get from $\sigma^{i-1}$ to $\sigma^i$ in one step for $i=1,\ldots,n$. There are then tilings $S_i$, $i=1,\ldots,n$,  and intervals $[s_i,s_i']$, $i=0,\ldots,n$, with $[s_0,s_0']=[t_{*,K-i_j},t_{1,K-i_j}]$, and lifts $\tilde{S}_i$ so that the strand $\pi_V\circ \tilde{S}_i$ parameterizes $\sigma^{i-1}$ on $[s_{i-1},s'_{i-1}]$ and $\sigma^i$ on $[s_i,s'_i]$, $i=1,\ldots,n$.
\\
\\
{\bf Claim:} $[s_n,s'_n]=[t'_{*,K-i_j},t'_{1,K-i_j}]$.
\\
To see this, note that for any strand $\gamma$, we have: $\pi^u(\gamma(t))=\pi^u(\gamma(0))+t\omega$ for all $t\in\R$. Thus, if $\gamma$ and $\gamma'$ are strands for which $\pi^u(\gamma(t))=\pi^u(\gamma'(t))$ for some $t$ then $\pi^u(\gamma(t))=\pi^u(\gamma'(t))$ for all $t$. Hence, $\pi^u(\pi_V\circ\tilde{\Psi}_{\beta}^K(\tilde{T}_ {i_j})(t))=\pi^u(\pi_V\circ\tilde{S}_i(t))$ for all $t\in\R$ and $i=1,\ldots,n$.
We also know that $\pi_V\circ\tilde{\Psi}_{\beta}^K(\tilde{T}_{i_j})(0)-\pi_V\circ\tilde{\Psi}_{\beta}^K(\tilde{T}'_{i_j})(0)\in E^s$ (if $\gamma$ and $\gamma'$ are such that $\gamma(0)-\gamma'(0)\notin E^s$, then $|\Pb^n(\gamma)(0)-\Pb^n(\gamma')(0)|$ grows without bound as $n\to\infty$, and then so also does $|a_n(\gamma)-a_n(\gamma')|$). Thus $\pi^u(\pi_V\circ\tilde{\Psi}_{\beta}^K(\tilde{T}_ {i_j})(t))=\pi^u(\pi_V\circ\tilde{\Psi}_{\beta}^K(\tilde{T}'_ {i_j})(t))$ for all $t$.
Now $\pi_V\circ\tilde{\Psi}_{\beta}^K(\tilde{T}'_{i_j})$ and $\pi_V\circ\tilde{S}_n$ parameterize the same segment $\sigma'_{(K,i_j)}=\sigma_n$ on the intervals 
$[t'_{*,K-i_j},t'_{1,K-i_j}]$ and $[s_n,s'_n]$, resp., while $\pi^u(\pi_V\circ\tilde{\Psi}_{\beta}^K(\tilde{T}'_ {i_j})(t))=\pi^u(\pi_V\circ\tilde{S}_n)(t)$ for all $t$, so these intervals must be the same, as claimed.
\\
\\
If $r=min\{length(\tau_i):i=1,\ldots,m+p\}$, then each of the intervals $[s_i,s'_i]$ occurring above has length at least $r$. Given $R>0$, let $j$ be large enough so that $\beta^{i_j-K} r>2R$. The hypotheses of Lemma \ref{rp by steps} are then satisfied, with $\Pb^{i_j-K}(S_i)$ playing the role of $S_i$ in the lemma, and $t_i(R):=(\frac{s_{i-1}+s'_{i-1}}{2})\beta^{i_j-K}$.
\end{proof}

\begin{corollary} \label{pihat is pimax} $\hat{\pi}=\pi_{max}:\Ob \to \hat{\T}^d\simeq\hat{\T}^d_{\beta}$.
\end{corollary}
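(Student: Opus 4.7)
The approach is to deduce the identification directly from Theorem \ref{rp=gs} together with the universal property of the maximal equicontinuous factor. The idea is that $\hat{\pi}$ and $\pi_{max}$ have the same fibers (by Theorem \ref{rp=gs}) and map to spaces that are canonically isomorphic as Kronecker $\R$-systems, so the two factor maps must coincide after the natural identification of their targets.

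First I would check that $\hat{\pi}:(\Ob,\R)\to(\hat{\T}^d,\R)$ is an equicontinuous factor of $(\Ob,\R)$. Continuity and surjectivity come from Lemma \ref{factor}, equivariance from Lemma \ref{preserves action}, and equicontinuity of the Kronecker action on the compact abelian group $\hat{\T}^d=\inv F_L$ is standard (translations on a compact metrizable abelian group are always equicontinuous). By the universal property of $\pi_{max}$, there is then a unique continuous equivariant surjection $q:\hat{\T}^d_\beta\to\hat{\T}^d$ with $\hat{\pi}=q\circ\pi_{max}$.

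Next I would show that $q$ is a bijection. Surjectivity is inherited from $\hat{\pi}$. For injectivity, one has the chain of equivalences
\[
q(\pi_{max}(T))=q(\pi_{max}(T'))\iff\hat{\pi}(T)=\hat{\pi}(T')\iff T\sim_{gs}T'\iff T\sim_{srp}T'\iff \pi_{max}(T)=\pi_{max}(T'),
\]
where the second equivalence is the lemma immediately preceding Theorem \ref{rp=gs}, the third is Theorem \ref{rp=gs} itself, and the last is the characterization of the structure relation of $\pi_{max}$ recalled in Section \ref{background}. Since $q$ is a continuous bijection from the compact Hausdorff space $\hat{\T}^d_\beta$ to the Hausdorff space $\hat{\T}^d$, it is a homeomorphism, and being $\R$-equivariant it is an isomorphism of $\R$-dynamical systems.

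Finally, the isomorphism $\hat{\T}^d\simeq\hat{\T}^d_\beta$ in the statement may be taken to be $q$ itself; equivalently, it is the natural isomorphism arising from the fact that both $L=A|_V$ and the companion matrix $M$ have the same irreducible characteristic polynomial $p_\beta$ of degree $d$, and are therefore rationally similar, inducing a canonical isomorphism $\inv F_L\simeq\inv F_M$ of solenoids that intertwines the Kronecker $\R$-actions. Under this identification $\hat{\pi}=\pi_{max}$, which is the content of the corollary. The argument is entirely formal once Theorem \ref{rp=gs} is available; that theorem was the only substantive step, and nothing here presents an additional obstacle.
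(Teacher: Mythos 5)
Your argument is correct and is essentially the paper's own (implicit) argument: the corollary is stated without proof as an immediate consequence of Theorem \ref{rp=gs} together with the preceding lemma identifying the fibers of $\hat{\pi}$ with global shadowing classes, and your universal-property formalization with the map $q$ is the natural way to fill in the details. One minor caveat: rational similarity of $L$ and $M$ alone does not canonically identify $V/\Gamma$ with $\T^d$ (one needs the conjugating map to carry $\Gamma$ onto $\Z^d$, which the paper later secures by exhibiting the basis $e,Le,\ldots,L^{d-1}e$ of $\Gamma$), but your primary identification --- taking the isomorphism $\hat{\T}^d\simeq\hat{\T}^d_{\beta}$ to be $q$ itself --- is all that is needed and is unobjectionable.
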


To connect the foregoing with arithetical coding, let $\Pi:X_{\beta}^+\to I$ by $\Pi((x_i)):=\sum_{i=1}^{\infty}x_i\beta^{-i}$. Then $\Pi\circ \sigma=T_{\beta}\circ \Pi$, $\Pi$ is a.e. one-to-one, and so is $\hat{\Pi}:x_{\beta}\to \inv T_{\beta}$. (The unique measure $\eta$ of maximal entropy for $T_{\beta}$ is equivalent to Lebesgue measure (\cite{Hof1, Hof2}), for $\inv T_{\beta}$ we use the measure induced by $\eta$.) The map from $\Ob$ to $[0,1)$ given by $T\mapsto-t_*(T)$ is surjective and  from $-t_*(\Pb(T))=T_{\beta}(-t_*(T))$, there is an induced map $\widehat{-t_*}:\Ob\to\inv T_{\beta}$. The image of $\widehat{-t_*}$ is $\inv T_{\beta}|_{[0,1)}$, which is a full measure subset of $\inv T_{\beta}$. For $T\in\Ob$ and $i\in\Z$, let $x_i(T):=\lfloor-\beta t_*(\Pb^i(T))\rfloor$. Then  $(x_i(T))\in X_{\beta}$ and $\widehat{-t_*}(T)=\hat{\Pi}((x_i(T))$. It follows that $\{x_i(T)):T\in\Ob\}$ has full measure in $X_{\beta}$.

Recall that $\omega$ is the positive right eigenvector of $L$, normalized with $\langle l^t,\omega\rangle=1$, $l=(l_1,\ldots,l_{m+p})$ being the left eigenvector of the substitution matrix $A$ with $l_i=length(\tau_i)$ (note that $\sum_{i=1}^{m+p}l_i=1$). Then $\pi^u(\pi_V(e_i))=l_i\omega$ for each $i\in\{1,\ldots,m+p\}$ and since $\omega$ is `totally irrational' in $V$, $\pi^u:\Gamma\to E^u$ is one-to-one and we have $\Gamma^u:=\pi^u(\Gamma)=\langle l_1\omega,\ldots,l_{m+p}\omega\rangle_{\Z}=\langle l_1,\ldots,l_{m+p}\rangle_{\Z}\omega$. We claim that $\langle l_1,\ldots,l_{m+p}\rangle_{\Z}=\Z[\beta]$. To see this, recall the notation $z^j= T_{\beta}^{j-1}(1)$. Clearly, $z^j\in\Z[\beta]$. Since the endpoints of the $\tau_i$ are contained in $\{0\}\cup\{z^j:j=1,\ldots,m+p\}$, we have $\langle l_1,\ldots,l_{m+p}\rangle_{\Z}\subset \Z[\beta]$. On the other hand, for each $j=1,\ldots,d\le m+p$, there is $i\in \{1,\ldots,m+p\}$ with $max(\tau_i)=z^j$. We have $z^1=1$ and , for $j\ge 2$, $z^j=\beta^{j-1}+q_{j-2}(\beta)$, where $q_{j-2}(x)\in\Z[x]$ has degree $j-2$. Then $\beta^{j}=z^{j+1}-q_{j-1}(\beta)=l_1+l_2+\cdots+l_{j+1}-q_{j-1}(\beta)\in \langle l_1,\ldots,l_{m+p}\rangle_{\Z}$  for all $j$, inductively. We thus have $\Gamma^u=\Z[\beta]\omega$.

Now let $e:=-\pi_V(\sum_{i=1}^{m+p}e_i)\in\Gamma$. From $\pi^u\circ L=\beta\pi^u$ and the above claim, we see that $e,Le,\ldots,L^{d-1}e$ generates $\Gamma$, that is, $\bar{y}:=y+\Gamma$, $y:=e^u$, is a fundamental homoclinic point for $F_L:V/\Gamma\to V/\Gamma$.

For each $i\in\Z$ and $T\in\Ob$ let $p_i(T):=p(\Pb^i(T))=\gamma(t_*(\Pb^i(T)))-\gamma(\beta t_*(\Pb^{i-1}(T))$, $\gamma$ any strand corresponding to $\Pb^i(T)$, be the abelian prefix of $\Pb^i(T)$ and let $a_i:=a_i(\gamma_T)$. Then $(a_i)$ is bounded and $a_i=p_i+La_{i-1}$ for all $i\in\Z$. We have $a_0=p_0+La_{-1}=p_0+Lp_{-1}+L^2a_{-2}=\cdots$. Since the $a_i$ are bounded, $$a_0^s=\sum_{i=0}^{\infty}L^ip^s_{-i},$$ Similarly, $$a^u_0=-\sum_{i=1}^{\infty}L^{-i}p^u_i.$$
 Since $p_i(T)\in\Gamma$, $L^ip^s_{-i}\equiv -L^ip^u_{-i} \mod( \Gamma)$ for $i\ge 0$, and we have $$a_0=a_0(T)\equiv \sum_{i=-\infty}^{\infty}L^i(-p^u_{-i}(T)),\,\mod( \Gamma).$$
 Thus, since $p^u_i(T)=x_i(T)\omega$, $$\pi(T)=a_0(\gamma_T)+\Gamma=-\sum_{i=-\infty}^{\infty}(x_i(T)\beta^{-i}\omega+\Gamma).$$
 
 On the other hand, with fundamental homoclinic point $\bar{y}=e^u=-\omega+\Gamma$ (and $\hat{\bar{y}}=(-\omega+\Gamma,-\beta^{-1}\omega+\Gamma,\ldots)$), we have $h_{\bar{y}}((x_i(T))=\sum_{i=-\infty}^{\infty}x_i(T)\hat{F}_L^{-i}(-\omega+\Gamma, -1/\beta\omega+\Gamma, \ldots)=-(\sum_{-\infty}^{\infty}(x_i(T)\beta^{-i}\omega+\Gamma),\sum_{i=-\infty}^{\infty}(\beta^{-i-1}\omega+\Gamma),\ldots)$. Thus we see that $$\hat{\pi}(T)=h_{\bar{y}}((x_i(T)))$$
 for all $T\in\Ob$.

From Theorem \ref{main theorem} and Corollary \ref{pihat is pimax}, $h_{\bar{y}}$, restricted to $\{(x_i(T)):T\in\Ob\}$ is a.e one-to-one. Since $\{(x_i(T)):T\in\Ob\}$ has full measure in $X_{\beta}$, we have proved Corollary \ref{coding}.

We finish with an argument that a stronger formulation than Property (W) is equivalent to pure discrete spectrum of $(\Ob,\R)$. For $T\in\Ob$ we write $W^s(T)$ for $\{T'\in\Ob:d(\Pb^n(T'),\Pb(T))\to 0 \text{ as }n\to\infty\}$ and for $\hat{\bar{z}}\in\hat{\T}^d_{\beta}$, $W^s(\hat{\bar{z}}):=\{\hat{\bar{z}}':d(\hat{F}^n_M(\hat{\bar{z}}'),\hat{F}^n_M(\hat{\bar{z}}))\to0 \text{ as }n\to\infty\}$. (Here $\hat{F}_M:\hat{\T}^d_{\beta}\to\hat{\T}^d_{\beta}$ is the hyperbolic automorphism induced on the maximal equicontinuous factor of $\Ob$.)
\begin{lemma}  \label{dense} If $(\Ob,\R)$ has pure discrete spectrum and $T,T'\in\Ob$ are such that $T\sim_s T'$, then $\{t:T-t\sim_s T'-t\}$ is open and dense in $\R$
\end{lemma}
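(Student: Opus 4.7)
The openness of $U:=\{t\in\R : T-t\sim_s T'-t\}$ is immediate: just before Lemma~\ref{stably equiv 1} the excerpt records that any stably equivalent pair remains stably equivalent after translation by every $|u|<\epsilon$ for some $\epsilon>0$. Applied at each $t\in U$, this supplies a neighborhood of $t$ inside $U$.

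For density my plan is to reduce to Theorem~\ref{pds means stab equiv}. That theorem, under the hypothesis of pure discrete spectrum, asserts $\approx_s\,=\,\sim_{srp}$; since $\approx_s$ is by definition ``$\sim_{srp}$ together with $\sim_s$ on a dense set of translates'', the lemma will follow as soon as I establish the implication
\[
T\sim_s T' \;\Longrightarrow\; T\sim_{srp} T'.
\]
To prove this I would use the equivalent formulation of $\sim_s$ recalled in the proof of Lemma~\ref{stably equiv 1} (from \cite{BO}): there exists $k_0\in\N$ with $B_0[\Pb^{k_0}(T)]=B_0[\Pb^{k_0}(T')]$, hence a single tile $\tau$ covers the origin in both $\Pb^{k_0}(T)$ and $\Pb^{k_0}(T')$. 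Iterating, the patch $\Pb^n(\tau)$ sits inside $\Pb^{k_0+n}(T)\cap \Pb^{k_0+n}(T')$ and its support is an interval of length $\beta^n\cdot length(\tau)$. Given $R>0$, choose $n$ so large that $spt(\Pb^n(\tau))$ contains a closed $R$-ball about some $t_R$; then $S_R:=\Pb^{k_0+n}(T)$, $S'_R:=\Pb^{k_0+n}(T')$, together with this $t_R$, fulfill the definition of strong regional proximality between $\Pb^{k_0+n}(T)$ and $\Pb^{k_0+n}(T')$. Since $\sim_{srp}$ is preserved by $\Pb^{\pm 1}$ (noted in the proof of Lemma~\ref{stably equiv 1}), this descends to $T\sim_{srp} T'$.

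The only nontrivial content is isolating the implication $T\sim_s T'\Rightarrow T\sim_{srp} T'$; once set up via the inflation-and-translate argument above, it is routine, and the remainder is a direct appeal to Theorem~\ref{pds means stab equiv}. No ingredients beyond results cited earlier in the paper are required, and the main interpretive work lies in matching what the inflated patch provides to the precise definition of $\sim_{srp}$.
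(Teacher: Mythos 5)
Your treatment of openness is fine and agrees with the paper. The density argument, however, rests entirely on the claimed implication $T\sim_s T'\implies T\sim_{srp}T'$, and this implication is false. First, the argument you give for it does not prove what you need: for each $R$ you verify the defining condition of $\sim_{srp}$ at the single scale $R$ for the pair $(\Pb^{k_0+n}(T),\Pb^{k_0+n}(T'))$ with $n=n(R)$; since the pair changes as $R$ grows, you never exhibit one fixed pair of tilings satisfying the condition for \emph{all} $R$, so there is nothing for the $\Pb^{\pm1}$-invariance of $\sim_{srp}$ to ``descend''. Second, no repair is possible, because the implication fails outright: the set of tilings $S$ with $B_0[S]=B_0[T]$ is an uncountable (Cantor) subset of $\Ob$, while the fibers of $\pi_{max}$ are finite, so this set contains some $T'\ne T$ with $\pi_{max}(T')\ne\pi_{max}(T)$. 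Such a pair satisfies $T\sim_s T'$ (take $k=0$ in the criterion from \cite{BO}) but not $T\sim_{srp}T'$. This already occurs for substitutions with pure discrete spectrum (e.g.\ Fibonacci), so the hypothesis of the lemma does not rescue you. Sharing a tile at the origin controls $d(\Pb^k(T),\Pb^k(T'))$ as $k\to+\infty$ but says nothing about the $\pi_{max}$-images of $T$ and $T'$ themselves.

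The paper's proof instead argues by contradiction, and strong regional proximality legitimately enters only at a limit point, not at $T,T'$. If $T-t\nsim_s T'-t$ for all $t$ in an interval $J=(t_0-\epsilon,t_0+\epsilon)$, extract $k_i\to\infty$ with $\Pb^{k_i}(T-t_0)\to S$ and $\Pb^{k_i}(T'-t_0)\to S'$. Since $T\sim_s T'$ forces $d(\pi_{max}(\Pb^k(T-t)),\pi_{max}(\Pb^k(T'-t)))\to0$ uniformly in $t$, continuity of $\pi_{max}$ gives $\pi_{max}(S)=\pi_{max}(S')$, i.e.\ $S\sim_{srp}S'$; here it is the convergence to zero \emph{along the orbit} that produces equal $\pi_{max}$-images. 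The assumed failure of $\sim_s$ throughout $J$ then forces $S\cap S'=\emptyset$, so the coincidence rank of $\pb$ is at least $2$, contradicting pure discrete spectrum. You should redo the density step along these lines; the reduction through Theorem \ref{pds means stab equiv} cannot be made to work as you set it up.
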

\begin{proof} We have previously observed that $T-t\sim_sT'-t$ is an open property. If $T\sim_s T'$ then $d(\pi_{max}(\Pb^k(T-t)),\pi_{max}(\Pb^k(T'-t)))=d(\hat{F}_{M}^k((\pi_{max}(T))-\beta^kt,\hat{F}^k_{M}(\pi_{max}(T'))-\beta^kt)\to 0$ as $k\to\infty$ uniformly in $t$. Suppose there is an interval $J=(t_0-\epsilon, t_0+\epsilon)$ with $T-t\nsim_s T'-t$ for all $t\in J$. Take $k_i\to\infty$ so that $\Pb^{k_i}(T-t_0)\to S\in\Ob$ and $\Pb^{k_i}(T'-t_0)\to S'\in\Ob$. Then $S\sim_{srp} S'$ (since $d(\pi_{max}(S),\pi_{max}(S'))=0$) and $S\cap S'=\emptyset$ (if $B_0[S-t]=B_0[S'-t]$ for some $t$ then $B_0[\Pb^{k_i}(T-(t_0-t/\beta^{k_i}))]=B_0[\Pb^{k_i}(T'-(t_0-t/\beta^{k_i}))]$ for all large $i$, contradicting $T-t'\nsim_s T'-t'$ with $t'=t_0-t/\beta^{k_i}\in J$). But then the coincidence rank of $\pb$ is at least two, so $(\Ob,\R)$ does not have pure discrete spectrum.
\end{proof}
 
A few observations:

\begin{enumerate}
\item For any $T,T'\in\Ob$, $\{t:T'-t\in W^s(T)\}$ is dense in $\R$. (This is a consequence of the primitivity of $\pb$.)
\item $d(t\beta^k\omega,\Gamma)\to 0$ as $k\to\infty$ if and only if $t\in\Z[1/\beta]$. Hence, if $\hat{\bar{0}}=(\bar{0},\bar{0},\ldots)$,  $\hat{\bar{0}}-t\in W^s( \hat{\bar{0}})$ if and only if $t\in\Z[1/\beta]$.
\item $-t_*(\Pb(T))=T_{\beta}(-t_*(T))$ for all $T\in\Ob$. 
\item Since $\pi_{max}^{-1}(\hat{\bar{0}})$ is finite (\cite{BBK}, or \cite{BK}) and $\Pb$-invariant there is $N>0$ so that $\Pb^N(T)=T$ for all $T\in\pi_{max}^{-1}(\hat{\bar{0}})$.
\item Given $T,T'\in\Ob$ with $T'\in W^s(T)$, there is $\epsilon>0$ so that  $T'-t\in W^s(T-t)$ for all $|t|<\epsilon$. (This is a consequence of `local product structure' - see \cite{AP}. Or, invoke the openness of the relation $\sim_s$, which is proved in \cite{BO}.)
\end{enumerate}

\begin{prop} \label{Fin}If $\beta$ is Pisot, then $(\Ob,\R)$ has pure discrete spectrum if and only if for all $t\in\Z[1/\beta]\cap\R^+$ the set $\{t'\in Fin(\beta):t+t'\in Fin(\beta)\}$ is dense in $\R^+$.
\end{prop}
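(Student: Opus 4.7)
The plan is to interpret $Fin(\beta)$ in tiling-space terms via the commutation $T_\beta\circ g_I=g_I\circ\Pb$. Fix a $\Pb^N$-periodic tiling $T_0\in\Ob$ with $t_*(T_0)=0$ (take $T_0=T_p^0$ when $m>0$, or the $\Pb$-fixed tiling arising when $m=0$), and set $\hat z_0:=\hat\pi(T_0)$, an $\hat F_M^N$-periodic point of $\hat\T^d_{\beta}$. Let $E:=\{T\in\Ob:t_*(T)=0\}$. For $s>0$ smaller than the length of $\tau_1$ one has $-t_*(T_0-s)=s$, and the forward sequence $(x_i(T_0-s))_{i\ge 1}$ is exactly the greedy $\beta$-expansion of $s$; hence $s\in Fin(\beta)$ if and only if $\Pb^k(T_0-s)\in E$ for some $k\ge 0$. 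The invariance $\beta\cdot Fin(\beta)=Fin(\beta)$ extends this characterization after rescaling to all $s>0$, so the proposition reduces to density in $\R^+$ of
$$\{s>0:\Pb^k(T_0-s),\,\Pb^{k'}(T_0-s-t)\in E\text{ for some }k,k'\ge 0\}.$$

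For the forward direction ($\Rightarrow$) I assume pure discrete spectrum (Theorem \ref{main theorem}). Given $t\in\Z[1/\beta]\cap\R^+$, observation~(2) preceding the statement yields $\hat\pi(T_0-t)=\hat z_0-t\omega\in W^s(\hat z_0)$ for $\hat F_M$, so via Corollary \ref{pihat is pimax} the iterates $\hat\pi(\Pb^{Nk}(T_0-t))$ tend to $\hat z_0$ as $k\to\infty$. The Delone set $L:=\{s\ge 0:\tau_1+s\in T_0\}$ satisfies $\beta^N L\subset L$, so $A:=\bigcup_{k\ge 0}\beta^{-Nk}L$ is dense in $\R^+$ even without PDS. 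The density of $A\cap(A-t)$---which is exactly what the reformulation above requires---is where PDS enters: combining a.e.\ one-to-oneness of $\hat\pi$ with Lemma \ref{dense} and the openness of $\sim_s$ from observation~(5), the $W^s$-asymptoticity above produces a cofinal sequence of $k$ along which many $\ell\in L$ satisfy $\ell+\beta^{Nk}t\in L$, with $\beta^{-Nk}\ell$ approximating any prescribed $s_0>0$.

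For the reverse direction ($\Leftarrow$) assume density for every $t\in\Z[1/\beta]\cap\R^+$. If PDS failed, the coincidence rank would exceed $1$, producing $T_1\ne T_2\in\pi_{max}^{-1}(\hat z_0)$ with $T_1\cap T_2=\emptyset$ and both $\Pb^N$-periodic, together with an open interval $J\subset\R$ on which $T_1-s\not\sim_sT_2-s$ (Theorem \ref{pds means stab equiv}). The offset $t$ between the nearest-$\tau_1$ positions to the origin in $T_1$ and $T_2$ lies in $\Z[\beta]\cap\R^+\subset\Z[1/\beta]\cap\R^+$, since every tile length in $\Ob$ belongs to the module $\langle l_1,\ldots,l_{m+p}\rangle_\Z=\Z[\beta]$ computed in Section \ref{connections}. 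For this $t$, density provides $s\in J$ with $s,\,s+t\in Fin(\beta)$; the $E$-characterization forces both $\Pb^k(T_1-s)$ and $\Pb^k(T_2-s)$ to carry $\tau_1$ at the origin for a common large $k$, and Properties~1--3 of $\mathcal{L}(\pb)$ combined with Proposition \ref{not asymptotic} then pin down the preceding tile so that $B_0[\Pb^k(T_1-s)]=B_0[\Pb^k(T_2-s)]$, forcing $T_1-s\sim_sT_2-s$ and contradicting $s\in J$.

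The main obstacle is the density (not merely nonemptiness, which would recover only classical Property~(W)) in the forward direction: upgrading the single solenoidal asymptoticity point $\hat z_0-t\omega\in W^s(\hat z_0)$ into a genuinely dense family of simultaneous homoclinic return times at the tiling level. This is the strengthening flagged in the final parenthetical of Section \ref{coding and W}, and hinges on combining the $\Pb^N$-self-similarity of $L$ with the openness of $\sim_s$ recorded in item~(4) of the introduction.
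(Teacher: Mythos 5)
Your translation of $Fin(\beta)$ into tiling-space terms ($s\in Fin(\beta)$ iff some $\Pb^k(T_0-s)$ carries $\tau_1$ at the origin) is the same reformulation the paper uses, and your overall strategy is the paper's. But the proposal has a genuine gap exactly where you flag it: the density of $\{t'\in Fin(\beta):t+t'\in Fin(\beta)\}$ in the forward direction is asserted to follow from ``combining a.e.\ one-to-oneness of $\hat\pi$ with Lemma \ref{dense} and the openness of $\sim_s$,'' but no argument is given for how a single solenoidal asymptoticity $\hat{\bar 0}-t\omega\in W^s(\hat{\bar 0})$ produces a \emph{dense} set of simultaneous returns. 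The paper closes this with a short, concrete intersection argument that you should supply: choose $T\in\pi_{max}^{-1}(\hat{\bar 0})$ with $T_0^i-t\in W^s(T)$; then $C:=\{t'>0:T-t',\,T_0^i-t'\in W^s(T_0^i)\}$ is dense because $\{t':T_0^i-t'\in W^s(T_0^i)\}$ is dense (primitivity, observation (1)) and, under pure discrete spectrum, $T_0^i\approx_s T$ so $\{t':T_0^i-t'\sim_s T-t'\}$ is \emph{open} and dense, and a dense set meets an open dense set in a dense set; every $t'\in C$ lies in $Fin(\beta)$ by the $t_*$/$T_\beta$ computation; finally $D:=\{t':T_0^i-t-t'\sim_s T-t'\}$ is open and dense by Lemma \ref{dense} applied to $T_0^i-t\sim_s T$, and $t'\in C\cap D$ gives $T_0^i-(t+t')\sim_s T-t'\sim_s T_0^i$, hence $t+t'\in Fin(\beta)$. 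Without some such argument your forward direction proves only nonemptiness, i.e.\ classical Property (W), not the stated density.

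There is a secondary problem in your reverse direction. You take arbitrary disjoint $T_1\ne T_2$ in the fiber over $\hat z_0$ and claim the offset between their nearest $\tau_1$-positions lies in $\Z[\beta]\subset\Z[1/\beta]$ because tile lengths generate $\Z[\beta]$; that reasoning controls differences of vertices \emph{within one tiling}, not the relative displacement of vertices of two different tilings, so the hypothesis cannot be applied to that $t$ without further justification. You also need, but do not establish, that the disjoint pair can be chosen with $T_1-s\not\sim_s T_2-s$ on a whole interval. The paper sidesteps both issues by arguing with the explicit periodic tilings $T_j^0$ and an arbitrary $T\in\pi_{max}^{-1}(\hat{\bar 0})$: any $t$ with $T_j^0-t\sim_s T$ automatically lies in $\Z[1/\beta]$ by observation (2), the hypothesis then forces $W(T):=\{t'>0:T_j^0-t'\sim_s T-t'\}$ to be open and dense (using Lemma \ref{stably equiv 2} to identify the various $T_0^k$ stably), and $W(T)\cap W(T')\ne\emptyset$ yields $T\cap T'\ne\emptyset$ for all fiber elements, i.e.\ coincidence rank $1$. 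I would rework your reverse direction along those lines rather than through a chosen disjoint pair.
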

\begin{proof}
Suppose that $(\Ob,\R)$ has pure discrete spectrum and pick $t\in\Z[1/\beta]\cap\R^+$. For given $i\in\{1,\ldots,p\}$, there is $T\in\pi_{max}^{-1}(\hat{\bar{0}})$ so that $T_0^i-t\in W^s(T)$ (this follows from item (2)). Now, $\{t'>0:T_0^i-t'\in W^s(T_0^i)\}$ is dense in $\R^+$ (from (1)) and $\{t'\in\R:T_0^i-t'\sim_s T-t'\}$ is open and dense in $\R$ (Theorem \ref{pds means stab equiv}). Then $C:=\{t'>0:T-t',T_0^i-t'\in W^s(T_0^i)\}$ is dense it $\R^+$ and all these $t'$ are also in $Fin(\beta)$. Indeed, if $t\ge0$ and $T_0^i-t'\in W^s(T_0^i)$, let $k$ be large enough so that $t'/\beta^{kp}<1$. Then $T_{\beta}^{kp+n}(t'/\beta^{kp})=T_{\beta}^{kp+n}(-t_*(T_0^i-t'/\beta^{kp}))=-t_*(\Pb^n(T_0^i-t'))=0$ for $n$ large enough that $\tau_1\in \Pb^n(T_0^i-t')$, so $t'/\beta^{kp}$, and hence $t'$, is in $Fin(\beta)$. We have $T_0^i-t\sim_s T$, so by Lemma \ref{dense}, the set $D:=\{t':T_0^i-t-t'\sim_s T-t'\}$ is open and dense in $\R$. Now $C\cap D$ is dense in $\R^+$ and for $t'\in C\cap D$ we have $T_0^i-t-t'\sim_sT-t'\sim_sT_0^i$, so $t+t'\in Fin(\beta)$ (as above). Thus if $t'\in C\cap D$ then $t'$ and $t+t'$ are both in $Fin(\beta)$.

Now suppose that $F(t):=\{t'\in Fin(\beta):t+t'\in Fin(\beta)\}$ is dense in $\R^+$ for all $t\in\Z[1/\beta]\cap\R^+$ and let $T\in \pi_{max}^{-1}(\hat{\bar{0}})$. Fix $i\in\{1,\ldots,p\}$ and let $t\in\R^+$ be so that $T_0^i-t\sim_s T$. Then $t\in\Z[1/\beta]$. Let $\epsilon>0$ be small enough so that if $|t'|<\epsilon$, then $T_0^i-t-t'\sim_s T-t'$. Note that if $t'\in Fin(\beta)$, then $t'/\beta^{kp}\in Fin(\beta)$ and we have  $0=T_{\beta}^{n+kp}(t'/\beta^{kp})=T_{\beta}^{n+kp}(-t^*(T_0^i-t'/\beta^{kp} ))=-t^*(\Pb^n(T_0^i-t'))$ for large $n$ and $k$ big enough so that $t'/\beta^{kp}\in [0,1)$. Thus, $T_0^i-t'\in W^s(T_0^j)$ for some $j\in\{1,\ldots,p\}$. Then for $t'\in F(t)\cap [0,\epsilon)$ we have $T_0^i-t'\sim_s T_0^j$, $T_0^i-(t+t')\sim_s T-t'$ and $T_0^i-(t+t')\sim_s T_0^k$ for some $j,k\in\{1,\ldots,p\}$. Since $T_0^j-t''\sim_sT_0^k-t''$ for all $t''>0$, we see that $T_0^i\sim_s T$ densely on $[0,\epsilon)$. Using the $\Pb$-periodicity of $T_0^i$ and $T$, this means that $W(T):=\{t'>0:T_0^i-t'\sim T-t'\}$ is open and dense in $\R^+$. Thus, for each $T,T'\in \pi_{max}^{-1}(\hat{\bar{0}})$, $W(T)\cap W(T')\ne 0$, and then $T\cap T'\ne \emptyset$. Thus the coincidence rank of $\pb$ is 1 and $(\Ob,\R)$ has pure discrete spectrum.

\end{proof}

It follows from Theorem \ref{main theorem} and Proposition \ref{Fin} that all Pisot $\beta$ are weakly finitary, proving Corollary \ref{weak}.

\end{document}